\documentclass[twoside,11pt]{article}
\usepackage{graphicx}
\usepackage{subcaption}
\usepackage{blindtext}
\usepackage{jmlr2e}
\usepackage{amsmath, amssymb, amsfonts}
\usepackage{bm}
\usepackage{algorithm}
\usepackage{algpseudocode}%
\usepackage{mathrsfs}
\usepackage{mathtools}
\usepackage{enumitem}
\usepackage{multirow}
\usepackage{booktabs}
\usepackage{tikz}
\usepackage{tikz-cd}
\usepackage{float}
\usepackage{lastpage}
\usepackage{placeins}
\usepackage{placeins}   
\usepackage{booktabs}   
\usepackage{multirow}   
\usepackage{siunitx}    
\usetikzlibrary{arrows.meta, positioning}

\newcommand{\R}{\mathbb{R}}
\newcommand{\E}{\mathbb{E}}
\newcommand{\N}{\mathcal{N}}

\newcommand{\Pm}{P_m}
\newcommand{\Pmp }{P_m^\perp}
\newcommand{\Qm}{Q_m}
\newcommand{\Tm}{T_m}

\newcommand{\norm}[1]{\left\|#1\right\|}

\newcommand{\tr}{\mathrm{tr}}

\firstpageno{1}

\begin{document}

\title{Stochastic Lanczos Quadrature for Computational Uncertainty in Linear Algebra}

\author{%
\name Hassan Fifen \email hfifen@aimsric.org \\
\addr Department of Mathematics and Statistics\\
University of Dodoma, Dodoma, Tanzania\\
African Institute for Mathematical Sciences Research and Innovation Centre\\
Kicukiro, Kigali, Rwanda
\AND
\name Issa Karambal \email ikarambal@aimsric.org \\
\addr African Institute for Mathematical Sciences Research and Innovation Centre\\
Kicukiro, Kigali, Rwanda
\AND
\name Philipp Hennig \email philipp.hennig@uni-tuebingen.de \\
\addr Department of Computer Science\\
University of T\"ubingen, T\"ubingen, Germany\\
T\"ubingen AI Center, T\"ubingen, Germany
}

\editor{}

\maketitle

\begin{abstract}
Applying a matrix function to a vector is a common operation in large-scale scientific computing, such as  Bayesian inference. When the matrix is symmetric positive definite and the function is its inverse, the Lanczos algorithm gives a matrix-free approximation from matrix-vector products alone. Stopping after a fixed number of steps keeps the approximation on the explored Krylov subspace but assigns nothing to its orthogonal complement, which holds the smallest eigenvalues and therefore the largest contributions to the inverse. This truncation is usually treated as a silent numerical error rather than as measurable uncertainty. We show that a single Lanczos run yields both a matrix-free approximation of the inverse and a Gaussian distribution over the exact result, whose covariance measures what the truncation discarded and concentrates as the number of steps grows. The covariance restores the complement in two parts: a rank-one coupling at the truncation boundary, given exactly by the Lanczos residual, and an isotropic term on the remaining bulk, whose size we estimate by Projected Stochastic Lanczos Quadrature, reusing one Krylov basis across all probes. We prove consistency, with bias decaying exponentially in the quadrature depth and variance at the Monte Carlo rate.
\end{abstract}

\begin{keywords}
Krylov subspace methods, Lanczos algorithm, Stochastic trace estimation, Gauss quadrature, Computational uncertainty.
\end{keywords}

\section{Introduction}

Many problems in machine learning and scientific computing rely on a single operation, such as applying the inverse of a large symmetric positive-definite matrix to a vector, or using it as the covariance of a Gaussian distribution.  This is the case for posterior sampling in Bayesian neural networks \citep{mackay1992practical, daxberger2021laplace}, for sampling from Gaussian process priors \citep{rasmussen2006}, and for whitening in second-order optimization \citep{martens2015optimizing}. In large-scale models, operations such as inverting or factoring the corresponding matrix are extremely expensive. A  free matrix-vector product algorithm can be a solution for this problem and  Lanczos algorithm \citep{lanczos1950, saad2003iterative} is an example of such  free matrix-vector product. 

For a symmetric positive definite matrix $A$, starting from a given vector $b$ and running for $m$ steps, it builds an
orthonormal basis for the Krylov subspace spanned by $b, Ab, \ldots, A^{m-1}b$, and approximates an analytic function on $A$ inside that subspace using only matrix-vector products. In Bayesian deep learning this makes posterior sampling under the Laplace approximation
\citep{daxberger2021laplace} scalable, and it avoids the structural assumptions of diagonal \citep{mackay1992practical}, low-rank \citep{ritter2018}, or Kronecker-factored  \citep{martens2015optimizing} approximations, all of which lose  part of the curvature and leave the posterior underestimated in the directions where they are neglected.

But the Lanczos approach, used as a low-rank approximation, has a limitation that has not yet been  addressed.
Indeed, when we stop the iteration  after $m$ steps, the approximation lives entirely inside the resolved Krylov subspace and assigns exactly nothing to its orthogonal complement. That complement is $(d-m)$-dimensional, and it can retain  a large portion  of the inverse's spectral mass, because it contains the smallest eigenvalues of $A$, which become the largest under inversion. This problem is very critical  when $A$ is ill-conditioned, as deep network Hessians  are \citep{ghorbani2019investigation, papyan2020traces, sagun2017empirical}. The samples we draw are then too  concentrated; the sampler is confident about directions it has  never explored. In practice, this truncation error is simply ignored. It is not measured, and it is not taken into account  in the calculations. 

Probabilistic numerics \citep{hennig2022probabilistic, cockayne2019bayesian} gives us the right way to think about this. Instead of treating the result of a truncated iteration as a point with some unknown error, we treat the part of the computation we did not finish as a source of uncertainty, and we describe
it as a probability distribution. This idea has been developed for linear solvers in BayesCG \citep{cockayne2019bayesiancg} and for Gaussian process inference in computation-aware Gaussian processes \citep{wenger2022posterior}, both of which quantify the computational uncertainty of the linear solve $A^{-1}b$. We build on this  work and extend it to the matrix function $ A^{-1} $. The matrix function is a different case  because it cannot be written as a finite sum of solver iterates, and its complement carries a coupling at the truncation boundary that does not appear in the solve. The closest related correction is that of \citet{filippone2015enabling}, who use an unbiased Russian-roulette scheme for individual solves, and its variance grows as the stopping threshold is lowered.

We address this limitation directly by showing that the truncation error of the $m$-step Lanczos approximation to $A^{-1}$ lives in the orthogonal complement of the Krylov subspace, and that this missing uncertainty splits  into two parts. When we write $A$ in the Krylov basis, the resolved and unresolved subspaces  interact only through a single rank-one term, the Lanczos residual $\beta_m$. Because of this, the true complement block of the inverse is not the compressed operator $T_{m,\perp}^{-1}$, but its rank-one Schur correction $S_\perp^{-1}$, the compressed operator is just the special case where the coupling is ignored ($\beta_m = 0$). We keep this boundary coupling exactly, at matrix-free cost, and we approximate the remaining coupling-free bulk of the complement with a single isotropic term. To find the magnitude of that term, we introduce Projected Stochastic Lanczos Quadrature (P-SLQ), which performs short Lanczos runs from random probe vectors re-projected into the complement and applies Gauss quadrature to the resulting tridiagonal matrices. From a single Lanczos run, we then get two things at once: a matrix-free approximation of $A^{-1}$, and a Gaussian belief over the true $A^{-1}b$ whose covariance separates the variance the computation has resolved from the variance still left in the complement, and which shrinks to the exact answer as the resolved subspace grows.

\paragraph{Contributions.} Our contributions are the following.

\begin{enumerate}
\item \textbf{Computational uncertainty for the inverse.}
We show that the variance ignored by the $m$-step Lanczos approximation to $A^{-1}b$ lives entirely in the unresolved subspace, and we model it in the probabilistic-numerics sense with respect to  the true complement block of $A^{-1}$, rather than its truncated version. Writing $A$ in the Krylov basis shows that the resolved and unresolved subspaces meet only on the rank-one Lanczos residual $\beta_m$ (Lemma~\ref{lem:rankone_coupling}), so the true complement block is the Schur complement $S_{\perp}^{-1}$, with $S_{\perp}=T_{m,\perp}-\gamma\,e_1e_1^{\top}$ and $\gamma=\beta_m^2(T_m^{-1})_{mm}$, the compressed operator $T_{m,\perp}^{-1}$ is the decoupled case $\beta_m=0$. The corrected covariance
\[
\Sigma_m
=Q_mT_m^{-1}Q_m^{\top}
+\bigl(S_{\perp}^{-1}\bigr)_{11}q_{\parallel}q_{\parallel}^{\top}-q_{\parallel}q_{\perp}^{\top}-q_{\perp}q_{\parallel}^{\top}
+\sigma_u uu^{\top}
+\omega(m)\,P_{\mathrm{rest}},
\]
 where $\omega(m)=\frac{\mathrm{tr}(S_{\perp}^{-1})-\sigma_u}{d-m-1}$ keeps the coupling between the two subspaces exactly and approximates isotopically  only the coupling-free bulk, whose error is controlled by the complement condition number $\kappa_S^{\perp}(m)$ (Corollary~\ref{cor:frobenius}, Theorem~\ref{thm:iso},  Proposition~\ref{prop:kappa_sperp}). The standard low-rank Krylov approximation gives the whole complement zero variance and drops the coupling, we restore both, matrix-free. The same corrected covariance serves for two roles: $\Sigma_m$ approximates $A^{-1}$, and its complement part $\Sigma_m^{\perp}$ is the covariance
of the computational-uncertainty distribution $\mathcal{N}(f_m(A)b,\Sigma_m^{\perp})$ over the true $A^{-1}b$, which contracts to a point as $m\to d$. Here $f_m$ is the matrix-function approximating $A^{-1}$

\item \textbf{Projected Stochastic Lanczos Quadrature (P-SLQ).}
We give a matrix-free algorithm that estimates $\omega(m)$ using only
matrix-vector products with $A$. P-SLQ draws $N$ random probes, projects them onto the complement, runs $l$-step re-projected Lanczos processes there, and applies Gauss quadrature to the resulting tridiagonal matrices. Because every probe reuses the single Krylov basis, the total cost is $O(m+Nl)$ products (with $m, l < N$), against $O(Nm)$ for methods that rebuild a Krylov space per sample. We prove the estimator is consistent, with bias decaying as $O(\rho_{\perp}^{-2l})$ in the quadrature depth and variance at the optimal Monte Carlo rate $O(N^{-1})$ (Theorem~\ref{thm:pslq_consistency}), since the quadrature runs on the compressed complement operator, its rate is set by $\kappa^{\perp}(m)\le\kappa$ and is at least as fast as the global Lanczos rate. The coupling-corrected magnitude then follows from a single boundary probe by a deterministic Sherman-Morrison step (Section~\ref{sec:sperp_estimation}).

\item \textbf{Application: Laplace approximation in Bayesian deep learning.}
We test the corrected covariance on multiple scales  on the Laplace approximation in Bayesian deep learning \citep{daxberger2021laplace}, on FashionMNIST/CNN ($p{=}11{,}878$) where the exact full Laplace is computable,  and on CIFAR‑10/ResNet‑9 ($p{\approx}673,000$) for scalability, $\Sigma_m$ achieves near‑zero per‑point KL divergence to the exact posterior and matches the exact NLL and ECE within standard errors, while requiring only matrix‑vector products.  Additionally, on trace estimation benchmarks, P‑SLQ attains lower relative error than ULISSE and some benchmark methods at matched matrix‑vector budgets across three conditioning regimes.
\end{enumerate}

The remainder of this paper is organized as follows. In the next subsections, Section~\ref{sec:relatedwork} reviews related work on Krylov methods, stochastic trace estimation, and probabilistic numeric. Section~\ref{sec:method} formulates the inverse
approximation problem and recalls the Lanczos error bound.
Section~\ref{sec:cu} derives the complement covariance and
justifies the isotropic approximation. Section~\ref{sec:pslq} introduces P-SLQ and proves its statistical guarantees. Section~\ref{sec:experiments} presents experiments for validating our approach and Section~\ref{sec:conclusion} concludes.

\section{Related Work}
\label{sec:relatedwork}
Our work intersects four main research directions: Krylov methods for matrix functions, stochastic trace estimation, Laplace approximation for Bayesian deep learning and probabilistic numeric. We review each of these lines of work below and highlight where our contribution diverges from, and complements, the closest existing methods.

\subsection{Krylov Methods for Matrix Functions}
\label{sec:rw_krylov}
Approximating $f(A)b$ where $f$ is an analytic function   by Krylov subspace projection is classic in numerical linear algebra \citep{higham2008functions, saad2003iterative, golub2009matrices,liesen2013krylov}. The Lanczos algorithm \citep{lanczos1950} uses $m$ matrix-vector products to build an orthonormal basis $Q_m$ of $\mathcal{K}_m(A, b) = \mathrm{span}\{b, Ab, \ldots, A^{m-1}b\}$ and a tridiagonal matrix $T_m$ satisfying $AQ_m = Q_mT_m + \beta_m q_{m+1}e_m^\top$, yielding $f_m(A)b = \|b\|\, Q_m f(T_m) e_1$. The error is measured  by how well a degree-$m$
polynomial matches $f$ on the spectrum of $A$, for $f(t)=t^{-1}$,
a standard Chebyshev  \citep{saad1992analysis, druskin1989} give geometric decay in $m$ at a rate set by the condition number of $A$. \citet{chen2022} extended such bounds to a large classes of smooth functions, \citet{amsel2024nearly} showed Lanczos is nearly optimal among Krylov methods for $t^{-1}$, and \citet{meurant2006} showed that loss of orthogonality in finite precision does not prevent convergence. These results all concern the vector error $\|A^{-1}b - f_m(A)b\|_2$  and form the backbone of our analysis.

What this literature does not address is a  consequence coming from truncation: $f_m(A)$ has rank $m$ and ignores what happens to the remaining $(d-m)$-dimensional complement by  assigning  exactly zero on it,  accurate the polynomial approximation is. When $A$ is ill-conditioned and $m$ moderate, as for deep network Hessians \citep{ghorbani2019investigation, papyan2020traces, sagun2017empirical} this complement carries a large portion of the inverse spectral mass. The rank-one coupling between the resolved subspace and its complement through the residual $\beta_m$ is itself classical \citep{golub2009matrices, saad2003iterative}, we use it not to bound the vector error but to identify the true complement block of the inverse and to keep the boundary coupling inside a computational uncertainty covariance, which prior work does not do.

\subsection{The Laplace Approximation in Deep Learning}
\label{sec:rw_laplace}
The evaluation of $A^{-1}$ arises  as a Laplace
posterior \citep{mackay1992practical}, with $A$ the damped Gauss-Newton Hessian at the Maximum a posteriori estimate. \citet{daxberger2021laplace} unify several variants
(diagonal, KFAC \citep{martens2015optimizing}, low-rank, subnetwork) in a single framework. Alternative approaches such as MC Dropout \citep{gal2016dropout}, Deep Ensembles \citep{lakshminarayanan2017simple}, SWAG \citep{maddox2019simple}, and
sampling methods such as HMC \citep{neal2012bayesian} and its stochastic-gradient variants \citep{welling2011bayesian, chen2014stochastic} trade off guarantees or cost \citep{izmailov2021bayesian}. The common Hessian simplifications present  one flaw: they save computation by discarding curvature and assigning exactly zero posterior variance to the discarded directions, leaving the posterior overconfident and underestimated. The matrix-free Lanczos approach avoids structural assumptions on $A$, but inherits the truncation problem of Section~\ref{sec:rw_krylov}. Our
method measures and corrects the uncertainty of the discarded subspace without any structural assumption on $A$.

\subsection{Stochastic Trace Estimation}
\label{sec:rw_trace}
 In this work, our magnitude $\omega(m)$ is estimated  using  the complement trace $\tr\bigl((\Pmp A \Pmp)^{+}\bigr)=\tr(T_{m,\perp}^{-1})$ by the Sherman-Morrison correction of Section~\ref{sec:sperp_estimation}, so we require a trace estimate within the Krylov complement. Hutchinson's estimator \citep{hutchinson1989stochastic} gives $\mathbb{E}[\xi^\top M \xi]=\tr(M)$ for a fixed symmetric $M$, Stochastic Lanczos Quadrature (SLQ) \citep{ubaru2017fast} combines it with Gauss quadrature on the Lanczos tridiagonal, with bias vanishing in the quadrature depth and variance $O(N^{-1})$ in the number of probes. Hutch++ \citep{meyer2021hutch} and XTrace \citep{persson2022xtrace}  reduce variance by removing or reusing dominant directions.

These methods assume a matrix independent of the probes. In our setting the complement projector $\Pmp$ is built from a reference vector in the first Lanczos run, before the probes are drawn, so the compressed operator $\Pmp A \Pmp$ is fixed for those probes and Hutchinson's identity applies directly. A naive application of SLQ  fails, because the Lanczos iterations can fall outside of $\mathrm{range}(\Pmp)$ unless they are re-projected at every step, corrupting the estimated spectral density. Our P-SLQ algorithm keeps the process inside the complement by re-projecting at each step, and we prove this construction is consistent.

\subsection{Probabilistic Numerics and Computation-Aware Inference}
\label{sec:rw_pn}
Probabilistic numerics \citep{hennig2022probabilistic, cockayne2019bayesian}
give the guiding principle: early stopping of an iterative algorithm creates uncertainty that should be modeled probabilistically and propagated downstream. \citet{cockayne2019bayesiancg} introduced BayesCG, a Bayesian conjugate gradient solver whose posterior contracts at rate $\rho^{-2m}$ but assigns zero variance to
the explored subspace with no correction for the rest.
\citet{wenger2022posterior} introduced Computation-Aware Gaussian Processes (CaGP), which quantify the computational uncertainty of a Gaussian process linear solve, and \citet{stankewitz2024contraction} derived contraction rates for Lanczos posteriors in GP regression. CaGP is the closest prior work to ours, and we build directly on its computation-aware viewpoint. It targets the linear solve $K^{-1}y$, which is written as a finite sum of solver iterates, and it corrects along the solver residual, whereas we model the full $(d-m)$-dimensional complement together with the exact rank-one coupling
at the truncation boundary. In short, we extend the computation-aware program from linear solves to matrix functions.

Another    alternative  which is very closed is the unbiased solver ULISSE \citep{filippone2015enabling}, which continues the conjugate gradient iteration for a random number of extra steps and reweighs the increments by their inverse survival probability (a Russian-roulette scheme), making each solve unbiased in expectation. ULISSE and our method share the recognition that the truncation tail should not simply be discarded, but they make opposite tradeoffs.
Unbiasedness costs variance: as truncation is pushed earlier the reweighting factors grow, in the worst case without bound. We instead form a bounded, deterministic model of the complement, an exact rank-one coupling plus an isotropic bulk whose deviation from the true inverse is controlled by the single computable diagnostic $\kappa_S^{\perp}(m)$ (Theorem~\ref{thm:iso}), with a stochastic component whose variance is $O(N^{-1})$ independent of $m$. Moreover,
the roulette telescoping applies only to the solve $A^{-1}b$ and it debiases a point estimate rather than restoring a covariance. ULISSE and P-SLQ thus occupy opposite ends of the bias-variance frontier for Krylov truncation, on different computational targets.

\subsection{Low-Rank-Plus-Shift Approximations}
\label{sec:rw_lowrankshift}

The idea behind the approximation by low-rank plus a simple diagonal or  a scalar shift is well known, and our isotropic bulk correction is well related. In Gaussian process regression, the FITC approximation
\citep{snelson2006sparse}  replaces the kernel matrix $K$ by
$K \approx Q + \mathrm{diag}(K-Q)$ with $Q$ a low-rank Nystr\"om term, exactly preserving the marginal variances. Restricting the correction to a single scalar, $K \approx Q + \tau I$, the natural trace-matching choice $\tau = \tfrac{1}{n}\tr(K-Q)$ equals the mean of the discarded eigenvalues, a heuristic used to stabilize Nystr\"om approximations and avoid overconfidence in sparse GP models \citep{williams2000using, bauer2016understanding}. The same constructions appear in deflation preconditioning for positive definite symmetric systems, where a low-rank coarse correction is combined with a shift fixed to the mean eigenvalue of the deflated matrix \citep{saad2003iterative,
tang2009comparison}. In all these cases a single scalar represents the unmodeled tail of the spectrum, preserving the total trace. Our bulk term  $\omega(m)$ is exactly such a trace-matching scalar, but with two differences: it is applied to the coupling-free part of the Krylov complement after the exact boundary coupling has been removed, and it is estimated matrix-free by P-SLQ rather than assumed known.

\section{Matrix-Free Approximation via Lanczos}
\label{sec:method}

\subsection{Problem Statement}
\label{sec:problem}
Let $A \in \mathbb{R}^{d \times d}$ be symmetric positive-definite with
eigen decomposition $A = U \Lambda U^\top$,
$\Lambda = \mathrm{diag}(\lambda_1, \ldots, \lambda_d)$,
$\lambda_1 \ge \cdots \ge \lambda_d > 0$, condition number
$\kappa = \frac{\lambda_1}{\lambda_d}$, and $U$ orthogonal.
Our object of interest here  is the inverse $A^{-1}$, which we access only by matrix-vector products $v \mapsto Av$ as available, for instance, by automatic differentiation when $A$ is a Hessian matrix, without having to  form or store $A$.

The inverse $A^{-1}$ serves as the covariance of the Gaussian
$\mathcal{N}(\theta_{\star}, A^{-1})$, and drawing samples from this distribution is a core primitive in Bayesian inference. A sample is obtained from $z \sim \mathcal{N}(0, I_d)$ via any factor $L$ with $LL^\top = A^{-1}$,
\begin{equation}
\label{eq:gaussian_sample}
\theta = \theta_{\star} + L\,z .
\end{equation}\

The inverse  square root $A^{-\frac{1}{2}}$ is one of those factor, but forming it is expensive because it cost $O(d^3)$ in memory, for $d$ reaching millions it becomes intractable, even matrix-free method as Lanczos for $A^{\frac{1}{2}}$ approximation has the same problem due to the truncation as described below.  We instead construct an explicit matrix-free factor $L$  from a single Lanczos run (Section~\ref{sec:cu}), with $  A^{-1} = LL^\top \approx \Sigma_m $, and never compute a matrix square root.

The Lanczos algorithm builds a rank-$m$ approximation of $A^{-1}$ from the Krylov subspace $\mathcal{K}_m(A, v_{\mathrm{ref}})$, generated from a reference vector $v_{\mathrm{ref}}$. By construction it acts only inside the resolved subspace $\mathrm{range}(P_m)$ and assigns exactly zero to the $(d-m)$-dimensional complement $\mathrm{range}(P_m^{\perp})$, even though that complement carries a  important fraction of the inverse spectral mass when $A$ is ill-conditioned. Our objective is not the rank-$m$ approximation itself, which is well known, but to quantify the computational uncertainty this truncation discards, the epistemic uncertainty from stopping the iteration at step $m$ and to restore it, using only matrix-vector products with $A$. We represent this uncertainty by a correction supported on the unresolved subspace with two components: a rank-one coupling at the truncation boundary, captured exactly by the Lanczos residual $\beta_m$, and an isotropic variance $\omega(m)$ on the uncoupled bulk. The remainder of the paper develops this correction (Section~\ref{sec:cu}) and a matrix-free estimator for its
magnitude (Section~\ref{sec:pslq}).

\subsection{The Lanczos Algorithm}
Given a  reference vector $q_1 = \frac{v_{\mathrm{ref}}}{ \|v_{\mathrm{ref}}\|_2} $ and an integer $m \geq 1$, the Lanczos algorithm \citep{lanczos1950, saad2003iterative} builds an orthonormal basis of the Krylov subspace
\begin{equation}
  \label{eq:krylov}
  \mathcal{K}_m(A, v_{\mathrm{ref}})
  = \mathrm{span}\{v_{\mathrm{ref}},\, Av_{\mathrm{ref}},\,
    A^2 v_{\mathrm{ref}},\, \ldots,\, A^{m-1} v_{\mathrm{ref}}\}
\end{equation}
via the three terms recurrence
\begin{equation}
  \label{eq:recurrence}
  A q_j
  = \alpha_j q_j + \beta_j q_{j+1} + \beta_{j-1} q_{j-1},
  \qquad j = 1, \ldots, m-1,
\end{equation}
where $\alpha_j = q_j^\top A q_j$ and
$\beta_j = \|Aq_j - \alpha_j q_j - \beta_{j-1}q_{j-1}\|_2$.
In matrix form,
\begin{equation}
  \label{eq:lanczos_relation}
  A Q_m = Q_m T_m + \beta_m q_{m+1} e_m^\top,
\end{equation}
where $Q_m = [q_1, \ldots, q_m] \in \mathbb{R}^{d \times m}$ has orthonormal columns, $Q_m^\top Q_m = I_m$, and $T_m \in \mathbb{R}^{m \times m}$ is the symmetric tridiagonal matrix
\begin{equation}
  \label{eq:tridiagonal}
  T_m = Q_m^\top A Q_m
  =
  \begin{pmatrix}
    \alpha_1 & \beta_1  &          &          \\
    \beta_1  & \alpha_2 & \ddots   &          \\
             & \ddots   & \ddots   & \beta_{m-1} \\
             &          & \beta_{m-1} & \alpha_m
  \end{pmatrix}.
\end{equation}
The matrix $T_m$ is the restriction of $A$ to $\mathcal{K}_m(A, v_{\mathrm{ref}})$, and its eigenvalues $\lambda_1^{(m)} \geq \cdots \geq \lambda_m^{(m)} > 0$ are the Ritz values of $A$ with respect to $\mathcal{K}_m(A, v_{\mathrm{ref}})$. The basis $Q_m$ is built once from $v_{\mathrm{ref}}$ and reused throughout: for the covariance approximation, for sampling, and, in Section~\ref{sec:pslq}, across all
probe vectors.

\subsection{Lanczos-Based Approximation of the Inverse}

The Lanczos approximation of the action of $A^{-1}$ is obtained by projecting the inverse onto the Krylov subspace. With $T_m = S \Lambda_m S^\top$ the eigendecomposition of $T_m$, the $m$-step approximation of $A^{-1}$ is
\begin{equation}
  \label{eq:lanczos_inverse}
  f_m(A) = Q_m T_m^{-1} Q_m^\top
  = Q_m S \Lambda_m^{-1} S^\top Q_m^\top ,
\end{equation}
computed from exactly $m$ matrix-vector products with $A$, at cost $O(md)$ in memory and $O(m^2 d)$ in time. Applied to a vector $b$, it yields the standard Krylov estimate $Q_m T_m^{-1} Q_m^\top b$ of the solve $A^{-1}b$ \citep{saad2003iterative, higham2008functions}.

The orthogonal projectors onto the resolved Krylov subspace and its complement are
\begin{equation}
  \label{eq:projectors}
  P_m = Q_m Q_m^\top,
  \qquad
  P_m^\perp = I_d - Q_m Q_m^\top .
\end{equation}
The approximation \eqref{eq:lanczos_inverse} acts entirely within
$\mathrm{range}(P_m)$ and assigns exactly zero to $\mathrm{range}(P_m^{\perp})$. Our focus is not on eliminating this rank-$m$ deficiency, but rather on analyzing and quantifying it. The computational uncertainty we address is precisely the variance discarded by this rank-m truncation, we approximate the missing covariance and restore it in the next section. In Section~\ref{sec:cu} we characterize the part  $f_m(A)$ discards on the complement, restore it exactly at the truncation boundary and isotropically on the bulk, and assemble the explicit matrix-free factor $[\,M_1\ M_2\ M_3\,]$ used for sampling in \eqref{eq:gaussian_sample}.

\section{Computational Uncertainty on the Krylov Complement}
\label{sec:cu}

Let $Q_m\in\mathbb{R}^{d\times m}$ be the Lanczos basis after $m$ iterations, and  $Q_{m,\perp} \in\mathbb{R}^{d\times(d-m)}$ be any orthonormal basis of the  orthogonal
complement of $Q_m$, so that $[\,Q_m\ \ Q_{m,\perp}\,]$ is orthogonal. We never form $Q_{m,\perp}$, in fact  it appears only in the
analysis, and all quantities below depend on it  through
$\mathrm{range}(\Pmp)$, hence are invariant to its choice. The
complement is used only through the projector $\Pmp = I_d - \Qm\Qm^{\top}$, applied as $v \mapsto v - \Qm(\Qm^{\top} v)$. Define the projectors
\[
P_m=Q_mQ_m^{\top},\qquad P_m^{\perp}=I_d-P_m=Q_{m,\perp} Q_{m,\perp}^{\top}.
\]
Writing $A$ in this basis gives the symmetric block form
\begin{equation}
\begin{bmatrix}Q_m^{\top}\\[0.5em] Q_{m,\perp}^{\top}\end{bmatrix} A
\begin{bmatrix}Q_m & Q_{m,\perp}\end{bmatrix}
=\begin{bmatrix}
Q_m^{\top}A Q_{m} & Q_m^{\top}A Q_{m,\perp} \\[1em] 
Q_{m,\perp}^{\top}A Q_{m} & Q_{m,\perp}^{\top}A Q_{m,\perp}
\end{bmatrix},
\label{eq:blocks}
\end{equation}
Denote $T_m = Q_m^{\top}A Q_{m}$ and $T_{m,\perp}=Q_{m,\perp}^{\top}A Q_{m,\perp}$. The block $T_{m,\perp}=Q_{m,\perp}^{\top}A Q_{m,\perp}$ is the compression of $A$ to the unresolved subspace, which  is  also the restriction to $\mathrm{range}(P_m^{\perp})$ of the compressed operator
\begin{equation}
A_{m,\perp}=P_m^{\perp} A\,P_m^{\perp}.
\label{eq:Bdef}
\end{equation}
Since $A$ is symmetric positive definite, so  $T_{m,\perp}$  is too , and $A_{m,\perp}$ is symmetric positive semi-definite, with Moore-Penrose pseudo-inverse
\begin{equation}
A_{m,\perp}^{+}=Q_{m,\perp} T_{m,\perp}^{-1}Q_{m,\perp}^{\top},
\label{eq:Bpinv}
\end{equation}
that is verified directly from the four Penrose conditions using
$Q_{m,\perp}^{\top}Q_{m,\perp}=I_{d-m}$, indeed $A_{m,\perp} A_{m,\perp}^{+}=A_{m,\perp}^{+} A_{m,\perp}=P_m^{\perp}$, so $A_{m,\perp}^{+}$
inverts $A_{m,\perp}$ on $\mathrm{range}(P_m^{\perp})$. That implies
\begin{equation}
\mathrm{tr}\big(A_{m,\perp}^{+}\big)
=\mathrm{tr}\big(T_{m,\perp}^{-1}\big),
\label{eq:trace-id}
\end{equation}
in fact, 
\begin{align*}
\mathrm{tr}\big(A_{m,\perp}^{+}\big)
&=\mathrm{tr}\big(Q_{m,\perp}T_{m,\perp}^{-1}Q_{m,\perp}^{\top}\big)\\
&=\mathrm{tr}\big(T_{m,\perp}^{-1}Q_{m,\perp}^{\top} Q_{m,\perp}\big)\\
& =\mathrm{tr}\big(T_{m,\perp}^{-1}\big) \qquad \text{since } Q_{m,\perp}^{\top} Q_{m,\perp} = I_{d-m}.
\end{align*}
The eigenvalues of $T_{m,\perp}$, $\mu_1\ge\cdots\ge\mu_{d-m}>0$, are the Ritz values of $A$ on the unresolved subspace and  since $T_{m,\perp}$ is a compression of $A$ to a subspace of codimension $m$, the Poincar\'e separation theorem gives
\begin{equation}
\lambda_{j+m}\ \le\ \mu_j\ \le\ \lambda_j,\qquad j=1,\dots,d-m,
\label{eq:interlace}
\end{equation}
in particular every $\mu_j\in[\lambda_d,\lambda_1]$, and the lower extreme satisfies $\mu_{d-m}\ge\lambda_d$, so $\|T_{m,\perp}^{-1}\|_2\le\lambda_d^{-1}$.

The resolved and unresolved blocks are coupled only by the off-diagonal block $A_{m,\times} = Q_m^{\top}A Q_{m,\perp}$. The Lanczos recurrence forces this coupling to have rank one, relying to the single residual $\beta_m$.

\begin{lemma}
\label{lem:rankone_coupling}
Let $Q_m$, $T_m$, $\beta_m$ and $q_{m+1}$ coming from $m$ steps of the Lanczos algorithm \eqref{eq:lanczos_relation}, and choose the complement basis $Q_{m,\perp}$ so that its first column is $q_{m+1}$. Then the off-diagonal block of $A$ in the basis $[\,Q_m\ \ Q_{m,\perp}\,]$ is rank one, and gives
\begin{equation}
A_{m,\times}
=Q_m^{\top}A Q_{m,\perp}
=\beta_m\,e_m e_1^{\top},
\label{eq:rankone_coupling}
\end{equation}
where $e_m\in\mathbb{R}^{m}$ and $e_1\in\mathbb{R}^{d-m}$ are the last resolved and
first unresolved coordinate vectors. We have $\|A_{m,\times}\|_2=\beta_m$, and the resolved subspace is an exact invariant subspace of $A$ \textup{(}i.e.\ $A_{m,\times}=0$\textup{)} if and only if $\beta_m=0$, i.e. the Lanczos process has converged.
\end{lemma}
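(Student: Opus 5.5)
The plan is to compute the off-diagonal block directly from the Lanczos relation \eqref{eq:lanczos_relation}, using symmetry of $A$. Because $A$ is symmetric, $A_{m,\times}=Q_m^{\top}AQ_{m,\perp}=(Q_{m,\perp}^{\top}AQ_m)^{\top}$. It is therefore enough to compute $Q_{m,\perp}^{\top}AQ_m$. Substituting $AQ_m=Q_mT_m+\beta_m q_{m+1}e_m^{\top}$ gives
\[
Q_{m,\perp}^{\top}AQ_m=Q_{m,\perp}^{\top}Q_mT_m+\beta_m\,Q_{m,\perp}^{\top}q_{m+1}\,e_m^{\top}.
\]
The first term vanishes because $Q_{m,\perp}^{\top}Q_m=0$. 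For the second term, I need $q_{m+1}\perp\mathrm{range}(Q_m)$ and $\|q_{m+1}\|_2=1$ in exact arithmetic. Both follow from the standard Lanczos orthogonality, and they are exactly what makes it legitimate to choose $q_{m+1}$ as the first column of $Q_{m,\perp}$. With that choice, orthonormality of the columns of $Q_{m,\perp}$ gives $Q_{m,\perp}^{\top}q_{m+1}=e_1\in\mathbb{R}^{d-m}$. Transposing then yields $A_{m,\times}=\beta_m e_me_1^{\top}$, which is \eqref{eq:rankone_coupling}.

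The degenerate case needs a separate remark. If $\beta_m=0$, then $q_{m+1}$ is undefined. In that case I read the hypothesis as choosing an arbitrary first column of $Q_{m,\perp}$. The same computation still gives $A_{m,\times}=0$, so the formula holds with either convention.

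For the norm, $e_me_1^{\top}$ is an outer product of unit vectors, so its spectral norm is $1$. Since $\beta_m\ge 0$ by definition as a norm, $\|A_{m,\times}\|_2=\beta_m$.

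For the equivalence, the key observation is that $\mathrm{range}(Q_m)$ is $A$-invariant if and only if $P_m^{\perp}AQ_m=0$, which is equivalent to $Q_{m,\perp}^{\top}AQ_m=0$, and by symmetry to $A_{m,\times}=0$. By the norm identity just proved, this holds if and only if $\beta_m=0$. This is precisely the Lanczos termination (breakdown) condition, meaning the Krylov space $\mathcal{K}_m(A,v_{\mathrm{ref}})$ is invariant.

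I expect no serious obstacle. The only delicate point is the bookkeeping in the $\beta_m=0$ case and noting that everything is in exact arithmetic, where $q_{m+1}\perp Q_m$ holds exactly.
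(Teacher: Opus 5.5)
Your proposal is correct and matches the paper's proof: substitute the Lanczos relation into $Q_{m,\perp}^{\top}AQ_m$, drop the $Q_mT_m$ term because $Q_{m,\perp}^{\top}Q_m=0$, use $Q_{m,\perp}^{\top}q_{m+1}=e_1$, and transpose. Your handling of the $\beta_m=0$ case, where $q_{m+1}$ is undefined, and your explicit check that invariance of $\mathrm{range}(Q_m)$ is equivalent to $A_{m,\times}=0$ are small refinements that the paper leaves implicit.
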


\begin{proof}
The Lanczos relation \eqref{eq:lanczos_relation} reads
$A Q_m = Q_m T_m + \beta_m\,q_{m+1}e_m^{\top}$. Left-multiplying by
$Q_{m,\perp}^{\top}$ and using $Q_{m,\perp}^{\top}Q_m=0$ we obtain,
\begin{equation}
Q_{m,\perp}^{\top}A Q_m
=\beta_m\,\bigl(Q_{m,\perp}^{\top}q_{m+1}\bigr)e_m^{\top}
=\beta_m\,e_1 e_m^{\top},
\end{equation}
since $q_{m+1}$ is the first column of $Q_{m,\perp}$, so
$Q_{m,\perp}^{\top}q_{m+1}=e_1$. Transposing gives
$A_{m,\times}=Q_m^{\top}A Q_{m,\perp}=\beta_m\,e_m e_1^{\top}$, which is \eqref{eq:rankone_coupling}. As $e_m$ and $e_1$ are unit vectors,
$\|A_{m,\times}\|_2=\beta_m$, and $A_{m,\times}=0$ if and only if $\beta_m=0$.
\end{proof}

Based on the above, the entire coupling between the resolved and unresolved subspaces relies on the  scalar $\beta_m$, relating the last resolved direction $q_m$ to the first unresolved direction $q_{m+1}$. This is the boundary of the truncation, the standard low-rank approximation ignores it by implicitly setting $\beta_m=0$, treating the resolved subspace as an exact invariant subspace. The next section restores it.

\subsection{Truncation Error and Complement Inverse}

As the Lanczos approximation acts only inside the Krylov
subspace, the resolved component $f_m(A)=Q_mT_m^{-1}Q_m^{\top}$ captures  only the dominant spectral directions of $A$ contained in $\mathcal{K}_m(A,v)$. The remaining error therefore lies  in the unresolved complement space. In other words, the Lanczos approximation presents the covariance structure in $\mathcal{K}_m(A,v)$, while the missing spectral information is found in  its orthogonal complement.

To identify the correct complement target we invert $A$ in the block basis $[\,Q_m\ \ Q_{m,\perp}\,]$. By Lemma~\ref{lem:rankone_coupling}, the off-diagonal block is the rank-one $A_{m,\times}=\beta_m e_m e_1^{\top}$, so the Schur complement of $T_m$ gives the complement block of $A^{-1}$ in the basis $[\,Q_m\ \ Q_{m,\perp}\,]$
\begin{equation}
S_{\perp}^{-1}=\bigl(T_{m,\perp}-A_{m,\times}^{\top}T_m^{-1}A_{m,\times}\bigr)^{-1},
\label{eq:complement_block}
\end{equation}
where, using $A_{m,\times}=\beta_m e_m e_1^{\top}$,
\begin{equation}
S_{\perp}
=T_{m,\perp}-\gamma\,e_1 e_1^{\top},
\qquad
\gamma=\beta_m^{2}\,\bigl(T_m^{-1}\bigr)_{mm}\ge 0 .
\label{eq:Sperp}
\end{equation}
Thus the true complement block of $A^{-1}$ in the basis $[\,Q_m\ \ Q_{m,\perp}\,]$ is not $T_{m,\perp}^{-1}$ but the inverse of the rank-one  regularized $S_{\perp}$. Notice  the boundary direction $e_1$ representing $q_{m+1}$ in an orthogonal complement basis starting with $q_{m+1}$. 

\begin{remark}
\label{rem:beta_zero}
Setting $\beta_m=0$ gives $\gamma=0$ and $S_{\perp}=T_{m,\perp}$, recovering the compressed target $T_{m,\perp}^{-1}$. Modeling the complement by $T_{m,\perp}^{-1}$  assumes the resolved subspace is an exact invariant subspace of $A$. Since $\gamma\ge 0$, we have $S_{\perp}\preceq T_{m,\perp}$ and then $S_{\perp}^{-1}\succeq T_{m,\perp}^{-1}$, indeed the decoupled target under-covers the boundary direction, and the deficit is governed by the single computable scalar $\gamma$.
\end{remark}

\subsection*{The exact block inverse and the coupling vectors}

After identifying  the complement block of $A^{-1}$, we now invert the full block matrix. With the rank-one coupling of Lemma~\ref{lem:rankone_coupling}, all four blocks of $A^{-1}$ are  closed forms governed by two vectors. Define the coupling pair
\begin{equation}
q_{\parallel} =  \beta_m\,Q_m T_m^{-1}e_m  \in \mathrm{range}(P_m),
\qquad
q_{\perp} =  Q_{m,\perp}\,S_{\perp}^{-1}e_1  \in \mathrm{range}(P_m^{\perp}).
\label{eq:coupling_pair}
\end{equation}
The vector $q_{\parallel}$ is the resolved side of the coupling: the last resolved coordinate $e_m$, propagated through the resolved inverse and scaled by the residual $\beta_m$, it requires only $Q_m$, $T_m$ and $\beta_m$. The vector $q_{\perp}$ is the complement side: the boundary direction $e_1$, propagated through the complement block $S_{\perp}^{-1}$ of \eqref{eq:complement_block}.

\begin{proposition}[Exact block inverse under rank-one coupling]
\label{prop:block_inverse}
With $S_{\perp}$ and $\gamma$ as in \eqref{eq:Sperp} and  $q_{\parallel}, q_{\perp}$ as in
\eqref{eq:coupling_pair}, the four blocks of $A^{-1}$ in the basis
$[\,Q_m\ \ Q_{m,\perp}\,]$ are
\begin{align}
&\text{(i)}\quad Q_{m,\perp}^{\top}A^{-1}Q_{m,\perp} = S_{\perp}^{-1},
\label{eq:blk_comp}\\
&\text{(ii)}\quad Q_m^{\top}A^{-1}Q_m = T_m^{-1}
+\beta_m^{2}\,\bigl(S_{\perp}^{-1}\bigr)_{11}\,
\bigl(T_m^{-1}e_m\bigr)\bigl(T_m^{-1}e_m\bigr)^{\!\top},
\label{eq:blk_res}\\
&\text{(iii)}\quad Q_m^{\top}A^{-1}Q_{m,\perp} = -\,T_m^{-1}A_{m,\times}S_{\perp}^{-1}
\;=\;-\,\beta_m\,\bigl(T_m^{-1}e_m\bigr)\bigl(S_{\perp}^{-1}e_1\bigr)^{\!\top}.
\label{eq:blk_off}
\end{align}
In the full space, we have 
\begin{equation}
A^{-1}
=Q_m T_m^{-1}Q_m^{\top}
+\bigl(S_{\perp}^{-1}\bigr)_{11} q_{\parallel} q_{\parallel}^{\top}
- q_{\parallel} q_{\perp}^{\top}-q_{\perp} q_{\parallel}^{\top}
+Q_{m,\perp}S_{\perp}^{-1}Q_{m,\perp}^{\top} .
\label{eq:Ainv_exact}
\end{equation}
Setting $\beta_m=0$ delete $q_{\parallel}$, $q_{\perp}$ and $\gamma$, and \eqref{eq:Ainv_exact} reduces to $Q_mT_m^{-1}Q_m^{\top}+Q_{m,\perp}T_{m,\perp}^{-1}Q_{m,\perp}^{\top}$, the decoupled form  the compressed target of Remark~\ref{rem:beta_zero}.
\end{proposition}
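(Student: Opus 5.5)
The plan is to apply the standard $2\times 2$ block-inversion formula to $A$ written in the orthogonal basis $V=[\,Q_m\ \ Q_{m,\perp}\,]$, where $Q_{m,\perp}$ is chosen with first column $q_{m+1}$ as in Lemma~\ref{lem:rankone_coupling}. Since $V$ is orthogonal, $V^{\top}A^{-1}V=(V^{\top}AV)^{-1}$, and by \eqref{eq:blocks} together with Lemma~\ref{lem:rankone_coupling} we have
\[
V^{\top}AV=\begin{bmatrix} T_m & \beta_m e_m e_1^{\top}\\ \beta_m e_1 e_m^{\top} & T_{m,\perp}\end{bmatrix}.
\]
Both $T_m$ and the Schur complement $S_\perp=T_{m,\perp}-A_{m,\times}^{\top}T_m^{-1}A_{m,\times}$ are invertible. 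Indeed, $T_m$ is a compression of the SPD matrix $A$, and $S_\perp^{-1}$ is a principal block of the SPD matrix $(V^{\top}AV)^{-1}$, so $S_\perp$ is SPD. This justifies the formulas and shows that $(S_\perp^{-1})_{11}$ is well defined and positive.

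The standard Schur identities for a symmetric block matrix with blocks $T_m, A_{m,\times}, T_{m,\perp}$ give three blocks. The lower-right block is $S_\perp^{-1}$, which is (i). The off-diagonal block is $-T_m^{-1}A_{m,\times}S_\perp^{-1}$, which becomes (iii) after substituting $A_{m,\times}=\beta_m e_m e_1^{\top}$. The upper-left block is $T_m^{-1}+T_m^{-1}A_{m,\times}S_\perp^{-1}A_{m,\times}^{\top}T_m^{-1}$. Substituting the rank-one coupling gives
\[
\beta_m^2\,T_m^{-1}e_m\,\bigl(e_1^{\top}S_\perp^{-1}e_1\bigr)\,e_m^{\top}T_m^{-1},
\]
which is (ii) with the scalar $(S_\perp^{-1})_{11}$. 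The identity $A_{m,\times}^{\top}T_m^{-1}A_{m,\times}=\beta_m^2(T_m^{-1})_{mm}e_1e_1^{\top}$ simultaneously confirms \eqref{eq:Sperp}.

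Next we lift back via $A^{-1}=V(V^{\top}A^{-1}V)V^{\top}$, which expands into four terms: $Q_m(\cdot)Q_m^{\top}$, the two cross terms, and $Q_{m,\perp}S_\perp^{-1}Q_{m,\perp}^{\top}$. In the resolved term we recognize
\[
\beta_m^2(S_\perp^{-1})_{11}\,Q_mT_m^{-1}e_m\,(Q_mT_m^{-1}e_m)^{\top}=(S_\perp^{-1})_{11}\,q_\parallel q_\parallel^{\top}.
\]
In the cross term,
\[
-\beta_m\,Q_mT_m^{-1}e_m\,(S_\perp^{-1}e_1)^{\top}Q_{m,\perp}^{\top}=-q_\parallel q_\perp^{\top},
\]
and its transpose gives $-q_\perp q_\parallel^{\top}$. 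These together yield \eqref{eq:Ainv_exact}. For the final claim, setting $\beta_m=0$ makes $\gamma=0$ and $q_\parallel=0$, so $S_\perp=T_{m,\perp}$ and every coupling term vanishes. (Although $q_\perp$ itself need not vanish, it only ever appears multiplied by $q_\parallel$.)

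The main point requiring care, rather than a true obstacle, is bookkeeping. We must keep $e_m\in\mathbb{R}^m$ distinct from $e_1\in\mathbb{R}^{d-m}$, check the sign and transpose conventions in the off-diagonal block, and note that the choice of $Q_{m,\perp}$ with leading column $q_{m+1}$ is what makes $A_{m,\times}$ exactly rank one. We also observe that the full-space expression \eqref{eq:Ainv_exact} does not depend on the remaining columns of $Q_{m,\perp}$, consistent with the invariance remark at the start of Section~\ref{sec:cu}.
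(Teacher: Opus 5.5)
Your proposal is correct and follows the paper's proof. Both invert $A$ blockwise in the basis $[\,Q_m\ \ Q_{m,\perp}\,]$ via the Schur complement of $T_m$, substitute $A_{m,\times}=\beta_m e_m e_1^{\top}$, and lift back to read off $q_{\parallel}$ and $q_{\perp}$. Your additions are also correct: you justify that $S_{\perp}$ is invertible and positive definite (which the paper skips), and you note that $q_{\perp}=Q_{m,\perp}S_{\perp}^{-1}e_1$ never vanishes at $\beta_m=0$, only its products with $q_{\parallel}$ do, which sharpens the statement's claim that $q_{\perp}$ is deleted.
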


\begin{proof}
Denote $\widetilde A=\begin{bsmallmatrix}T_m & A_{m,\times}\\
A_{m,\times}^{\top} & T_{m,\perp}\end{bsmallmatrix}$ for $A$ in the block basis.
The block-inverse formula  at the Schur complement
$S_{\perp}=T_{m,\perp}-A_{m,\times}^{\top}T_m^{-1}A_{m,\times}$ gives
\[
\widetilde A^{-1}
=\begin{bmatrix}
T_m^{-1}+T_m^{-1}A_{m,\times}S_{\perp}^{-1}A_{m,\times}^{\top}T_m^{-1}
& -\,T_m^{-1}A_{m,\times}S_{\perp}^{-1}\\[2pt]
-\,S_{\perp}^{-1}A_{m,\times}^{\top}T_m^{-1}
& S_{\perp}^{-1}
\end{bmatrix},
\]
which can be verified by direct multiplication $\widetilde A\,\widetilde A^{-1}=I_d$.
Substituting $A_{m,\times}=\beta_m e_m e_1^{\top}$
(Lemma~\ref{lem:rankone_coupling}):
for the off-diagonal block,
\begin{align*}
T_m^{-1}A_{m,\times}S_{\perp}^{-1}
&=\beta_m\,(T_m^{-1}e_m)(e_1^{\top}S_{\perp}^{-1}) \\
&=\beta_m\,(T_m^{-1}e_m)(S_{\perp}^{-1}e_1)^{\top},
\end{align*}
using the symmetry of $S_{\perp}^{-1}$, this is \eqref{eq:blk_off}. For the
resolved block,
\begin{align*}
A_{m,\times}S_{\perp}^{-1}A_{m,\times}^{\top}
&=\beta_m^{2}\,e_m\,(e_1^{\top}S_{\perp}^{-1}e_1)\,e_m^{\top} \\
&=\beta_m^{2}\bigl(S_{\perp}^{-1}\bigr)_{11}e_m e_m^{\top},
\end{align*}
giving \eqref{eq:blk_res}. Equation \eqref{eq:blk_comp} is
\eqref{eq:complement_block}. Finally, mapping back to the full space via
$A^{-1}=[\,Q_m\ Q_{m,\perp}\,]\widetilde A^{-1}[\,Q_m\ Q_{m,\perp}\,]^{\top}$
    and putting $ q_{\parallel}=\beta_m Q_mT_m^{-1}e_m$, $ q_{\perp}=Q_{m,\perp}S_{\perp}^{-1}e_1$
gives \eqref{eq:Ainv_exact}.
\end{proof}

\subsection*{The corrected Inverse Approximation: exact coupling, isotropic bulk}

Equation~\eqref{eq:Ainv_exact} is exact but its complement block
$Q_{m,\perp}S_{\perp}^{-1}Q_{m,\perp}^{\top}$ remains a $(d-m)$-dimensional
operator that cannot be formed at matrix-free cost.  We approximate it  as isotropic block using   the complement direction that is part  of  the coupling,
\begin{equation}
u=\frac{q_{\perp}}{\|q_{\perp}\|_2},
\qquad
\|q_{\perp}\|_2^{2}=e_1^{\top}S_{\perp}^{-2}e_1,
\label{eq:u_def}
\end{equation}
which is removed  and retained exactly, and  only the remaining
$(d-m-1)$-dimensional bulk, so  that the coupling never touches, is  approximate isotropically.
Let
\begin{equation}
\sigma_u \;=\; u^{\top}\bigl(Q_{m,\perp}S_{\perp}^{-1}Q_{m,\perp}^{\top}\bigr)u
\;=\;\frac{e_1^{\top}S_{\perp}^{-3}e_1}{e_1^{\top}S_{\perp}^{-2}e_1}
\label{eq:sigma_g}
\end{equation}
be the exact variance of the complement block along $u$, and approximate isotropically the remainder by  the preserving  trace magnitude
\begin{equation}
\omega(m)
\;=\;\frac{\mathrm{tr}\!\bigl(S_{\perp}^{-1}\bigr)-\sigma_u}{d-m-1},
\qquad
P_{\mathrm{rest}}
\;=\;I_d-Q_mQ_m^{\top}-uu^{\top},
\label{eq:omega_rest}
\end{equation}
so that
$\sigma_u+(d-m-1)\,\omega(m)=\mathrm{tr}(S_{\perp}^{-1})$ exactly. The
corrected inverse is
\begin{equation}
\Sigma_m
=Q_m T_m^{-1}Q_m^{\top}
+ \bigl(S_{\perp}^{-1}\bigr)_{11}q_{\parallel} q_{\parallel}^{\top}
- q_{\parallel} q_{\perp}^{\top}-q_{\perp} q_{\parallel}^{\top}
+\sigma_u\,u\,u^{\top}
+\omega(m)\,P_{\mathrm{rest}}
\label{eq:Sigma_coupled}
\end{equation}
Form this approximation, we can notice that  the coupling is kept
exactly as it is, that means  the middle block of \eqref{eq:Sigma_coupled} reproduces the corresponding terms of the exact inverse \eqref{eq:Ainv_exact} without approximation, also the trace of the complement approximated is  equal to  $\mathrm{tr}(S_{\perp}^{-1})$. The only remaining approximation is the isotropy of the  bulk whose the coupling doesn't touch, whose cost is quantified in Section~\ref{sec:isotropic_justification}.

\subsection*{Sampling from the corrected inverse as covariance}

To draw $\theta\sim\mathcal{N}(\theta_{\star},\Sigma_m)$ it suffices
to find any matrix $G$ so that $GG^{\top}=\Sigma_m$, for
$z\sim\mathcal{N}(0,I)$, the vector $\theta_{\star} +Gz$ has
covariance $G\,\mathbb{E}[zz^{\top}]\,G^{\top}=GG^{\top}=\Sigma_m$. The matrix $G$ need not be square, and the three terms  of
\eqref{eq:Sigma_coupled} yields one factor per part. The resolved and bulk parts factor as in the decoupled case, the new object is the boundary-coupling block, which we now reduce to a $2\times2$ covariance.

\paragraph{The coupling block is 2-dimensional.}
The four coupling terms of \eqref{eq:Sigma_coupled},
\begin{equation}
K = \bigl(S_{\perp}^{-1}\bigr)_{11}q_{\parallel} q_{\parallel}^{\top}
- q_{\parallel} q_{\perp}^{\top}-q_{\perp} q_{\parallel}^{\top}
+\sigma_u\,u\,u^{\top},
\label{eq:Kblock}
\end{equation}
involve only the two directions $q_{\parallel}$ and $q_{\perp}$. Since
$q_{\parallel} \in\mathrm{range}(P_m)$ and $ q_{\perp} \in\mathrm{range}(P_m^{\perp})$, these are
orthogonal, and after normalisation $\hat q=\frac{q_{\parallel}}{\|q_{\parallel}\|_2}$ and $u=\frac{q_{\perp}}{\|q_{\perp}\|_2}$ form
an orthonormal basis of the plane $\mathcal{P}=\mathrm{span}\{q_{\parallel} ,q_{\perp}\}$. The operator $K$ vanishes on $\mathcal{P}^{\perp}$ and is therefore determined by its $2\times2$ compression onto $\mathcal{P}$. Using $\hat q^{\top} q_{\parallel} =\|q_{\parallel}\|_2$, $u^{\top} q_{\perp}=\|q_{\perp}\|_2$ and $\hat q\perp u$, the entries of this compression are
\begin{equation}
\hat q^{\top}K\hat q=\bigl(S_{\perp}^{-1}\bigr)_{11}\|q_{\parallel}\|_2^{2},
\qquad
u^{\top}K u=\sigma_u,
\qquad
\hat q^{\top}K u=u^{\top}K\hat q=-\|q_{\parallel}\|_2\,\|q_{\perp}\|_2 .
\label{eq:C2_entries}
\end{equation}
From that, the coupling covariance  is defined as 
\begin{equation}
C_2=\begin{pmatrix}
\bigl(S_{\perp}^{-1}\bigr)_{11}\,\|q_{\parallel}\|_2^{2} & -\,\|q_{\parallel}\|_2\,\|q_{\perp}\|_2\\[2pt]
-\,\|q_{\parallel}\|_2\,\|q_{\perp}\|_2 & \sigma_u
\end{pmatrix},
\label{eq:C2}
\end{equation}
whose diagonal is the two boundary variances and whose off-diagonal is the resolved-complement coupling.

\begin{lemma}[The coupling covariance is positive semidefinite]
\label{lem:C2_psd}
Let $C_2$ be defined as \eqref{eq:C2}; hence, $C_2$ is positive semi-definite.
\end{lemma}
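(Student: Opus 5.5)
The plan is to use the $2\times 2$ criterion: a symmetric $2\times2$ matrix is positive semidefinite if and only if both diagonal entries are nonnegative and its determinant is nonnegative. Everything reduces to facts about the single SPD matrix $S_{\perp}$ and the vector $e_1$. So the first step is to establish that $S_{\perp}$ is symmetric positive definite.

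\textbf{Positive definiteness of $S_{\perp}$.} By \eqref{eq:complement_block}, $S_{\perp}$ is the Schur complement of $T_m$ in the block matrix $\widetilde A=[\,Q_m\ Q_{m,\perp}\,]^{\top}A[\,Q_m\ Q_{m,\perp}\,]$. This matrix is SPD because it is an orthogonal similarity of $A$. Schur complements of SPD matrices are SPD; equivalently, Proposition~\ref{prop:block_inverse}(i) identifies $S_{\perp}^{-1}$ as a principal block of $A^{-1}\succ 0$. Hence $S_{\perp}^{-1/2}$ and all powers $S_{\perp}^{-k}$ are well defined and SPD. In particular $q_{\perp}\neq 0$, so $u$ and $\sigma_u$ in \eqref{eq:u_def}--\eqref{eq:sigma_g} are well defined.

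\textbf{Diagonal entries.} Write $v=e_1$. Then $(S_{\perp}^{-1})_{11}=v^{\top}S_{\perp}^{-1}v>0$, and so $(S_{\perp}^{-1})_{11}\|q_{\parallel}\|_2^2\ge 0$. Also $\sigma_u=v^{\top}S_{\perp}^{-3}v/v^{\top}S_{\perp}^{-2}v>0$, since both numerator and denominator are positive by positive definiteness.

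\textbf{Determinant.} We have
\[
\det C_2=\|q_{\parallel}\|_2^2\Bigl[(S_{\perp}^{-1})_{11}\,\sigma_u-\|q_{\perp}\|_2^2\Bigr].
\]
If $q_{\parallel}=0$ (e.g. $\beta_m=0$), this is $0$ and we are done. Otherwise, substituting $\sigma_u$ and $\|q_{\perp}\|_2^2=v^{\top}S_{\perp}^{-2}v$, and multiplying through by the positive quantity $v^{\top}S_{\perp}^{-2}v$, the bracket is nonnegative if and only if
\[
\bigl(v^{\top}S_{\perp}^{-1}v\bigr)\bigl(v^{\top}S_{\perp}^{-3}v\bigr)\ \ge\ \bigl(v^{\top}S_{\perp}^{-2}v\bigr)^2 .
\]
This follows from the Cauchy--Schwarz inequality applied to $a=S_{\perp}^{-1/2}v$ and $b=S_{\perp}^{-3/2}v$. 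Indeed, $\langle a,b\rangle=v^{\top}S_{\perp}^{-2}v$, $\|a\|^2=v^{\top}S_{\perp}^{-1}v$ and $\|b\|^2=v^{\top}S_{\perp}^{-3}v$.

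The only real content is this moment inequality (log-convexity of $k\mapsto v^{\top}S_{\perp}^{-k}v$). Everything else is bookkeeping. The one point requiring care is justifying $S_{\perp}\succ0$, since $S_{\perp}$ is a rank-one downdate $T_{m,\perp}-\gamma e_1e_1^{\top}$ with $\gamma\ge0$, and so it is not obviously definite. I would rely on the Schur-complement argument above rather than try to bound $\gamma$ directly.
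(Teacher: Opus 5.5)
Your proof is correct and follows essentially the same route as the paper: the $2\times 2$ criterion reduces positivity to $\det C_2\ge 0$, you use the same factorization of $\det C_2$, and you apply Cauchy--Schwarz to $S_{\perp}^{-1/2}e_1$ and $S_{\perp}^{-3/2}e_1$. You also justify $S_{\perp}\succ 0$ explicitly via the Schur complement and treat the case $q_{\parallel}=0$ separately; the paper's proof leaves both points implicit at this stage.
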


\begin{proof}
The diagonal entries are nonnegative, so it suffices that $\det C_2\ge0$.
Using $\bigl(S_{\perp}^{-1}\bigr)_{11}=e_1^{\top}S_{\perp}^{-1}e_1$,
$\|q_{\perp}\|_2^{2}=e_1^{\top}S_{\perp}^{-2}e_1$ from \eqref{eq:u_def}, and
$\sigma_u=\frac{e_1^{\top}S_{\perp}^{-3}e_1}{e_1^{\top}S_{\perp}^{-2}e_1}$ from
\eqref{eq:sigma_g},
\[
\det C_2
=\|q_{\parallel}\|_2^{2}\left[
\bigl(e_1^{\top}S_{\perp}^{-1}e_1\bigr)\,
\frac{e_1^{\top}S_{\perp}^{-3}e_1}{e_1^{\top}S_{\perp}^{-2}e_1}
-e_1^{\top}S_{\perp}^{-2}e_1
\right],
\]
which is nonnegative if and only if
$\bigl(e_1^{\top}S_{\perp}^{-1}e_1\bigr)\bigl(e_1^{\top}S_{\perp}^{-3}e_1\bigr)
\ge\bigl(e_1^{\top}S_{\perp}^{-2}e_1\bigr)^{2}$. By setting
$x=S_{\perp}^{-\tfrac{1}{2}}e_1$ and $y=S_{\perp}^{-\tfrac{3}{2}}e_1$, we have
$\|x\|_2^{2}\,\|y\|_2^{2}\ge\langle x,y\rangle^{2}$, using the Cauchy-Schwarz inequality.
\end{proof}

\begin{proposition}[Structured square root of $\Sigma_m$]
\label{prop:sampler}

Let denote by:
\begin{equation}
M_1=Q_mT_m^{-\frac{1}{2}}, \, \ M_2=[\,\hat q\ \ u\,]\,C_2^{\frac{1}{2}}
\, \  \text{and} \, \  M_3=\sqrt{\omega(m)}\;P_{\mathrm{rest}},
\label{eq:factors}
\end{equation}
of sizes $d\times m$, $d\times2$ and $d\times d$ respectively, and let
$G=[\,M_1\ M_2\ M_3\,]$. Then $GG^{\top}=\Sigma_m$. For independent $z_1\sim\mathcal{N}(0,I_m)$, $z_c\sim\mathcal{N}(0,I_2)$ and $z_2\sim\mathcal{N}(0,I_d)$, the vector
\begin{equation}
\theta
=\theta_{\star}
+Q_mT_m^{-\frac{1}{2}}z_1
+[\,\hat q\ \ u\,]\,C_2^{\frac{1}{2}}z_c
+\sqrt{\omega(m)}\,\bigl(z_2-Q_m(Q_m^{\top}z_2)-u\,(u^{\top}z_2)\bigr)
\label{eq:sampler}
\end{equation}
is an exact sample from $\mathcal{N}(\theta_{\star},\Sigma_m)$.
\end{proposition}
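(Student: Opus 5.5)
The plan is to expand $GG^{\top}=M_1M_1^{\top}+M_2M_2^{\top}+M_3M_3^{\top}$ and match each of the three products against the corresponding group of terms in \eqref{eq:Sigma_coupled}. The sampling statement then follows from the general fact recalled before the proposition: $\theta_\star+Gz$ with $z=(z_1,z_c,z_2)\sim\mathcal{N}(0,I_{m+2+d})$ is Gaussian with covariance $GG^{\top}$. Finally, I will check that \eqref{eq:sampler} is literally $\theta_\star+Gz$.

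\textbf{Step 1 (resolved part).} Since $T_m$ is symmetric positive definite, $T_m^{-1/2}$ is well defined and symmetric. Hence $M_1M_1^{\top}=Q_mT_m^{-1/2}T_m^{-1/2}Q_m^{\top}=Q_mT_m^{-1}Q_m^{\top}$.

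\textbf{Step 2 (coupling part).} Lemma~\ref{lem:C2_psd} gives $C_2\succeq0$, so a symmetric PSD square root $C_2^{1/2}$ exists. Writing $W=[\,\hat q\ \ u\,]$, we get $M_2M_2^{\top}=WC_2W^{\top}$. I then expand this entrywise using \eqref{eq:C2}:
\[
WC_2W^{\top}=(S_\perp^{-1})_{11}\|q_\parallel\|_2^2\,\hat q\hat q^{\top}-\|q_\parallel\|_2\|q_\perp\|_2(\hat qu^{\top}+u\hat q^{\top})+\sigma_u uu^{\top}.
\]
Substituting $\|q_\parallel\|_2\hat q=q_\parallel$ and $\|q_\perp\|_2u=q_\perp$ turns this into exactly $K$ in \eqref{eq:Kblock}. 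The one degenerate case is $q_\parallel=0$, which occurs when $\beta_m=0$. Then $\hat q$ is undefined, but the first row and column of $C_2$ vanish, so any unit vector in $\mathrm{range}(P_m)$ can stand in for $\hat q$ and the identity still holds. Note also that $q_\perp\neq0$ always, because $S_\perp^{-1}$ is invertible, so $u$ is well defined.

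\textbf{Step 3 (bulk part).} This is the only step with real content. I will show that $P_{\mathrm{rest}}$ is a symmetric idempotent. Symmetry is clear. For idempotence, $u\in\mathrm{range}(P_m^\perp)$ is a unit vector, so $Q_m^{\top}u=0$. Therefore $Q_mQ_m^{\top}$ and $uu^{\top}$ are mutually orthogonal projectors, and $I-Q_mQ_m^{\top}-uu^{\top}$ is the orthogonal projector onto $(\mathrm{range}\,Q_m\oplus\mathrm{span}\,u)^\perp$. Thus $M_3M_3^{\top}=\omega(m)P_{\mathrm{rest}}^2=\omega(m)P_{\mathrm{rest}}$.

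This step needs $\omega(m)\ge0$ so that $\sqrt{\omega(m)}$ is real. I would argue it via $\sigma_u=u^{\top}Q_{m,\perp}S_\perp^{-1}Q_{m,\perp}^{\top}u\le\lambda_{\max}(S_\perp^{-1})\le\mathrm{tr}(S_\perp^{-1})$. The first inequality is the Rayleigh-quotient bound with $Q_{m,\perp}^{\top}u$ a unit vector, and the second holds because $S_\perp^{-1}\succ0$. Summing Steps 1--3 gives $\Sigma_m$.

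\textbf{Step 4 (sampler).} I expand $Gz=M_1z_1+M_2z_c+M_3z_2$ and use $P_{\mathrm{rest}}z_2=z_2-Q_m(Q_m^{\top}z_2)-u(u^{\top}z_2)$. This recovers \eqref{eq:sampler}. Because $z$ is Gaussian and $G$ is deterministic, $\theta$ is exactly $\mathcal{N}(\theta_\star,\Sigma_m)$.

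I expect the main obstacle to be bookkeeping rather than depth: keeping the cross terms in Step 2 correctly signed and normalized, and handling the degenerate cases ($\beta_m=0$, and $d-m-1=0$, where $P_{\mathrm{rest}}=0$ and $\omega$ becomes irrelevant).
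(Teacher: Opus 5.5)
Your proposal is correct and takes essentially the same route as the paper. You split $GG^{\top}=M_1M_1^{\top}+M_2M_2^{\top}+M_3M_3^{\top}$ and match the resolved, coupling (via Lemma~\ref{lem:C2_psd} and the substitutions $\|q_{\parallel}\|_2\hat q=q_{\parallel}$, $\|q_{\perp}\|_2u=q_{\perp}$) and bulk (via idempotence of $P_{\mathrm{rest}}$) pieces against \eqref{eq:Sigma_coupled}, then read off the sampler as $\theta_\star+Gz$. Your extra checks are also correct and cover points the paper's proof leaves implicit: that $\omega(m)\ge0$, so $M_3$ is real, and the degenerate cases $\beta_m=0$ and $d-m-1=0$.
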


\begin{proof}
For the block matrix $G=[\,M_1\ M_2\ M_3\,]$, we have
$GG^{\top}=M_1M_1^{\top}+M_2M_2^{\top}+M_3M_3^{\top}$, so we just have  to match each factor with a corresponding  part in \eqref{eq:Sigma_coupled}.
\begin{itemize}

 \item For the \emph{Resolved}, we have 
$M_1M_1^{\top}=Q_mT_m^{-\frac{1}{2}}T_m^{-\frac{1}{2}}Q_m^{\top}=Q_mT_m^{-1}Q_m^{\top}$.

\item For the \emph{Coupling}, by writing
$C_2=\begin{psmallmatrix}a&b\\ b&c\end{psmallmatrix}$ with
$a=(S_{\perp}^{-1})_{11}\|q_{\parallel}\|_2^{2}$, $b=-\|q_{\parallel}\|_2\|q_{\perp}\|_2$, $c=\sigma_u$,
\[
M_2M_2^{\top}
=[\,\hat q\ u\,]\,C_2^{\frac{1}{2}}\bigl(C_2^{\frac{1}{2}}\bigr)^{\top}[\,\hat q\ u\,]^{\top}
=[\,\hat q\ u\,]\,C_2\,[\,\hat q\ u\,]^{\top}
=a\,\hat q\hat q^{\top}+b\,\hat q u^{\top}+b\,u\hat q^{\top}+c\,uu^{\top}.
\]
by Substituting $\hat q=\frac{q_{\parallel}}{\|q_{\parallel}\|_2}$ and $u=\frac{q_{\perp}}{\|q_{\perp}\|_2}$ term wise we get:
$a\,\hat q\hat q^{\top}=(S_{\perp}^{-1})_{11}\,q_{\parallel} q_{\parallel}^{\top}$ and 
$b\,\hat q u^{\top}
=-\|q_{\parallel}\|_2\|q_{\perp}\|_2\,\tfrac{q_{\parallel}}{\|q_{\parallel}\|_2}\tfrac{q_{\perp}^{\top}}{\|q_{\perp}\|_2}=-q_{\parallel}q_{\perp}^{\top}$,
and its transpose $-q_{\perp} q_{\parallel}^{\top}$; and $c\,uu^{\top}=\sigma_u\,uu^{\top}$. Hence
$M_2M_2^{\top}=K$, the boundary-coupling block \eqref{eq:Kblock} of
\eqref{eq:Sigma_coupled}, the square root $C_2^{\frac{1}{2}}$ exists by
Lemma~\ref{lem:C2_psd}.

\item Concerning the \emph{Bulk},  $P_{\mathrm{rest}}$ is a symmetric projector
($P_{\mathrm{rest}}^{\top}=P_{\mathrm{rest}}$,
$P_{\mathrm{rest}}^{2}=P_{\mathrm{rest}}$, since $Q_m$ and $u$ are
orthonormal), so
$M_3M_3^{\top}=\omega(m)\,P_{\mathrm{rest}}^{2}=\omega(m)\,P_{\mathrm{rest}}$.
\end{itemize}
Summing the three terms gives $GG^{\top}=\Sigma_m$. The sampling claim follows
since $\mathrm{Cov}(Gz)=GG^{\top}$ for
$z=(z_1,z_c,z_2)\sim\mathcal{N}(0,I_{m+2+d})$ with independent blocks.
\end{proof}

\begin{remark}
\label{rem:sampler_reading}

The three terms of \eqref{eq:sampler} are the three pieces of
\eqref{eq:Sigma_coupled}. The first draws from the resolved directions. For the second term,  $C_2^{1/2} z_c$ is a $2$-dimensional vector, and its two entries scale $\hat{d}$ (in the resolved subspace) and $u$ (in the complement). Because both come out of the same square root, they come from  the negative correlation sitting in the off-diagonal of $C_2$ the resolved-complement coupling, now realized sample by sample rather than assembled. The final term applies the bulk noise with two matrix-free projections, removing the resolved component and the coupling direction $u$ so that the isotropic variance is placed only on the $(d-m-1)$-dimensional remaining complement bulk. Every quantity in
\eqref{eq:sampler} is available matrix-free: $\hat q$ from $Q_m$, $T_m$ and $\beta_m$; the scalars of $C_2$ from a single reprojected Lanczos run started at the boundary direction $e_1$ (Section~\ref{sec:pslq}), $u$ from one conjugate-gradient solve with the projected operator $P_m^{\perp}AP_m^{\perp}$, and $\omega(m)$ from P-SLQ. Setting $\beta_m=0$ reduces \eqref{eq:sampler} to the two-term sampler of the decoupled correction.
\end{remark}

\subsection{Derivation of the Complement Variance
\texorpdfstring{$\omega(m)$}{omega(m)}}

We work with the residual against the true inverse, $R_m = A^{-1} - f_m(A)$, where $f_m(A)=Q_mT_m^{-1}Q_m^{\top}$ is the $m$-step Lanczos approximation of $A^{-1}$. The following lemma shows  this residual lives in  the unresolved subspace, identifies its complement block as the Schur complement $S_{\perp}^{-1}$, and shows the discarded variance is exactly $\mathrm{tr}(S_{\perp}^{-1})$,  the quantity our P-SLQ estimator (Section~\ref{sec:pslq}) computes matrix-free.

\begin{lemma}[Complement confinement of residual]
\label{lem:bias_decomp}
Let $f_m(A)=Q_mT_m^{-1}Q_m^{\top}$ and $R_m=A^{-1}-f_m(A)$. Then we have 
\begin{enumerate}[label=\textup{(\roman*)}, leftmargin=2em]
\item \textbf{The resolved approximation lives in the Krylov subspace.}
\begin{equation}
f_m(A)=Q_mT_m^{-1}Q_m^{\top},
\qquad
P_m^{\perp}f_m(A)=0,
\qquad
P_m^{\perp}f_m(A)\,P_m^{\perp}=0 .
\label{eq:fm_in_krylov}
\end{equation}
\item \textbf{Complement confinement and trace formula.} The compression of $R_m$ to the unresolved subspace equals the true complement block of $A^{-1}$,
\begin{equation}
P_m^{\perp} R_m\, P_m^{\perp}
= Q_{m,\perp} S_{\perp}^{-1} Q_{m,\perp}^{\top},
\qquad
\mathrm{tr}\!\bigl(P_m^{\perp} R_m\, P_m^{\perp}\bigr)
= \mathrm{tr}\!\bigl(S_{\perp}^{-1}\bigr),
\label{eq:trace_formula}
\end{equation}
where $S_{\perp}=T_{m,\perp}-\gamma\,e_1e_1^{\top}$ and
$\gamma=\beta_m^{2}(T_m^{-1})_{mm}$ as in \eqref{eq:Sperp}.
\item \textbf{Per-dimension complement variance.} Excluding the coupling
direction $u=\frac{q_{\perp}}{\|q_{\perp}\|_2}$, which is retained exactly with variance $\sigma_u$ \eqref{eq:sigma_g}, the isotropic bulk magnitude is
\begin{equation}
\omega(m)
=\frac{\mathrm{tr}\!\bigl(S_{\perp}^{-1}\bigr)-\sigma_u}{d-m-1},
\label{eq:omega_def}
\end{equation}
the mean inverse eigenvalue of $S_{\perp}$ on the coupling-free bulk.
\end{enumerate}
\end{lemma}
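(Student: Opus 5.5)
The three parts follow from the exact block decomposition already established in Proposition~\ref{prop:block_inverse}, so the plan is mostly bookkeeping with projectors and traces.

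For part (i), the identity $f_m(A)=Q_mT_m^{-1}Q_m^{\top}$ is the definition \eqref{eq:lanczos_inverse}. The two vanishing statements follow from $P_m^{\perp}Q_m=(I_d-Q_mQ_m^{\top})Q_m=Q_m-Q_m=0$, using $Q_m^{\top}Q_m=I_m$. Hence $P_m^{\perp}f_m(A)=0$, and right-multiplying by $P_m^{\perp}$ gives the third identity.

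For part (ii), I will apply $P_m^{\perp}(\cdot)P_m^{\perp}$ to $R_m=A^{-1}-f_m(A)$. By (i) this kills $f_m(A)$, so $P_m^{\perp}R_mP_m^{\perp}=P_m^{\perp}A^{-1}P_m^{\perp}$. Fix the complement basis with first column $q_{m+1}$, as in Lemma~\ref{lem:rankone_coupling}; this is legitimate because $P_m^{\perp}=Q_{m,\perp}Q_{m,\perp}^{\top}$ does not depend on the choice of basis. Then
\[
P_m^{\perp}A^{-1}P_m^{\perp}=Q_{m,\perp}\bigl(Q_{m,\perp}^{\top}A^{-1}Q_{m,\perp}\bigr)Q_{m,\perp}^{\top},
\]
and part (i) of Proposition~\ref{prop:block_inverse} replaces the middle factor by $S_{\perp}^{-1}$. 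The trace identity then follows exactly as in the computation after \eqref{eq:trace-id}: cyclicity of the trace together with $Q_{m,\perp}^{\top}Q_{m,\perp}=I_{d-m}$. One could also read this off \eqref{eq:Ainv_exact} directly, since $P_m^{\perp}$ annihilates $q_{\parallel}$, so every coupling term except $Q_{m,\perp}S_{\perp}^{-1}Q_{m,\perp}^{\top}$ drops out.

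For part (iii), formula \eqref{eq:omega_def} is the definition \eqref{eq:omega_rest}, so what needs proof is the interpretation as a mean of inverse eigenvalues on the bulk. Set $B=Q_{m,\perp}S_{\perp}^{-1}Q_{m,\perp}^{\top}$. By construction $\sigma_u=u^{\top}Bu$ with $u\in\mathrm{range}(P_m^{\perp})$ a unit vector. Choose an orthonormal basis $u,w_1,\dots,w_{d-m-1}$ of $\mathrm{range}(P_m^{\perp})$. Then
\[
\mathrm{tr}(B)=\sigma_u+\sum_i w_i^{\top}Bw_i=\sigma_u+\mathrm{tr}\bigl(P_{\mathrm{rest}}BP_{\mathrm{rest}}\bigr),
\]
where $P_{\mathrm{rest}}=P_m^{\perp}-uu^{\top}$ is the orthogonal projector onto the $(d-m-1)$-dimensional bulk. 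Therefore $\omega(m)=\mathrm{tr}(P_{\mathrm{rest}}BP_{\mathrm{rest}})/(d-m-1)$, which is the average eigenvalue of the compression of $S_{\perp}^{-1}$ to the coupling-free bulk. This also makes the relation $\sigma_u+(d-m-1)\omega(m)=\mathrm{tr}(S_{\perp}^{-1})$ explicit.

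The only real subtlety is in (iii), in the phrase ``mean inverse eigenvalue of $S_{\perp}$.'' It is literally true only if $u$ is an eigenvector of $S_{\perp}^{-1}$ after mapping to the complement coordinates. In general it is true for the eigenvalues of the compression $P_{\mathrm{rest}}BP_{\mathrm{rest}}$, and I will state it in that sense. This also requires $u$ to be well defined, i.e. $q_{\perp}\neq 0$, which holds because $S_{\perp}$ is invertible and so $S_{\perp}^{-1}e_1\neq 0$. Finally, we need $d-m-1\ge 1$ so that the denominator is positive.
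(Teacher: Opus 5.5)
Your proposal is correct and follows the paper's proof. Parts (i) and (ii) are the same argument: $P_m^{\perp}Q_m=0$, the Schur-complement identity $Q_{m,\perp}^{\top}A^{-1}Q_{m,\perp}=S_{\perp}^{-1}$, and trace cyclicity. For (iii) the paper only distributes $\mathrm{tr}(S_{\perp}^{-1})-\sigma_u$ over the $d-m-1$ bulk directions, and your trace-splitting over an orthonormal basis $u,w_1,\dots,w_{d-m-1}$ is exactly what the paper makes rigorous later in Proposition~\ref{prop:total_quality}(ii) as $\omega(m)=\mathrm{tr}(D)/(d-m-1)$; your caveat that ``mean inverse eigenvalue'' means the mean eigenvalue of the compression to the bulk is the same reading.
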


\begin{proof}
\emph{(i) The resolved approximation lives in the Krylov subspace.}
Since the Lanczos vectors are orthonormal, $Q_m^{\top}Q_m=I_m$, the approximation $f_m(A)=Q_mT_m^{-1}Q_m^{\top}$ has range in $\mathrm{range}(Q_m)$. As $P_m^{\perp}=I_d-Q_mQ_m^{\top}$ satisfies $P_m^{\perp}Q_m=0$, we obtain $P_m^{\perp}f_m(A)=0$ and hence $P_m^{\perp}f_m(A)P_m^{\perp}=0$, which is \eqref{eq:fm_in_krylov}.

\emph{(ii) Complement confinement and trace formula.}
By \eqref{eq:fm_in_krylov}, $P_m^{\perp}f_m(A)P_m^{\perp}=0$, so
\[
P_m^{\perp}R_mP_m^{\perp}
=P_m^{\perp}A^{-1}P_m^{\perp}-P_m^{\perp}f_m(A)P_m^{\perp}
=P_m^{\perp}A^{-1}P_m^{\perp}.
\]
Writing $P_m^{\perp}=Q_{m,\perp}Q_{m,\perp}^{\top}$ and using the Schur complement of $T_m$, which by \eqref{eq:complement_block} gives
$Q_{m,\perp}^{\top}A^{-1}Q_{m,\perp}=S_{\perp}^{-1}$,
\[
P_m^{\perp}A^{-1}P_m^{\perp}
=Q_{m,\perp}\bigl(Q_{m,\perp}^{\top}A^{-1}Q_{m,\perp}\bigr)Q_{m,\perp}^{\top}
=Q_{m,\perp}S_{\perp}^{-1}Q_{m,\perp}^{\top},
\]
the first identity in \eqref{eq:trace_formula}. Taking the trace and using cyclicity with $Q_{m,\perp}^{\top}Q_{m,\perp}=I_{d-m}$,
\[
\mathrm{tr}\!\bigl(Q_{m,\perp}S_{\perp}^{-1}Q_{m,\perp}^{\top}\bigr)
=\mathrm{tr}\!\bigl(S_{\perp}^{-1}Q_{m,\perp}^{\top}Q_{m,\perp}\bigr)
=\mathrm{tr}\!\bigl(S_{\perp}^{-1}\bigr),
\]
giving the trace formula.

\emph{(iii) Per-dimension complement variance.}
The complement block $Q_{m,\perp}S_{\perp}^{-1}Q_{m,\perp}^{\top}$ carries total variance $\mathrm{tr}(S_{\perp}^{-1})$. Removing the exactly retained variance $\sigma_u$ along the coupling direction $u$ and distributing the remainder over the $d-m-1$ coupling-free bulk directions gives \eqref{eq:omega_def}.
\end{proof}

\begin{corollary}[Decoupled special case]
\label{cor:beta_zero_lemma}
If $\beta_m=0$ then $\gamma=0$, $S_{\perp}=T_{m,\perp}$, the coupling vectors
$q_{\parallel},q_{\perp}$ and the direction $u$ vanish, and
\eqref{eq:trace_formula}--\eqref{eq:omega_def} reduce to
\begin{equation}
P_m^{\perp}R_mP_m^{\perp}=Q_{m,\perp}T_{m,\perp}^{-1}Q_{m,\perp}^{\top},
\qquad
\omega(m)=\frac{1}{d-m}\,\mathrm{tr}\!\bigl(T_{m,\perp}^{-1}\bigr).
\end{equation}
Thus the discarded variance measured against the compressed operator
$T_{m,\perp}^{-1}$ is exactly the $\beta_m=0$ case of the coupling corrected result.
\end{corollary}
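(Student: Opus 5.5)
The plan is to obtain Corollary~\ref{cor:beta_zero_lemma} by substituting $\beta_m=0$ into the definitions and into Lemma~\ref{lem:bias_decomp}, tracking each object in turn. No new analysis should be needed; the only delicate point is the degenerate direction $u$.

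\emph{Step 1: $\gamma$ and $S_\perp$.} From \eqref{eq:Sperp}, $\gamma=\beta_m^2(T_m^{-1})_{mm}$ vanishes when $\beta_m=0$. Hence $S_\perp=T_{m,\perp}-\gamma e_1e_1^\top=T_{m,\perp}$, as already observed in Remark~\ref{rem:beta_zero}. Lemma~\ref{lem:rankone_coupling} gives $A_{m,\times}=0$ in this case. The Schur complement \eqref{eq:complement_block} is therefore $T_{m,\perp}$ for any orthonormal complement basis, so the conclusion does not depend on the convention that the first column is $q_{m+1}$. This matters because $q_{m+1}$ is not defined when $\beta_m=0$.

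\emph{Step 2: the complement block.} Substituting $S_\perp=T_{m,\perp}$ into the first identity of \eqref{eq:trace_formula} yields $P_m^\perp R_mP_m^\perp=Q_{m,\perp}T_{m,\perp}^{-1}Q_{m,\perp}^\top$. Alternatively, one can read this off the decoupled form of \eqref{eq:Ainv_exact} in Proposition~\ref{prop:block_inverse}. By \eqref{eq:trace-id} its trace is $\mathrm{tr}(T_{m,\perp}^{-1})=\mathrm{tr}(A_{m,\perp}^+)$.

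\emph{Step 3: the coupling vectors and $\omega(m)$.} By \eqref{eq:coupling_pair}, $q_\parallel=\beta_mQ_mT_m^{-1}e_m=0$. The vector $q_\perp=Q_{m,\perp}S_\perp^{-1}e_1$ must vanish in the sense that no boundary direction exists. Taken literally, $q_\perp=Q_{m,\perp}T_{m,\perp}^{-1}e_1\neq0$. The corollary is therefore to be read as follows: when $\beta_m=0$ the coupling block $K$ in \eqref{eq:Kblock} is absent, because $q_\parallel=0$ kills the cross terms. Moreover, no direction is singled out for exact retention, so $u$ is dropped, $\sigma_u$ is not subtracted, and the bulk dimension reverts from $d-m-1$ to $d-m$.

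I will state this convention explicitly: the correction \eqref{eq:Sigma_coupled} with $u$ removed is $\omega(m)(I_d-Q_mQ_m^\top)$. Trace preservation, $(d-m)\,\omega(m)=\mathrm{tr}(S_\perp^{-1})$, then forces $\omega(m)=\mathrm{tr}(T_{m,\perp}^{-1})/(d-m)$. This gives the claimed reduction of \eqref{eq:omega_def}.

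\emph{Main obstacle.} The algebra is trivial. The real difficulty is making precise the meaning of ``$u$ vanishes,'' since $u=q_\perp/\|q_\perp\|$ is formally nonzero and $q_{m+1}$ is undefined. I plan to resolve this by the convention in Step 3, justified by the fact that the boundary coupling is zero. I will also check that this is consistent with the trace-matching principle, which is the defining property of $\omega(m)$ in Lemma~\ref{lem:bias_decomp}(iii).
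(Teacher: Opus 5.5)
Your proposal is correct and follows the argument the paper leaves implicit (it gives no separate proof): substitute $\beta_m=0$ into \eqref{eq:Sperp}, \eqref{eq:coupling_pair} and Lemma~\ref{lem:bias_decomp}, using $A_{m,\times}=0$ so that the Schur complement equals $T_{m,\perp}$ for any complement basis. Your caveat is also well taken: literally $q_{\perp}=Q_{m,\perp}T_{m,\perp}^{-1}e_1\neq0$, so the reduction $\omega(m)=\mathrm{tr}(T_{m,\perp}^{-1})/(d-m)$ holds only under the convention that $u$ and $\sigma_u$ are dropped and the bulk regains dimension $d-m$. One small correction: $q_{\parallel}=0$ alone leaves the $\sigma_u uu^{\top}$ term of $K$ in \eqref{eq:Kblock} in place, so it is this convention, not $q_{\parallel}=0$, that removes $K$ entirely.
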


\subsection{Justification of the Isotropic Complement Approximation}
\label{sec:isotropic_justification}

We show here how to justify this choice, and we observe that the isotropic approximation is exact when the complement spectrum is uniform and the approximation error is controlled  by how far the complement spectrum is widespread from  its mean.

\begin{definition}[Complement condition number]\label{def:kappa}
Let $\mu_1\ge\cdots\ge\mu_{d-m}>0$ be the eigenvalues of $T_{m,\perp}=Q_{m,\perp}^{\top}A Q_{m,\perp}$.
Define
\begin{equation}
\kappa^{\perp}(m)=\frac{\mu_1}{\mu_{d-m}} \ge1 .
\label{eq:kappa}
\end{equation}
This quantity measures the spectral spread of $A$ restricted to 
$\mathcal{S}^\perp = \mathrm{range}(P_m^\perp)$. Note that:
\begin{itemize}
    \item $\kappa_\perp(m) = 1$ if and only if all tail eigenvalues 
    $\mu_{1} = \cdots = \mu_{d-m}$ are equal (perfect uniform complement);
    \item $\kappa_\perp(m) \approx 1$ means the complement spectrum is nearly 
    uniform;
    \item $\kappa_\perp(m) \gg 1$ means the complement is ill-conditioned.
\end{itemize}

This is the condition number of the compressed operator and requires no
eigenvectors of $A$. We have $\kappa^{\perp}(m)=1$ if and only if the compressed complement is perfectly conditioned. In practice $\kappa^{\perp}(m)$ decreases as $m$ grows, with $\kappa^{\perp}(0)=\kappa$,  in deep networks the dominant outlier curvature is resolved first and  we get  $\kappa^{\perp}(m)\to1$.
\end{definition}

\begin{proposition}[Spectral control of the coupling-corrected complement]
\label{prop:kappa_sperp}
Let $\mu_1\ge\cdots\ge\mu_{d-m}>0$ be the eigenvalues of  $T_{m,\perp}$   with orthonormal eigenvectors $w_1,\dots,w_{d-m}$, and let
$S_{\perp}=T_{m,\perp}-\gamma\,e_1e_1^{\top}$ with
$\gamma=\beta_m^{2}(T_m^{-1})_{mm}\ge0$. Let 
$\mu_1^{S}\ge\cdots\ge\mu_{d-m}^{S}$ for the eigenvalues of $S_{\perp}$ and $\lambda_1\ge\cdots\ge\lambda_d>0$ for  $A$. Then we have
\begin{enumerate}[label=\textup{(\roman*)}, leftmargin=2.2em]
\item \textbf{Interlacing.}
$\mu_j\ge\mu_j^{S}\ge\mu_{j+1}$ for $1\le j\le d-m$
\textup{(}with $\mu_{d-m+1}=0$\textup{)}. In particular
$\mu_1^{S}\le\mu_1$ and each eigenvalue moves down by at most one index.
\item \textbf{Well-posedness.}
$S_{\perp}\succ0$ if and only if $\gamma\,(T_{m,\perp}^{-1})_{11}<1$, and this condition holds automatically here, since $S_{\perp}^{-1}
=Q_{m,\perp}^{\top}A^{-1}Q_{m,\perp}$ by \eqref{eq:complement_block}.
\item \textbf{Spectral floor.}
$\mu_{d-m}^{S}\ge\lambda_d$.
\item \textbf{Condition number.} By defining
$\kappa_{S}^{\perp}(m) =\frac{\mu_1^{S}}{\mu_{d-m}^{S}}$, we have
\begin{equation}
\kappa_{S}^{\perp}(m)\;\le\;\frac{\mu_1}{\lambda_d}\;\le\;\kappa(A).
\label{eq:kappaS_bound}
\end{equation}
The lower bound $\mu_{d-m}^{S}\ge\lambda_d$ replaces the compressed floor $\mu_{d-m}$ and so  $\kappa_{S}^{\perp}(m)$ may exceed
$\kappa^{\perp}(m)$, and \eqref{eq:kappaS_bound} is the operative bound.
\item \textbf{Bulk block of the inverse.} Let $u=\frac{q_{\perp}}{\|q_{\perp}\|_2}$ with coordinates $\hat u\in\R^{d-m}$, and let
$V\in\R^{(d-m)\times(d-m-1)}$ be an isometry whose columns span
$\hat u^{\perp} =
\left\{
x\in\mathbb{R}^{d-m}:x^{\top}\hat u=0
\right\}$. The eigenvalues $\delta_1\ge\cdots\ge\delta_{d-m-1}$ of
$D=V^{\top}S_{\perp}^{-1}V$ interlace those of $S_{\perp}^{-1}$; hence
$\delta_i\in[\,\frac{1}{\mu_1^{S}},\,\frac{1}{\mu_{d-m}^{S}}\,]$ and
$\frac{\delta_1}{\delta_{d-m-1}}\le\kappa_S^{\perp}(m)$.

\end{enumerate}
\end{proposition}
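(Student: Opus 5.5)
The plan is to treat the five items in order, since each feeds the next. Throughout, $S_{\perp}=T_{m,\perp}-\gamma e_1e_1^{\top}$ is a rank-one negative semidefinite perturbation of $T_{m,\perp}$, and $S_{\perp}^{-1}=Q_{m,\perp}^{\top}A^{-1}Q_{m,\perp}$ by \eqref{eq:complement_block}.

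\textbf{Item (i).} I will invoke Weyl's interlacing theorem for rank-one updates. Since $\gamma\ge0$, subtracting $\gamma e_1e_1^{\top}$ pushes every eigenvalue down, but by at most one index: $\mu_j\ge\mu_j^{S}\ge\mu_{j+1}$. I prove this via Courant--Fischer.
\begin{itemize}
\item Upper bound: $x^{\top}S_{\perp}x\le x^{\top}T_{m,\perp}x$ for all $x$, so $\mu_j^{S}\le\mu_j$.
\item Lower bound: restrict the max--min for $\mu_j^{S}$ to subspaces inside $e_1^{\perp}$. There the two quadratic forms coincide, and a $j$-dimensional subspace of $e_1^{\perp}$ can be chosen to capture $\mu_{j+1}$, since intersecting a $(j+1)$-dimensional subspace with $e_1^{\perp}$ loses at most one dimension.
\end{itemize}
The convention $\mu_{d-m+1}=0$ only matters at the last index. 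There, positivity comes from (ii) rather than from interlacing.

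\textbf{Item (ii).} This is the matrix determinant lemma:
\[
\det S_{\perp}=\det T_{m,\perp}\bigl(1-\gamma (T_{m,\perp}^{-1})_{11}\bigr).
\]
Combined with (i), at most the smallest eigenvalue can cross zero. Hence $S_{\perp}\succ0$ if and only if the bracket is positive.

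Automatic validity follows from a different route. $S_{\perp}$ is the Schur complement of the SPD block $T_m$ in the SPD matrix $[Q_m\ Q_{m,\perp}]^{\top}A[Q_m\ Q_{m,\perp}]$, so it is SPD. Equivalently, its inverse is a compression of $A^{-1}\succ0$.

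\textbf{Items (iii) and (iv).} Because $S_{\perp}^{-1}=Q_{m,\perp}^{\top}A^{-1}Q_{m,\perp}$ is a compression of $A^{-1}$, Poincaré separation (as in \eqref{eq:interlace}) gives $\lambda_{\max}(S_{\perp}^{-1})\le\lambda_{\max}(A^{-1})=1/\lambda_d$. Hence $\mu_{d-m}^{S}\ge\lambda_d$, which is (iii).

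Then (iv) follows directly:
\[
\kappa_S^{\perp}(m)=\frac{\mu_1^{S}}{\mu_{d-m}^{S}}\le\frac{\mu_1}{\lambda_d}\le\frac{\lambda_1}{\lambda_d}.
\]
The first step uses (i) and (iii); the last uses $\mu_1\le\lambda_1$ from \eqref{eq:interlace}.

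The remark that $\kappa_S^{\perp}$ may exceed $\kappa^{\perp}$ just notes that $\mu^{S}_{d-m}$ can fall below $\mu_{d-m}$. A one-line observation suffices here; no separate proof is needed.

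\textbf{Item (v).} $D=V^{\top}S_{\perp}^{-1}V$ is a compression of the SPD matrix $S_{\perp}^{-1}$ onto a subspace of codimension one. Cauchy interlacing then gives $\lambda_{i+1}(S_{\perp}^{-1})\le\delta_i\le\lambda_i(S_{\perp}^{-1})$.

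It follows that every $\delta_i$ lies in $[1/\mu_1^{S},\,1/\mu_{d-m}^{S}]$. The ratio bound comes from bounding the largest $\delta_1$ and smallest $\delta_{d-m-1}$ by the endpoints of this interval.

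\textbf{Main obstacle.} I expect the only delicate point to be the lower half of (i) with the boundary convention. The clean route is the Courant--Fischer restriction to $e_1^{\perp}$. I will be careful that strict positivity of $\mu^{S}_{d-m}$ is taken from (ii)/(iii), not from interlacing.
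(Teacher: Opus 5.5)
Your proof is correct. For items (iii)--(v) it coincides with the paper's. Both use that $S_{\perp}^{-1}=Q_{m,\perp}^{\top}A^{-1}Q_{m,\perp}$ is a compression of $A^{-1}$ to get $\mu^S_{d-m}\ge\lambda_d$, combine this with $\mu_1^S\le\mu_1\le\lambda_1$, and apply Cauchy interlacing to $D=V^{\top}S_{\perp}^{-1}V$.

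The differences are in (i) and (ii), and there your version is the more rigorous one.
\begin{itemize}
\item For (i), the paper runs a homotopy $S(t)=T_{m,\perp}-t\gamma e_1e_1^{\top}$ and uses the eigenvalue derivative $-\gamma(v_j(t)^{\top}e_1)^2\le0$. This yields only the Weyl bounds $\mu_j-\gamma\le\mu_j^S\le\mu_j$. The paper then asserts, without carrying it out, that rank-one interlacing sharpens these to $\mu_j\ge\mu_j^S\ge\mu_{j+1}$. Your Courant--Fischer argument proves the sharp interlacing directly: compare the quadratic forms for the upper bound, and restrict to $\mathrm{span}\{w_1,\dots,w_{j+1}\}\cap e_1^{\perp}$ (dimension at least $j$) for the lower bound. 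It also avoids differentiability questions at eigenvalue crossings.
\item You correctly flag that the $j=d-m$ case, under the convention $\mu_{d-m+1}=0$, is a positivity statement that interlacing cannot supply. The paper's proof of (i) does not address this.
\item For (ii), the paper justifies the equivalence only by the vague remark that ``every leading principal structure is preserved.'' You instead use (i) for $j\le d-m-1$ to show that only $\mu^S_{d-m}$ can fail to be positive. Then the sign of $\det S_{\perp}=\det T_{m,\perp}\bigl(1-\gamma(T_{m,\perp}^{-1})_{11}\bigr)$ decides definiteness, which makes the ``if and only if'' airtight.
\item There is no circularity: the last-index case of (i) is the only thing you take from (ii).
\end{itemize}

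In short, the paper's route is shorter because it leans on cited interlacing facts, while yours is self-contained.
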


\begin{proof}
\emph{(i)} For $t\in[0,1]$ set $S(t)=T_{m,\perp}-t\gamma\,e_1e_1^{\top}$. Each eigenvalue $\mu_j(t)$ is nonincreasing in $t$, since $\frac{d}{dt}\mu_j(t)=-\gamma\,(v_j(t)^{\top}e_1)^{2}\le0$ for a unit eigenvector $v_j(t)$ \citep[Thm.~8.1.8]{golub2013matrix}. Hence $S(1)=S_{\perp}$ is a rank-one perturbation of $S(0)=T_{m,\perp}$ of magnitude $\gamma$, so by Weyl's inequalities $\mu_j-\gamma\le\mu_j^{S}\le\mu_j$, combined with the monotonicity and the Cauchy interlacing for the rank-one term this sharpens to $\mu_j\ge\mu_j^{S}\ge\mu_{j+1}$.

\emph{(ii)} By the matrix determinant lemma,

\begin{align*}
\det S_{\perp}&=\det(T_{m,\perp})\,(1-\gamma\,e_1^{\top}T_{m,\perp}^{-1}e_1) \\
&=\det(T_{m,\perp})\,(1-\gamma\,(T_{m,\perp}^{-1})_{11}),
\end{align*}
 and every leading principal structure is preserved under the rank-one downdate, so $S_{\perp}\succ0\iff\gamma\,(T_{m,\perp}^{-1})_{11}<1$. Since $S_{\perp}^{-1}=Q_{m,\perp}^{\top}A^{-1}Q_{m,\perp}$ is a principal compression of the positive definite $A^{-1}$, it is positive definite, so $S_{\perp}\succ0$ and the condition holds.

\emph{(iii)} From $S_{\perp}^{-1}=Q_{m,\perp}^{\top}A^{-1}Q_{m,\perp}$ and $\|A^{-1}\|_2=\lambda_d^{-1}$, since $A^{-1}$ is symmetric, the Rayleigh quotient of the compression obeys for any nonzero $y$,
\[
\frac{y^{\top}S_{\perp}^{-1}y}{y^{\top}y}
=
\frac{(Q_{m,\perp}y)^{\top}A^{-1}(Q_{m,\perp}y)}
{(Q_{m,\perp}y)^{\top}(Q_{m,\perp}y)},
\]
thus, 
\[
\|S_{\perp}^{-1}\|_2
=
\lambda_{\max}(S_{\perp}^{-1})
=
\max_{y\neq0}
\frac{y^{\top}S_{\perp}^{-1}y}{y^{\top}y}
\le
\max_{x\neq0}
\frac{x^{\top}A^{-1}x}{x^{\top}x}
=
\lambda_{\max}(A^{-1})
=
\|A^{-1}\|_2.
\]

So $\|S_{\perp}^{-1}\|_2\le\|A^{-1}\|_2=\lambda_d^{-1}$, i.e.
$\mu_{d-m}^{S}=\|S_{\perp}^{-1}\|_2^{-1}\ge\lambda_d$.

\emph{(iv)} Combine $\mu_1^{S}\le\mu_1$ from (i) with $\mu_{d-m}^{S}\ge\lambda_d$ from (iii). The Poincar\'e separation theorem applied to the compression $T_{m,\perp}=Q_{m,\perp}^{\top}AQ_{m,\perp}$ gives $\mu_1\le\lambda_1$, whence
$\frac{\mu_1}{\lambda_d}\le\frac{\lambda_1}{\lambda_d}=\kappa(A)$. That
$\kappa_S^{\perp}$ may exceed $\kappa^{\perp}$ follows because the floor drops from $\mu_{d-m}$ to $\lambda_d\le\mu_{d-m}$.

\emph{(v)} As $D=V^{\top}S_{\perp}^{-1}V$ is a compression of $S_{\perp}^{-1}$ by an  isometry, so  by using Cauchy interlacing,  its eigenvalues  are between the extreme  eigenvalues $1/\mu_1^{S}$ and $1/\mu_{d-m}^{S}$ of $S_{\perp}^{-1}$, so  
$\delta_i\in[\,\frac{1}{\mu_1^{S}},\,\frac{1}{\mu_{d-m}^{S}}\,]$ and
$\frac{\delta_1}{\delta_{d-m-1}}\le\kappa_S^{\perp}(m)$ hold.
\end{proof}

\begin{remark}[Monotonicity of $\kappa^{\perp}(m)$]
\label{rem:monotone_kappa}
Write $T_{m,\perp}=Q_{m,\perp}^{\top}AQ_{m,\perp}$ for the compression of $A$ to the complement at step $m$, with eigenvalues
$\mu_1(m)\ge\cdots\ge\mu_{d-m}(m)>0$ and
$\kappa^{\perp}(m)=\frac{\mu_1(m)}{\mu_{d-m}(m)}$. In exact arithmetic, $\kappa^{\perp}(m)$ is non-increasing in $m$, with
\begin{equation}
\kappa^{\perp}(0)=\kappa,
\qquad
\kappa^{\perp}(m+1)\ \le\ \kappa^{\perp}(m),
\qquad
\kappa^{\perp}(m)\longrightarrow 1 \quad\text{as } m\to d-1 .
\label{eq:kappa_monotone}
\end{equation}
Thus the complement becomes better conditioned as more Krylov directions are resolved, starting from the full condition number $\kappa$ and contracting toward $1$. In deep networks the dominant outlier eigenvalues are resolved first, so the decrease is rapid once the outliers leave the complement. (The statement is for exact arithmetic, in finite precision the loss of orthogonality can perturb the nesting below and produce small fluctuations.)
\end{remark}

\begin{proof}
\emph{Endpoints.} At $m=0$ the complement is the whole space, $A_{\perp}(0)=A$, so $\kappa^{\perp}(0)=\frac{\lambda_1}{\lambda_d}=\kappa$. As $m\to d-1$ the complement has dimension $d-m\to1$, so $T_{m,\perp}$ is a positive scalar and $\kappa^{\perp}(m)=1$.

The Krylov subspaces are nested, $\mathcal{K}_m(A,b)\subseteq\mathcal{K}_{m+1}(A,b)$, hence their orthogonal
complements are nested in reverse, $\mathrm{range}(Q_{m+1}^{\perp})\subset\mathrm{range}Q_{m,\perp}$, with codimension one. Every column of $Q_{m+1}^{\perp}$ therefore lies in $\mathrm{range}Q_{m,\perp}$, so $Q_{m+1}^{\perp}=Q_{m,\perp}Z$ for some $Z\in\mathbb{R}^{(d-m)\times(d-m-1)}$. Since both bases have orthonormal columns,
\[
Z^{\top}Z
=Z^{\top}Q_{m,\perp}^{\top}Q_{m,\perp}Z
=(Q_{m+1}^{\perp})^{\top}Q_{m+1}^{\perp}
=I_{d-m-1},
\]
so $Z$ is an isometry, and
\[
A_{\perp}(m+1)
=(Q_{m+1}^{\perp})^{\top}AQ_{m+1}^{\perp}
=Z^{\top}\bigl(Q_{m,\perp}^{\top}AQ_{m,\perp}\bigr)Z
=Z^{\top}T_{m,\perp}\,Z
\]
is the compression of the symmetric matrix $T_{m,\perp}$ to a subspace of codimension one.

\emph{Interlacing.} By Cauchy's interlacing theorem, the eigenvalues
$\mu_j(m+1)$ of $A_{\perp}(m+1)$ interlace those of $T_{m,\perp}$,
\[
\mu_{j+1}(m)\ \le\ \mu_j(m+1)\ \le\ \mu_j(m),
\qquad j=1,\dots,d-m-1 .
\]
Taking $j=1$ gives $\mu_1(m+1)\le\mu_1(m)$; taking $j=d-m-1$ gives
$\mu_{d-m-1}(m+1)\ge\mu_{d-m}(m)$. Hence the largest complement eigenvalue is non-increasing and the smallest is non-decreasing.

\emph{Ratio.} Combining the two extreme bounds,
\[
\kappa^{\perp}(m+1)
=\frac{\mu_1(m+1)}{\mu_{d-m-1}(m+1)}
\ \le\ \frac{\mu_1(m)}{\mu_{d-m-1}(m+1)}
\ \le\ \frac{\mu_1(m)}{\mu_{d-m}(m)}
=\kappa^{\perp}(m),
\]
where the first inequality uses $\mu_1(m+1)\le\mu_1(m)$ and the second uses
$\mu_{d-m-1}(m+1)\ge\mu_{d-m}(m)$. This proves
$\kappa^{\perp}(m+1)\le\kappa^{\perp}(m)$.
\end{proof}

\begin{corollary}[Frobenius error of the isotropic approximation]
\label{cor:frobenius}
Let $B$ be a symmetric positive definite operator on a $k$-dimensional space,
with eigenvalues $\nu_1\ge\cdots\ge\nu_k>0$ and orthonormal eigenvectors
$w_1,\dots,w_k$, and let $\bar\omega=\frac1k\sum_{i=1}^k\nu_i^{-1}$ be the mean
of its inverse eigenvalues. Let $\Pi=\sum_i w_iw_i^\top$ be the orthogonal
projector onto its range and
$\Delta=B^{-1}-\bar\omega\,\Pi$ the isotropic-approximation error. Then
\begin{equation}
\|\Delta\|_F=\sqrt{k}\,\sigma,
\qquad
\sigma^2=\frac1k\sum_{i=1}^k\bigl(\nu_i^{-1}-\bar\omega\bigr)^2
=\mathrm{Var}\bigl(\nu_1^{-1},\dots,\nu_k^{-1}\bigr),
\label{eq:frob_generic}
\end{equation}
the variance of the inverse eigenvalues.
\end{corollary}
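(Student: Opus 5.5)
The plan is to diagonalize everything in the eigenbasis of $B$, so that the Frobenius norm reduces to a sum over eigenvalues. By the spectral theorem, $B=\sum_{i=1}^k\nu_i w_iw_i^\top$ with $\{w_i\}$ orthonormal. Since $B$ is positive definite on its $k$-dimensional space, its inverse there is
\[
B^{-1}=\sum_{i=1}^k\nu_i^{-1}w_iw_i^\top .
\]
This is the Moore--Penrose inverse if $B$ is viewed as an operator on a larger ambient space, exactly as in \eqref{eq:Bpinv}. The projector is $\Pi=\sum_i w_iw_i^\top$. Substituting both gives
\[
\Delta=\sum_{i=1}^k\bigl(\nu_i^{-1}-\bar\omega\bigr)w_iw_i^\top .
\]
So $\Delta$ is symmetric, with eigenvectors $w_i$ and eigenvalues $\nu_i^{-1}-\bar\omega$, and it vanishes on the orthogonal complement of $\mathrm{range}(\Pi)$.

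Next I compute the Frobenius norm as $\|\Delta\|_F^2=\mathrm{tr}(\Delta^\top\Delta)=\mathrm{tr}(\Delta^2)$. Using $w_i^\top w_j=\delta_{ij}$, the square collapses to
\[
\Delta^2=\sum_i\bigl(\nu_i^{-1}-\bar\omega\bigr)^2w_iw_i^\top ,
\]
and since $\mathrm{tr}(w_iw_i^\top)=1$ this yields
\[
\|\Delta\|_F^2=\sum_{i=1}^k\bigl(\nu_i^{-1}-\bar\omega\bigr)^2=k\sigma^2 .
\]
Taking square roots gives $\|\Delta\|_F=\sqrt{k}\,\sigma$. By definition of $\bar\omega$ as the mean of the $\nu_i^{-1}$, $\sigma^2$ is the population variance of these values, which is \eqref{eq:frob_generic}.

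No step is a real obstacle; the only point needing care is the setting. If $B$ acts on a larger ambient space, as when $B=A_{m,\perp}$ lives in $\R^d$, then $B^{-1}$ must be read as the pseudo-inverse. The argument is unchanged: every term is supported on $\mathrm{range}(\Pi)$, so the complement contributes nothing to the trace. I would also record, as a remark, that $\sigma=0$ exactly when all $\nu_i$ coincide. I would further note that $\sigma\le\frac12(\nu_k^{-1}-\nu_1^{-1})$, since the standard deviation of values in an interval is at most half the interval's length. This links the corollary to $\kappa_S^\perp(m)$ via Proposition~\ref{prop:kappa_sperp}(v) when applied with $B^{-1}=D$.
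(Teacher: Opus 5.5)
Your proof is correct and is the paper's own argument: expand $\Delta=\sum_i(\nu_i^{-1}-\bar\omega)\,w_iw_i^\top$ in the eigenbasis of $B$, then read off $\|\Delta\|_F^2=\mathrm{tr}(\Delta^2)=\sum_i(\nu_i^{-1}-\bar\omega)^2=k\sigma^2$. Your side remarks are also correct and consistent with how the paper later uses this corollary in Proposition~\ref{prop:total_quality}(iv): reading $B^{-1}$ as a pseudo-inverse on a larger ambient space, and the Popoviciu-type bound $\sigma\le\frac12(\nu_k^{-1}-\nu_1^{-1})$.
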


\begin{proof}
Since $\{w_i\}$ are orthonormal eigenvectors of $B$,
$\Delta=\sum_{i=1}^k(\nu_i^{-1}-\bar\omega)\,w_iw_i^\top$, so
$\|\Delta\|_F^2=\mathrm{tr}(\Delta^2)=\sum_{i=1}^k(\nu_i^{-1}-\bar\omega)^2
=k\sigma^2$. Taking the square root gives \eqref{eq:frob_generic}.
\end{proof}

\begin{theorem}[Isotropic approximation error]
\label{thm:iso}
Let $B$ be symmetric positive definite on a $k$-dimensional space with
eigenvalues $\nu_1\ge\cdots\ge\nu_k>0$, condition number
$\kappa_B=\frac{\nu_1}{\nu_k}$, and $\bar\omega=\frac1k\sum_i\nu_i^{-1}$. Let $\widetilde\Gamma=\bar\omega\,\Pi$ be the isotropic operator on $\mathrm{range}(B)$. Then:
\begin{enumerate}[label=\textup{(\roman*)}, leftmargin=2.2em]
\item $\widetilde\Gamma=B^{-1}$ if and only if $\kappa_B=1$.
\item $\bigl\|B^{-1}-\widetilde\Gamma\bigr\|_2\le\bar\omega\,(\kappa_B-1)$.
\item $\dfrac{1}{k}\,\mathrm{KL}\!\bigl(\mathcal{N}(0,\widetilde\Gamma)\,\big\|\,
\mathcal{N}(0,B^{-1})\bigr)\le\tfrac12(\kappa_B-1)^2.$
\end{enumerate}
\end{theorem}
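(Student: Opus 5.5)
We work throughout in the eigenbasis of $B$, as in the proof of Corollary~\ref{cor:frobenius}. There the error is
\[
\Delta=B^{-1}-\widetilde\Gamma=\sum_{i=1}^k(\nu_i^{-1}-\bar\omega)\,w_iw_i^{\top},
\]
and $B^{-1}$ and $\widetilde\Gamma$ are simultaneously diagonal on $\mathrm{range}(B)$. Each part of the theorem then reduces to a scalar statement about the numbers $\nu_i^{-1}$ and their mean $\bar\omega$. The one elementary fact we use repeatedly is that $\bar\omega$, being an average of the $\nu_i^{-1}$, lies in $[\nu_1^{-1},\nu_k^{-1}]$.

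\emph{Part (i).} $\Delta=0$ holds if and only if every coefficient $\nu_i^{-1}-\bar\omega$ vanishes. This means all inverse eigenvalues equal their mean, so all $\nu_i$ are equal, which is exactly $\kappa_B=1$. The converse is immediate.

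\emph{Part (ii).} The spectral norm of the diagonal error is $\max_i|\nu_i^{-1}-\bar\omega|$. Both $\nu_i^{-1}$ and $\bar\omega$ lie in $[\nu_1^{-1},\nu_k^{-1}]$, so each difference is at most the interval length. This gives
\[
\nu_k^{-1}-\nu_1^{-1}=\nu_1^{-1}(\kappa_B-1)\le\bar\omega(\kappa_B-1),
\]
where the last step uses $\bar\omega\ge\nu_1^{-1}$.

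\emph{Part (iii).} We use the Gaussian KL formula with $\Sigma_0=\bar\omega\,\Pi$ and $\Sigma_1=B^{-1}$, both restricted to the $k$-dimensional range. In the eigenbasis it reads
\[
\mathrm{KL}=\tfrac12\sum_{i=1}^k\bigl(x_i-1-\log x_i\bigr),\qquad x_i=\bar\omega\,\nu_i .
\]
This follows from $\mathrm{tr}(\Sigma_1^{-1}\Sigma_0)=\bar\omega\sum_i\nu_i$ and $\log\det\Sigma_1-\log\det\Sigma_0=-\sum_i\log(\bar\omega\nu_i)$.

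The bracketing of $\bar\omega$ gives $x_i\in[\nu_i/\nu_1,\;\nu_i/\nu_k]\subseteq[\kappa_B^{-1},\kappa_B]$. It therefore suffices to prove the pointwise bound $\phi(x)=x-1-\log x\le(\kappa_B-1)^2$ for $x\in[\kappa_B^{-1},\kappa_B]$. Averaging over $i$ and keeping the factor $\tfrac12$ then yields the claim.

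For this we use the single inequality $\log x\ge 1-1/x$, valid for all $x>0$. It gives $\phi(x)\le x-2+1/x=(x-1)^2/x$. We then split into two cases.
\begin{itemize}
\item For $x\ge1$: $(x-1)^2/x\le(x-1)^2\le(\kappa_B-1)^2$.
\item For $x<1$: the function $(1-x)^2/x$ is decreasing, so with $x\ge\kappa_B^{-1}$ it is at most $\kappa_B(1-\kappa_B^{-1})^2=(\kappa_B-1)^2/\kappa_B\le(\kappa_B-1)^2$.
\end{itemize}

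The main obstacle is this last step: we need a bound on $\phi$ that is uniform over the whole admissible range of $x_i$, including values below $1$, where $-\log x$ blows up. The localization $x_i\in[\kappa_B^{-1},\kappa_B]$, which comes from $\bar\omega$ being a mean of the $\nu_j^{-1}$, is what keeps it finite. Parts (i) and (ii) are routine once the diagonal form of $\Delta$ is in place.
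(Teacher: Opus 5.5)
Your proof is correct. Parts (i) and (ii) are exactly the paper's argument, but for (iii) you take a different route. The paper works with averages. It writes $2\,\mathrm{KL}=k\,(x-1-\log x)+k\bigl(\log\bar\nu-\tfrac1k\sum_j\log\nu_j\bigr)$ with $x=\bar\omega\bar\nu$. It uses AM--HM to place $x\in[1,\kappa_B]$, so that $x-1-\log x\le\tfrac12(x-1)^2$ applies, and it bounds the Jensen gap by a second-order Taylor remainder with the crude estimate $\xi_j\ge\nu_k$. Each piece contributes $\tfrac12(\kappa_B-1)^2$. You instead bound the divergence eigenvalue by eigenvalue, using only the localization $x_i=\bar\omega\nu_i\in[\kappa_B^{-1},\kappa_B]$ and the single inequality $\log x\ge 1-1/x$. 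You therefore need neither Jensen, AM--HM nor Taylor, and you handle the eigenvalues with $x_i<1$ directly, where the paper absorbs them into the Jensen gap. Your route also gives more than you claimed. The function $h(x)=(x-1)^2/x$ satisfies $h(x)=h(1/x)$ and is increasing on $[1,\infty)$, so in your first case you can keep the factor $1/x$ and obtain $h(x)\le(\kappa_B-1)^2/\kappa_B$, just as in your second case. This yields $\frac1k\,\mathrm{KL}\le\frac{(\kappa_B-1)^2}{2\kappa_B}$, a factor $\kappa_B$ better than the theorem. What the paper's decomposition buys in exchange is an interpretation of the divergence: a mismatch between $\bar\omega$ and $1/\bar\nu$ (the harmonic/arithmetic mean gap) plus a separate spectral-spread term. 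That structure is not used elsewhere in the paper.
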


\begin{proof}
$B^{-1}$ has eigenvalues $\nu_j^{-1}$ and $\widetilde\Gamma$ has all eigenvalues equal to $\bar\omega=\frac1k\sum_j\nu_j^{-1}$.

\emph{(i)} $\widetilde\Gamma=B^{-1}$ iff $\nu_j^{-1}=\bar\omega$ for all $j$, i.e. all $\nu_j$ equal, i.e. $\kappa_B=1$.

\emph{(ii)} $B^{-1}-\widetilde\Gamma$ has eigenvalues $\nu_j^{-1}-\bar\omega$, so its norm is $\max_j|\nu_j^{-1}-\bar\omega|\le\nu_k^{-1}-\nu_1^{-1} =\nu_1^{-1}(\kappa_B-1)\le\bar\omega(\kappa_B-1)$, using
$\bar\omega\ge\nu_1^{-1}$.

\emph{(iii)} With $x=\bar\omega\bar\nu$, $\bar\nu=\frac1k\sum_j\nu_j$,
\[
2\,\mathrm{KL}
=\mathrm{tr}(B\widetilde\Gamma)-k-\log\det(B\widetilde\Gamma)
=k\bigl[x-1-\log x\bigr]+k\,\Delta,
\qquad
\Delta=\log\bar\nu-\tfrac1k\textstyle\sum_j\log\nu_j\ge0,
\]
the eigenvalues of $B\widetilde\Gamma$ being $\bar\omega\nu_j$ and $\Delta\ge0$ by Jensen. By the arithmetic mean and the harmonic mean, $x\ge1$, and $\nu_j\in[\nu_k,\nu_1]$ gives $x\le\kappa_B$,
since $t-1-\log t\le\tfrac12(t-1)^2$ on $[1,+\infty[$,
$x-1-\log x\le\tfrac12(\kappa_B-1)^2$. For $\Delta$, Taylor expansion of $\log$ about $\bar\nu$ with $\sum_j(\nu_j-\bar\nu)=0$ gives
$$\Delta=\frac{1}{2k}\sum_j\frac{(\nu_j-\bar\nu)^2}{\xi_j^2} \le \frac{(\nu_1-\nu_k)^2}{2\nu_k^2}
=\tfrac12(\kappa_B-1)^2,$$ using $\xi_j\ge\nu_k$. Summing,
$2\,\mathrm{KL}\le k(\kappa_B-1)^2$.
\end{proof}

\noindent\textbf{Instantiations.}
Corollary~\ref{cor:frobenius} and Theorem~\ref{thm:iso} apply to any SPD complement operator. Two cases are relevant here.
(Decoupled, $\beta_m=0$.) Taking $B=T_{m,\perp}$, $k=d-m$,
$\kappa_B=\kappa^{\perp}(m)$ and $\bar\omega=\omega(m)$ recovers the original error bounds against the compressed target $A_{m,\perp}^{+}= Q_{m,\perp} T_{m,\perp}^{-1} Q_{m, \perp}^{\top}$.

 \emph{(Coupled)}. Taking $B=D^{-1}$ with $D=V^{\top}S_{\perp}^{-1}V$ the bulk block of the inverse (Proposition~\ref{prop:kappa_sperp}(v)), note that $D$ is the compression of $S_{\perp}^{-1}$, which differs from the inverse of the compression of $S_{\perp}$ by a Schur correction with $k=d-m-1$, $\kappa_B= \frac{\delta_1}{\delta_{d-m-1}} \le\kappa_S^{\perp}(m)$ and $\bar\omega=\omega(m)$ of \eqref{eq:omega_rest}, exact by \eqref{eq:omega_is_mean}. Theorem~\ref{thm:iso} then controls the block-diagonal part $\omega(m)I-D$ of the error, the off-diagonal cross term
$c$, which Theorem~\ref{thm:iso} does not see, is bounded and made computable
by Proposition~\ref{prop:total_quality}(iii), and Proposition~\ref{prop:total_quality}(iv) combines the two into the total bounds.

\begin{proposition}[Quality of the corrected covariance]
\label{prop:total_quality}
Let $\Sigma_m$ be the corrected covariance of \eqref{eq:Sigma_coupled}. Write
$\hat u = \frac{S_{\perp}^{-1}e_1}{\sqrt{e_1^{\top}S_{\perp}^{-2}e_1}}\in\R^{d-m}$ for
the coordinates of the coupling direction, so that $u=Q_{m,\perp}\hat u$, let
$V\in\R^{(d-m)\times(d-m-1)}$ be any isometry such that $[\,\hat u\ \ V\,]$ is
orthogonal, and set
\[
c = V^{\top}S_{\perp}^{-1}\hat u\ \in\ \R^{d-m-1},
\qquad
D = V^{\top}S_{\perp}^{-1}V\ \in\ \R^{(d-m-1)\times(d-m-1)} .
\]
Then we have
\begin{enumerate}[label=\textup{(\roman*)}, leftmargin=2.2em]
\item \textbf{Exact error identity.} The resolved block and the boundary
coupling of \eqref{eq:Sigma_coupled} are exact, and
\begin{equation}
\Sigma_m-A^{-1}
= Q_{m,\perp}\,[\,\hat u\ \ V\,]\;E\;[\,\hat u\ \ V\,]^{\top} Q_{m,\perp}^{\top},
\qquad
E=\begin{pmatrix}
0 & -\,c^{\top}\\[2pt]
-\,c & \omega(m)\,I_{d-m-1}-D
\end{pmatrix}.
\label{eq:exact_error_blocks}
\end{equation}
The error is supported on $\mathrm{range}(P_m^{\perp})$; its $(u,u)$ entry vanishes because $\sigma_u$ is exact, and
$\tr\bigl(\Sigma_m-A^{-1}\bigr)=0$: the corrected covariance reproduces the total variance of $A^{-1}$ exactly.
\item \textbf{Bulk spectrum and mean.} The eigenvalues
$\delta_1\ge\cdots\ge\delta_{d-m-1}>0$ of $D$ interlace those of
$S_{\perp}^{-1}$, hence
$\delta_i\in[\,\frac{1}{\mu_1^{S}},\,\frac{1}{\mu_{d-m}^{S}}\,]$, and
\begin{equation}
\omega(m)=\frac{1}{d-m-1}\,\tr(D)
\label{eq:omega_is_mean}
\end{equation}
is exactly their mean.
\item \textbf{Cross term.} With $p_k=(S_{\perp}^{-k})_{11}$,
\begin{equation}
\|c\|_2^{2}
=\frac{p_4}{p_2}-\Bigl(\frac{p_3}{p_2}\Bigr)^{2},
\qquad
\|c\|_2\;\le\;\tfrac12\,\omega(m)\bigl(\kappa_S^{\perp}(m)-1\bigr),
\label{eq:cross_term}
\end{equation}
where the moments $p_2,p_3,p_4$ are returned by the boundary probe of
Section~\ref{sec:sperp_estimation} at no additional matrix-vector products.
\item \textbf{Error bounds.}
\begin{equation}
\|\Sigma_m-A^{-1}\|_2
 \le\tfrac{3}{2}\,\omega(m)\bigl(\kappa_S^{\perp}(m)-1\bigr),
\qquad
\|\Sigma_m-A^{-1}\|_F
\le\tfrac{1}{2}\,\omega(m)\bigl(\kappa_S^{\perp}(m)-1\bigr)\sqrt{d-m+1},
\label{eq:total_bounds}
\end{equation}
and conversely $\|c\|_2\le\|\Sigma_m-A^{-1}\|_2$, so the computable quantity \eqref{eq:cross_term} is also a lower bound on the operator error.
\item \textbf{Exactness.} $\Sigma_m=A^{-1}$ if and only if $c=0$ and
$D=\omega(m)\,I$, i.e.\ if and only if $u$ is an eigendirection of $S_{\perp}$ and $S_{\perp}$ acts as a multiple of the identity on the orthogonal complement of $u$ within $\mathrm{range}(P_m^{\perp})$. In particular this holds whenever
$\kappa_S^{\perp}(m)=1$.
\end{enumerate}
\end{proposition}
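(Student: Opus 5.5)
The plan is to work entirely in the rotated complement basis $Q_{m,\perp}[\,\hat u\ \ V\,]$, where the exact inverse block $S_{\perp}^{-1}$ becomes a $2\times2$ block matrix with entries $\sigma_u$, $c$ and $D$. Every item then reduces either to bookkeeping against Proposition~\ref{prop:block_inverse} or to elementary bounds on the spread of numbers lying in $[\,1/\mu_1^S,\,1/\mu_{d-m}^S\,]$.

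\emph{Step (i).} I would subtract \eqref{eq:Ainv_exact} from \eqref{eq:Sigma_coupled}. The resolved term and the three coupling terms appear identically in both and cancel. What remains is
\[
\sigma_u uu^{\top}+\omega(m)P_{\mathrm{rest}}-Q_{m,\perp}S_{\perp}^{-1}Q_{m,\perp}^{\top}.
\]
Since $[\,\hat u\ \ V\,]$ is orthogonal, I would write
\[
S_{\perp}^{-1}=[\,\hat u\ \ V\,]\begin{pmatrix}\hat u^{\top}S_{\perp}^{-1}\hat u & c^{\top}\\ c & D\end{pmatrix}[\,\hat u\ \ V\,]^{\top}.
\]
Then I check two facts. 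First, $\hat u^{\top}S_{\perp}^{-1}\hat u=\sigma_u$ by \eqref{eq:sigma_g}. Second, $P_{\mathrm{rest}}=P_m^{\perp}-uu^{\top}=Q_{m,\perp}VV^{\top}Q_{m,\perp}^{\top}$. Together these give $E$ as displayed in \eqref{eq:exact_error_blocks}. The trace of the error is $(d-m-1)\omega(m)-\tr(D)$. By (ii) this vanishes, because $\tr(S_{\perp}^{-1})=\sigma_u+\tr(D)$.

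\emph{Step (ii).} Interlacing of the eigenvalues of $D$ was already shown in Proposition~\ref{prop:kappa_sperp}(v). The mean identity \eqref{eq:omega_is_mean} follows from the trace splitting just noted, combined with the definition \eqref{eq:omega_rest}.

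\emph{Step (iii).} Since $[\,\hat u\ \ V\,]$ is orthogonal, $\|c\|_2^2=\hat u^{\top}S_{\perp}^{-2}\hat u-(\hat u^{\top}S_{\perp}^{-1}\hat u)^2$. Substituting $\hat u=S_{\perp}^{-1}e_1/\sqrt{p_2}$ gives $p_4/p_2-(p_3/p_2)^2$. For the bound, I would read this expression as the variance of the eigenvalues of $S_{\perp}^{-1}$ under the probability weights $(\hat u^{\top}v_j)^2$, where $v_j$ are the eigenvectors of $S_{\perp}$. That distribution is supported on $[a,b]$ with $a=1/\mu_1^S$ and $b=1/\mu_{d-m}^S$. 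Popoviciu's inequality then gives $\|c\|_2\le (b-a)/2=\tfrac{1}{2\mu_1^S}(\kappa_S^{\perp}-1)$. Finally, $1/\mu_1^S\le\delta_{d-m-1}\le\omega(m)$ by (ii).

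\emph{Step (iv).} For the operator norm, I would use $\|E\|_2\le\|c\|_2+\|\omega I-D\|_2$. Both $\omega$ and all $\delta_i$ lie in $[a,b]$, so $\|\omega I-D\|_2\le b-a\le\omega(\kappa_S^{\perp}-1)$, and adding the bound from (iii) gives the factor $\tfrac32$. For the Frobenius norm, $\|E\|_F^2=2\|c\|_2^2+\sum_i(\delta_i-\omega)^2$. Since $\omega$ is the mean of the $\delta_i$, the second sum is $(d-m-1)$ times a variance on $[a,b]$, hence at most $(d-m-1)(b-a)^2/4$. The first term is at most $2(b-a)^2/4$. The total is at most $(d-m+1)(b-a)^2/4$, which yields the stated bound. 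For the converse, I would apply $E$ to the first coordinate vector: its image is $(0,-c)$, so $\|E\|_2\ge\|c\|_2$.

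\emph{Step (v).} The error vanishes iff $E=0$, i.e.\ iff $c=0$ and $D=\omega I$. Now $c=0$ means $\hat u$ is an eigenvector of $S_{\perp}^{-1}$, and hence of $S_{\perp}$. Given that, $D=\omega I$ says $S_{\perp}$ is scalar on $\hat u^{\perp}$. If $\kappa_S^{\perp}=1$ then $b=a$, and both conditions are forced.

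The only genuinely non-routine point is the variance-on-an-interval argument in (iii) and (iv). I would handle it via Popoviciu's inequality, and make sure the chain $1/\mu_1^S\le\omega(m)$ is justified through the interlacing in (ii), so that the final bounds are stated in terms of $\omega(m)$.
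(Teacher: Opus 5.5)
Your proposal is correct and follows the paper's proof essentially step for step. It uses the same block decomposition of $S_{\perp}^{-1}$ in the basis $[\,\hat u\ \ V\,]$, the same spectral-measure reading of $\|c\|_2^2$ with Popoviciu's inequality, and the same splitting of $E$ into its off-diagonal part (norm $\|c\|_2$) and the block $\omega(m)I-D$. The one step you leave implicit, which the paper states, is that $Q_{m,\perp}[\,\hat u\ \ V\,]$ has orthonormal columns, so the operator and Frobenius norms of $\Sigma_m-A^{-1}$ equal those of $E$.
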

 
\begin{proof}
\emph{(i)} Comparing \eqref{eq:Sigma_coupled} with the exact inverse
\eqref{eq:Ainv_exact}, the resolved term and the three coupling terms are identical, so
\[
\Sigma_m-A^{-1}
=\sigma_u\,uu^{\top}+\omega(m)\,P_{\mathrm{rest}}
-Q_{m,\perp}S_{\perp}^{-1}Q_{m,\perp}^{\top}.
\]
Substituting $u=Q_{m,\perp}\hat u$ and
$P_{\mathrm{rest}}=P_m^{\perp}-uu^{\top}
=Q_{m,\perp}\bigl(I_{d-m}-\hat u\hat u^{\top}\bigr)Q_{m,\perp}^{\top}$ keeps the error to $Q_{m,\perp}\,(\cdot)\,Q_{m,\perp}^{\top}$ with inner factor
$\sigma_u\hat u\hat u^{\top}+\omega(m)(I-\hat u\hat u^{\top})-S_{\perp}^{-1}$. In the orthonormal basis $[\,\hat u\ V\,]$, using $I-\hat u\hat u^{\top}=VV^{\top}$ and $\hat u^{\top}S_{\perp}^{-1}\hat u=\sigma_u$ from \eqref{eq:sigma_g}, this inner factor has exactly the block form $E$ of \eqref{eq:exact_error_blocks}. For the trace, the block splitting gives $\tr(S_{\perp}^{-1})=\hat u^{\top}S_{\perp}^{-1}\hat u+\tr(V^{\top}S_{\perp}^{-1}V)
=\sigma_u+\tr(D)$, while \eqref{eq:omega_rest} pins
$\sigma_u+(d-m-1)\,\omega(m)=\tr(S_{\perp}^{-1})$, hence
$\tr(E)=(d-m-1)\,\omega(m)-\tr(D)=0$.
 
\emph{(ii)} $D=V^{\top}S_{\perp}^{-1}V$ is a compression of the symmetric positive definite $S_{\perp}^{-1}$ by the isometry $V$, so by the Cauchy interlacing theorem its eigenvalues lie in
$[\lambda_{\min}(S_{\perp}^{-1}),\lambda_{\max}(S_{\perp}^{-1})]
=[\frac{1}{\mu_1^{S}},\,\frac{1}{\mu_{d-m}^{S}}]$. Equation \eqref{eq:omega_is_mean} follows from $\tr(S_{\perp}^{-1})=\sigma_u+\tr(D)$ and the definition \eqref{eq:omega_rest} of $\omega(m)$.
 
\emph{(iii)} Since $[\,\hat u\ V\,]$ is orthogonal,
$\|c\|_2^{2}=\|S_{\perp}^{-1}\hat u\|_2^{2}-(\hat u^{\top}S_{\perp}^{-1}\hat u)^{2}
=\hat u^{\top}S_{\perp}^{-2}\hat u-\sigma_u^{2}$. With
$\hat u=\frac{S_{\perp}^{-1}e_1}{\sqrt{e_1^{\top}S_{\perp}^{-2}e_1}}$,
\[
\hat u^{\top}S_{\perp}^{-2}\hat u
=\frac{e_1^{\top}S_{\perp}^{-4}e_1}{e_1^{\top}S_{\perp}^{-2}e_1}
=\frac{p_4}{p_2},
\qquad
\sigma_u=\frac{p_3}{p_2},
\]
giving the moment formula in \eqref{eq:cross_term}; the quadrature of
\eqref{eq:boundary_forms} extended to $k=4$ supplies $p_4$ from the same tridiagonal $\Theta_{\gamma}$. For the bound, let $\nu_{\hat u}$ be the spectral measure of $S_{\perp}$ with respect to the unit vector $\hat u$, so that
$\hat u^{\top}S_{\perp}^{-k}\hat u=\int x^{-k}\,d\nu_{\hat u}(x)$. Then
$$\|c\|_2^{2}=\int x^{-2}\,d\nu_{\hat u}-\bigl(\int x^{-1}\,d\nu_{\hat u}\bigr)^{2} =\mathrm{Var}_{\nu_{\hat u}}\bigl(x^{-1}\bigr)$$. The variable $x^{-1}$ is supported in $[1/\mu_1^{S},\,1/\mu_{d-m}^{S}]$, so Popoviciu's inequality gives
\[
\|c\|_2^{2}
\le\frac14\Bigl(\frac{1}{\mu_{d-m}^{S}}-\frac{1}{\mu_1^{S}}\Bigr)^{2}
=\frac{\bigl(\kappa_S^{\perp}(m)-1\bigr)^{2}}{4\,(\mu_1^{S})^{2}}
\le\frac{\omega(m)^{2}\bigl(\kappa_S^{\perp}(m)-1\bigr)^{2}}{4},
\]
using $\omega(m)\ge\delta_{\min}\ge 1/\mu_1^{S}$ from (ii).
 
\emph{(iv)} Split
$E=\begin{psmallmatrix}0&0\\0&\omega(m)I-D\end{psmallmatrix}
+\begin{psmallmatrix}0&-c^{\top}\\-c&0\end{psmallmatrix}$,
the second has eigenvalues $\pm\|c\|_2$ and hence operator norm
$\|c\|_2$. By (ii), $\omega(m)$ lies in the convex hull of the $\delta_i$, so
$\|\omega(m)I-D\|_2=\max_i|\delta_i-\omega(m)|
\le\delta_1-\delta_{d-m-1}
\le 1/\mu_{d-m}^{S}-1/\mu_1^{S}
\le\omega(m)\bigl(\kappa_S^{\perp}(m)-1\bigr)$.
Adding the two parts and using (iii) gives the operator bound in
\eqref{eq:total_bounds}. For the Frobenius norm,
$\|E\|_F^{2}=2\|c\|_2^{2}+\|\omega(m)I-D\|_F^{2}$, and
$\|\omega(m)I-D\|_F^{2}=\sum_i(\delta_i-\omega(m))^{2}
=(d-m-1)\,\mathrm{Var}(\delta_1,\dots,\delta_{d-m-1})$; since $\omega(m)$ is the mean of the $\delta_i$ and they lie in an interval of length at most $\omega(m)(\kappa_S^{\perp}(m)-1)$, Popoviciu bounds the variance by $\tfrac14\,\omega(m)^{2}(\kappa_S^{\perp}(m)-1)^{2}$. Summing,
$\|E\|_F^{2}
\le\tfrac14\,\omega(m)^{2}(\kappa_S^{\perp}(m)-1)^{2}\,\bigl(d-m+1\bigr)$, which is \eqref{eq:total_bounds}. The lower bound follows by applying $E$ to the first coordinate vector: $\|E\|_2\ge\|(0,-c)\|_2=\|c\|_2$. Since $Q_{m,\perp}[\,\hat u\ V\,]$ has orthonormal columns, both norms of $\Sigma_m-A^{-1}$ equal those of $E$.
 
\emph{(v)} From \eqref{eq:exact_error_blocks}, $E=0$ if and only if $c=0$ and $D=\omega(m)I$. The condition $c=0$ states
$S_{\perp}^{-1}\hat u\in\mathrm{span}\{\hat u\}$, i.e.\ $\hat u$ is an
eigenvector of $S_{\perp}^{-1}$. The condition $D=\omega(m)I$ states that $S_{\perp}^{-1}$ restricted to $\hat u^{\perp}$ is
$\omega(m)I$. If $\kappa_S^{\perp}(m)=1$ then $S_{\perp}$ is a multiple of the identity and both conditions hold.
\end{proof}

 \begin{remark}
\label{rem:trace_comparison}
Proposition~\ref{prop:total_quality}(i) shows the coupled model reproduces  almost the total variance , $\tr(\Sigma_m) \approx \tr(A^{-1})$. The decoupled correction $f_m(A)+\omega_{\mathrm{dec}}P_m^{\perp}$ with $\omega_{\mathrm{dec}}=\frac{\tr(T_{m,\perp}^{-1})}{d-m}$ undercovers it by
\[
\tr(A^{-1})-\tr\bigl(f_m(A)+\omega_{\mathrm{dec}}P_m^{\perp}\bigr)
=\bigl(S_{\perp}^{-1}\bigr)_{11}\,\|q_{\parallel}\|_2^{2}
+\frac{\gamma\,(T_{m,\perp}^{-2})_{11}}{1-\gamma\,(T_{m,\perp}^{-1})_{11}},
\]
where the first term is the resolved-block inflation of \eqref{eq:blk_res} and the second is the complement trace deficit of
Proposition~\ref{prop:sherman_morrison}. Both terms are strictly positive if and only if $\beta_m\neq0$: whenever the Lanczos process has not converged, the decoupled model provably assigns too little total variance, and the coupled model assigns exactly the right amount.
\end{remark}

\subsection{Computational Uncertainty from the Exact Decomposition}
\label{sec:pn_interpretation}

The proposition~\ref{prop:block_inverse} showed the exact deterministic identity
\eqref{eq:Ainv_exact},
\begin{equation}
\label{eq:recall_identity}
A^{-1}
= Q_mT_m^{-1}Q_m^{\top}
+ (S_\perp^{-1})_{11}\,q_{\parallel}q_{\parallel}^{\top}
             -q_{\parallel}q_{\perp}^{\top}-q_{\perp}q_{\parallel}^{\top}
             +Q_{m,\perp}S_\perp^{-1}Q_{m,\perp}^{\top},
\end{equation} 
Denote $\Sigma_m^{\perp} = (S_\perp^{-1})_{11}\,q_{\parallel}q_{\parallel}^{\top}
             -q_{\parallel}q_{\perp}^{\top}-q_{\perp}q_{\parallel}^{\top}
             +Q_{m,\perp}S_\perp^{-1}Q_{m,\perp}^{\top}$. Equation \eqref{eq:recall_identity} works  for every $m<d$ with no approximation. This subsection shows that the
term $\Sigma_m^{\perp}$ is not simply a residual but a valid covariance: it is the posterior covariance of a Gaussian belief over $A^{-1}v$ conditioned on the information a matrix-free method actually acquires. 

\paragraph{Relation to Bayesian probabilistic linear solvers.}
We state the relationship as it is, since the conditioning step below is well known. \citet{hennig2015probabilistic} places a matrix  prior on $A^{-1}$, \citet{cockayne2019bayesiancg} instead place a prior $\mathcal{N}(x_0,\Sigma_0)$ on the solution $x_\star$ of $Ax_\star=b$ and condition on $y_m=S_m^{\top}Ax_\star=S_m^{\top}b$, obtaining
\begin{equation}
\label{eq:bcg_post}
x_m=x_0+\Sigma_0A^{\top}S_m\Lambda_m^{-1}S_m^{\top}r_0,
\qquad
\Sigma_m^{\mathrm{BCG}}=\Sigma_0-\Sigma_0A^{\top}S_m\Lambda_m^{-1}S_m^{\top}A\Sigma_0,
\end{equation}
with $\Lambda_m=S_m^{\top}A\Sigma_0A^{\top}S_m$. 

The following paragraph below focused on the case  $x_0=0$,
$\Sigma_0=A^{-1}$, and $\mathrm{range}(S_m)=\mathrm{range}(Q_m)$, here we don't claim  to present anything new  for the conditioning process. \citet{cockayne2019bayesiancg} observe that the choice
$\Sigma_0=A^{-1}$ recovers CG as the posterior mean but dismiss it as impractical, because the inverse cancels in the mean yet persists in the covariance \eqref{eq:bcg_post}. That is the issue  this paper overcomes.

The distinction is where the complement uncertainty comes from. In
\eqref{eq:bcg_post} the posterior on the unexplored subspace is the restricted prior distribution: no information about $A$ is includes in it, and the contraction is  linear, $\operatorname{tr}(\Sigma_m^{\mathrm{BCG}}\Sigma_0^{-1})=d-m$, a rate the authors consider difficult to improve. In \eqref{eq:recall_identity} the complement block is $Q_{m,\perp}S_\perp^{-1}Q_{m,\perp}^{\top}$, a Schur complement of $A$ itself, it is a property of the operator, not of a prior, and Section~\ref{sec:pslq} estimates it matrix-free. The remaining term is measured rather than assumed, which is what makes $\Sigma_0=A^{-1}$ usable rather than circular.

\paragraph{Prior and observation.}
For a probe $v$, write $x_\star=A^{-1}v$ and place the prior
\begin{equation}
\label{eq:prior}
x_\star \sim \mathcal{N}\!\left(0,\,A^{-1}\right),
\end{equation}
that is never formed or inverted, it encodes the geometry of the operator and, as shown below, cancels from every computed quantity. A matrix-free method observes $x_\star$ only through the action of $A$: after $m$ Lanczos steps from $v_{\mathrm{ref}}$, the information acquired is
\begin{equation}
\label{eq:obs}
y_m = Q_m^{\top}A\,x_\star = Q_m^{\top}v ,
\end{equation}
the $A$-conjugate observation model of probabilistic linear solvers
\citep{cockayne2019bayesiancg,wenger2022posterior}. As in that literature the likelihood is a Dirac, $p(y\mid x)=\delta(y-Q_m^{\top}Ax)$: the matrix-vector products are computed without error, so the only uncertainty is epistemic, arising from having observed $m<d$ linear functionals rather than $d$. The basis $Q_m$ is fixed by $v_{\mathrm{ref}}$ and reused for every probe.

\paragraph{Posterior mean equals the resolved term}
 Conditioning \eqref{eq:prior} on \eqref{eq:obs} gives a Gaussian posterior with mean
\begin{equation}
\label{eq:mean}
\hat{x}_m
= Q_m\bigl(Q_m^{\top}AQ_m\bigr)^{-1}Q_m^{\top}A\,x_\star
= Q_mT_m^{-1}Q_m^{\top}\,v ,
\end{equation}
i.e.\ the resolved Krylov term of \eqref{eq:recall_identity} applied to $v$.

In fact, from \citet[Prop.~1]{cockayne2019bayesiancg}, for a Gaussian prior $\mathcal{N}(0,\Sigma_0)$ and an exact linear observation
$y_m=Mx_\star$, the posterior mean is $\hat{x}_m=\Sigma_0M^{\top}(M\Sigma_0M^{\top})^{-1}y_m$. With $\Sigma_0=A^{-1}$ and $M=Q_m^{\top}A$, both factors collapse,
\begin{equation}
\label{eq:collapse}
\Sigma_0M^{\top}=A^{-1}(AQ_m)=Q_m,
\qquad
M\Sigma_0M^{\top}=Q_m^{\top}A\,A^{-1}AQ_m=Q_m^{\top}AQ_m=T_m ,
\end{equation}
by the Lanczos relation $Q_m^{\top}AQ_m=T_m$. Substituting and using
$y_m=Q_m^{\top}Ax_\star$ gives $\hat{x}_m=Q_mT_m^{-1}Q_m^{\top}Ax_\star$, since $x_\star=A^{-1}v$ we have $Ax_\star=v$, hence
$\hat{x}_m=Q_mT_m^{-1}Q_m^{\top}v$.

The simplification in   \eqref{eq:collapse} is the reason the prior is admissible. Every occurrence of $A^{-1}$ cancels against an $A$ supplied by the observation operator, leaving only $Q_m$ and $T_m$, both produced by the Lanczos recursion. The prior $\Sigma_0=A^{-1}$ is  not evaluated but delated, the same cancellation is what identifies $\Sigma_0=A^{-1}$ with CG in \citet{cockayne2019bayesiancg}. What remains, and what obstructed that choice there, is the covariance.

\paragraph{Posterior covariance equals the remaining term}
The posterior covariance from conditioning \eqref{eq:prior} on \eqref{eq:obs} is
\begin{equation}
\label{eq:cov}
\Sigma_m^{\perp}
= A^{-1} - Q_mT_m^{-1}Q_m^{\top}
= (S_\perp^{-1})_{11}\,q_{\parallel}q_{\parallel}^{\top}
  -q_{\parallel}q_{\perp}^{\top}-q_{\perp}q_{\parallel}^{\top}
  +Q_{m,\perp}S_\perp^{-1}Q_{m,\perp}^{\top},
\end{equation}
exactly the remaining term of \eqref{eq:recall_identity}. Indeed, the posterior covariance is $\Sigma_0-\Sigma_0M^{\top}(M\Sigma_0M^{\top})^{-1}M\Sigma_0$. By the simplifications  \eqref{eq:collapse} this reduces to
\begin{equation}
\label{eq:cov_short}
\Sigma_m^{\perp} = A^{-1} - Q_mT_m^{-1}Q_m^{\top}.
\end{equation}
Subtracting $Q_mT_m^{-1}Q_m^{\top}$ from both sides of \eqref{eq:recall_identity} leaves the right side of \eqref{eq:cov}. Both equalities are exact.

The mean is the resolved Krylov term  that it coincides with CG under this prior is \citet[Prop.~4]{cockayne2019bayesiancg}. The covariance is the difficulty those authors identify: the inverse cancels in the mean but persists here.

The paragraph above is the point of contact between the two ways to use that approximation. The two equalities say different things. The left one is the posterior covariance of any Bayesian linear solver under the prior \eqref{eq:prior}. The right one is the contribution of Section~\ref{sec:cu}: it writes that difference as named terms that can be computed. The first tells us uncertainty is left over. The second tells us where it sits and what shape it has, and that is what makes it possible to estimate.

\paragraph{Structure of the remainder.}
This remainder does not only live in the complement space. The boundary vectors 
\begin{equation}
\label{eq:boundary_vectors}
q_{\parallel}=\beta_m\,Q_mT_m^{-1}e_m\in\mathrm{range}(Q_m),
\qquad
q_{\perp}=Q_{m,\perp}S_\perp^{-1}e_1\in\mathrm{range}(Q_m)^{\perp},
\end{equation}
couple the resolved and unresolved subspaces, producing the following term structure
\begin{equation}
\label{eq:three_part}
\Sigma_m^{\perp}
= (S_\perp^{-1})_{11}\,q_{\parallel}q_{\parallel}^{\top}
 -q_{\parallel}q_{\perp}^{\top}-q_{\perp}q_{\parallel}^{\top}
 + Q_{m,\perp}S_\perp^{-1}Q_{m,\perp}^{\top} .
\end{equation}

This has two important consequences that are hidden in the short form  in  \eqref{eq:cov_short}. First, the truncation uncertainty does not sit only on the unexplored directions: the boundary residual injects a rank-one correction into $\mathrm{range}(Q_m)$ itself, so a method that assigns zero variance inside the resolved block is overconfident even about what it has resolved. Second, the null space of the covariance is the $A$-rotated resolved subspace $A\,\mathrm{range}(Q_m)$ rather than $\mathrm{range}(Q_m)$. The subspace that is known exactly is the image of  $\mathrm{range}(Q_m)$ under $A$ not 
$\mathrm{range}(Q_m)$ itself, the gap is again mediated by $\beta_m$ and vanishes only at convergence.

\begin{proposition}[Validity as a covariance]
\label{w}
$\Sigma_m^{\perp}$ is symmetric positive semi-definite of rank $d-m$. Its null space is $A\,\mathrm{range}(Q_m)$, equivalently $\Sigma_m^{\perp}(AQ_m)=0$, and $\Sigma_m^{\perp}\succ0$ on the orthogonal complement of $A\,\mathrm{range}(Q_m)$.
\end{proposition}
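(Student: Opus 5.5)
The plan is to use the short form \eqref{eq:cov_short}, $\Sigma_m^{\perp}=A^{-1}-Q_mT_m^{-1}Q_m^{\top}$, rather than the four-term expansion, and to rewrite it in a $A$-weighted form that makes both semidefiniteness and the null space visible. Symmetry is immediate, since $A^{-1}$ and $T_m^{-1}$ are symmetric. Set $W=A^{1/2}Q_m$, which is used only in the analysis. Then $W^{\top}W=Q_m^{\top}AQ_m=T_m$. Hence $\Pi_W=W T_m^{-1}W^{\top}$ is the orthogonal projector onto $\mathrm{range}(A^{1/2}Q_m)$, and
\[
\Sigma_m^{\perp}=A^{-1/2}\bigl(I_d-W T_m^{-1}W^{\top}\bigr)A^{-1/2}=A^{-1/2}(I_d-\Pi_W)A^{-1/2}.
\]
One checks this by expanding: $A^{-1/2}WT_m^{-1}W^{\top}A^{-1/2}=Q_mT_m^{-1}Q_m^{\top}$. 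Since $I_d-\Pi_W$ is an orthogonal projector, it is positive semidefinite, so $x^{\top}\Sigma_m^{\perp}x=\|(I-\Pi_W)A^{-1/2}x\|_2^2\ge0$.

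For the null space, $\Sigma_m^{\perp}x=0$ holds iff $(I-\Pi_W)A^{-1/2}x=0$, because $A^{-1/2}$ is invertible. This in turn holds iff $A^{-1/2}x\in\mathrm{range}(A^{1/2}Q_m)$, that is, iff $x\in A\,\mathrm{range}(Q_m)$. As a direct sanity check, $\Sigma_m^{\perp}AQ_m=Q_m-Q_mT_m^{-1}(Q_m^{\top}AQ_m)=Q_m-Q_m=0$, using the Lanczos identity $Q_m^{\top}AQ_m=T_m$. Because $A$ is invertible and $Q_m$ has orthonormal columns, $\dim A\,\mathrm{range}(Q_m)=m$. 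Rank–nullity then gives rank $d-m$.

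The final claim is that $\Sigma_m^{\perp}\succ0$ on $(A\,\mathrm{range}(Q_m))^{\perp}$. This follows because a symmetric positive semidefinite matrix is positive definite on the orthogonal complement of its kernel: its range equals $\ker^{\perp}$, and all of its nonzero eigenvalues are positive. The only subtle point is stating this carefully. The null space $A\,\mathrm{range}(Q_m)$ differs from $\mathrm{range}(Q_m)$ unless $\beta_m=0$, so positivity must be asserted on $(A\,\mathrm{range}(Q_m))^{\perp}$ and not on $\mathrm{range}(P_m^{\perp})$.

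I do not expect a real obstacle. The main care point is verifying the factorization through $\Pi_W$, which requires $W^{\top}W=T_m$, i.e.\ $Q_m^{\top}AQ_m=T_m$, and is invertible since $A\succ0$. An alternative route to semidefiniteness would be Proposition~\ref{prop:block_inverse} together with Lemma~\ref{lem:C2_psd}, but the projector argument is cleaner and avoids the coupling terms altogether.
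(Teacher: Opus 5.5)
Your proof is correct and is essentially the paper's argument. The paper writes $\Sigma_m^{\perp}=(I-P)A^{-1}(I-P)^{\top}$ with the idempotent $P=Q_mT_m^{-1}Q_m^{\top}A$ and reads off both semidefiniteness and the kernel from $(I-P)^{\top}w=0$; your $\Pi_W=A^{1/2}Q_mT_m^{-1}Q_m^{\top}A^{1/2}=A^{1/2}PA^{-1/2}$ is exactly that projector conjugated into a Euclidean-orthogonal one, so $A^{-1/2}(I-\Pi_W)A^{-1/2}$ is the same congruence written symmetrically, and it makes the orthogonality of the projector immediate (the paper's $P$ is in fact orthogonal in the $A$-inner product rather than the $A^{-1}$-inner product it states, although its congruence identity holds regardless).
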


\begin{proof}
Symmetry is clear from \eqref{eq:cov_short}. For the null space,
\[
Q_m^{\top}A\,\Sigma_m^{\perp}
=Q_m^{\top}A A^{-1}-Q_m^{\top}AQ_mT_m^{-1}Q_m^{\top}
=Q_m^{\top}-T_mT_m^{-1}Q_m^{\top}=0,
\]
so by symmetry $\Sigma_m^{\perp}(AQ_m)=0$, and $A\,\mathrm{range}(Q_m)$, of dimension $m$, lies in the null space. For positive semi-definiteness, the operator $P=Q_mT_m^{-1}Q_m^{\top}A$ satisfies $P^2=P$ and $\langle Px,(I-P)y\rangle_{A^{-1}}=0$ for all $x, y \in \mathbb{R}^d$, so it is the projection onto $\mathrm{range}(Q_m)$ that is orthogonal in the $A^{-1}$-inner product $\langle x,y\rangle_{A^{-1}}=x^{\top}A^{-1}y$. Then $\Sigma_m^{\perp}=A^{-1}-Q_mT_m^{-1}Q_m^{\top}=(I-P)A^{-1}(I-P)^{\top}$, a congruence of the SPD matrix $A^{-1}$, so $w^{\top}\Sigma_m^{\perp}w=\|(I-P)^{\top}w\|_{A^{-1}}^2\ge0$ for every $w$, with equality if and only if  $(I-P)^{\top}w=0$, i.e.\ $w\in A\,\mathrm{range}(Q_m)$. Hence $\Sigma_m^{\perp}\succeq0$ with rank $d-m$.
\end{proof}

Rank $d-m$ places \eqref{eq:cov} in the same regime as
\citet[Prop.~3]{cockayne2019bayesiancg}: after $m$ linear observations, exactly $m$ directions are determined and $d-m$ remain uncertain, and no conditioning on $m$ functionals can do better. The two methods differ not in how many directions remain uncertain but in \emph{how much} uncertainty is assigned to each. Under \eqref{eq:bcg_post} that assignment is inherited from $\Sigma_0$ and carries no information about $A$, under \eqref{eq:cov} it is $S_\perp^{-1}$, the Schur complement, whose spectrum is determined by the operator. We estimate this Schur complement in Section~\ref{sec:pslq} using a matrix-free estimator.

\paragraph{Two uses of the decomposition.}
Combining \eqref{eq:mean} and \eqref{eq:cov} gives the Gaussian belief
\begin{equation}
  A^{-1} v \sim \mathcal{N}\!\left( Q_m T_m^{-1} Q_m^\top v,\ \Sigma_m^\perp \right),
  \label{eq:belief}
\end{equation}
whose the  mean is the $m$-step Lanczos estimate and the  covariance is the remainder of \eqref{eq:recall_identity}. Since $\Sigma_m^\perp \to 0$ as $m \to d$, the belief concentrates on the exact value in the same limit in which \eqref{eq:recall_identity} becomes an identity.

We use \eqref{eq:recall_identity} in two ways below. In  a deterministic way , it is a matrix-free approximation of $A^{-1}$, used as a covariance for posterior sampling, restoring the remainder removes the overconfidence of the truncated sampler (Section~\ref{sec:exp_laplace}). In a probabilistic way , $\Sigma_m^\perp$ is the covariance of the belief \eqref{eq:belief} about the exact action $A^{-1}v$, estimated matrix-free by P-SLQ, it supplies the computational uncertainty carried into trace estimation (Section~\ref{sec:exp_pslq}). Both use the same three ingredients which are   the boundary vectors $q_\parallel$ and $q_\perp$,
and the isotropic bulk $\omega(m)$, obtained from one Lanczos run.

The boundary vectors and the bulk term form the  foundation for both interpretations: they capture the variance that truncation discarded, and both can be recovered using a single Lanczos run.
 
\section{Projected Stochastic Lanczos Quadrature (P-SLQ)}
\label{sec:pslq}
\subsection{Complement Invariance via Re-projection}

\begin{theorem}[Complement Invariance]
\label{thm:complement_invariance}
Let $\Pm = Q_m Q_m^\top$ and $\Pmp  = I - \Pm$ be orthogonal projectors and an integer $l\ge1$.
Assume $v_1 \in \mathrm{range}(\Pmp )$, and define the re-projected Lanczos iteration
\begin{equation}
  w_j = A v_j - \alpha_j v_j - \beta_{j-1} v_{j-1}, 
  \qquad
  v_{j+1} = \frac{\Pmp  w_j}{\|\Pmp  w_j\|}.
\end{equation}
Then for all $1 \le j \le l$,
\begin{equation}
  v_j \in \mathrm{range}(\Pmp ),
  \qquad\text{and hence}\qquad
  \mathrm{span}\{v_1,\dots,v_l\} \subseteq \mathrm{range}(\Pmp ).
\end{equation}
\end{theorem}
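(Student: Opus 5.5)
The plan is a direct induction on $j$, using only that $\Pmp$ is an orthogonal projector, so $\Pmp\Pmp=\Pmp$ and $\mathrm{range}(\Pmp)$ is a linear subspace. We take the convention $v_0=0$ (equivalently $\beta_0=0$), so the recurrence is well defined at $j=1$. We also read the statement under the standing assumption that the iteration does not break down, i.e.\ $\|\Pmp w_j\|\neq 0$ for $j<l$; otherwise the process is stopped early, and the claim holds trivially for the vectors actually produced.

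\emph{Base case.} By hypothesis $v_1\in\mathrm{range}(\Pmp)$, and $v_0=0$ lies in every subspace.

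\emph{Inductive step.} Suppose $v_1,\dots,v_j\in\mathrm{range}(\Pmp)$. The new vector is $v_{j+1}=\Pmp w_j/\|\Pmp w_j\|$, which is a nonzero scalar multiple of $\Pmp w_j$. Any vector of the form $\Pmp x$ lies in $\mathrm{range}(\Pmp)$ by definition, whatever $x$ is, and a scalar multiple of it stays there. Hence $v_{j+1}\in\mathrm{range}(\Pmp)$. This holds even though $Av_j$ itself may leave the complement; that leakage is exactly the failure mode of naive SLQ noted in Section~\ref{sec:rw_trace}. The key point is that the projection is applied \emph{after} forming $w_j$, so the new vector is in the complement by construction. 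The induction hypothesis is not even needed for $v_{j+1}$; it matters only for the base vector.

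\emph{Span claim.} Since each $v_j$ with $1\le j\le l$ lies in the subspace $\mathrm{range}(\Pmp)$, and subspaces are closed under linear combinations, $\mathrm{span}\{v_1,\dots,v_l\}\subseteq\mathrm{range}(\Pmp)$.

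There is no genuine obstacle here; the only delicate points are bookkeeping. First, the breakdown case $\Pmp w_j=0$ must be handled separately. Second, the argument should note that the coefficients $\alpha_j=v_j^\top Av_j$ and $\beta_{j-1}$ play no role in the membership argument. A useful supplementary remark is that, because $v_j=\Pmp v_j$, one has $\alpha_j=v_j^\top(\Pmp A\Pmp)v_j$ and $\Pmp w_j=A_{m,\perp}v_j-\alpha_jv_j-\beta_{j-1}v_{j-1}$. So the re-projected iteration coincides with ordinary Lanczos applied to the compressed operator $A_{m,\perp}$ of \eqref{eq:Bdef} started in its range. This identification is what later results will use, but it is not needed for the invariance claim itself.
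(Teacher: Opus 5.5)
Your proof is correct and follows the same route as the paper: an induction on $j$ in which $v_{j+1}$ is a normalized image under the orthogonal projector $P_m^{\perp}$, and therefore lies in $\mathrm{range}(P_m^{\perp})$ no matter where $A v_j$ lands. You also add two accurate refinements the paper omits: the induction hypothesis is never actually used beyond the base vector, and the breakdown case $P_m^{\perp} w_j = 0$ must be excluded or treated as early termination.
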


\begin{proof}
We prove it by induction.\\
\textbf{Base case:}
By assumption, $v_1 \in \mathrm{range}(\Pmp )$ i.e.,
\begin{equation}
  \Pmp  v_1 = v_1.
\end{equation}
\textbf{Inductive step:}
Assume $v_j$ and $v_{j-1}$   belong to $ \mathrm{range}(\Pmp )$.
Consider
\begin{equation}
  w_j = A v_j - \alpha_j v_j - \beta_{j-1} v_{j-1}.
\end{equation}
This vector is not necessarily in $\mathrm{range}(\Pmp )$, since $A$ may mix $\mathrm{range}(\Pm)$ and $\mathrm{range}(\Pmp )$.
since $P_{m}^{\perp}$ is an orthogonal projector satisfying $(P_{m}^{\perp})^2 = P_{m}^{\perp}$,
it maps any vector onto $\mathrm{range}(\Pmp )$. Then we have,
\begin{equation}
  \Pmp  w_j \in \mathrm{range}(\Pmp ).
\end{equation}
After normalization,
\begin{equation}
  v_{j+1} = \frac{\Pmp  w_j}{\|\Pmp  w_j\|} \in \mathrm{range}(\Pmp ).
\end{equation}
Thus, by induction, $v_j \in \mathrm{range}(\Pmp )$ for all $j$.
\end{proof}

\begin{remark}
\label{rem:compressed_operator}
The re-projected Lanczos iteration is equivalent to applying Lanczos to the compressed operator
\begin{equation}
  A_\perp = \Pmp  A\Pmp .
\end{equation}
Indeed, since $v_j \in \mathrm{range}(\Pmp )$, we have $\Pmp  v_j = v_j$, and thus
\begin{equation}
  \Pmp  A v_j = \Pmp  A \Pmp  v_j = A_\perp v_j.
\end{equation}
Therefore, the iteration can be written as
\begin{equation}
  v_{j+1} = \frac{A_\perp v_j - \alpha_j v_j - \beta_{j-1} v_{j-1}}{\| A_\perp v_j - \alpha_j v_j - \beta_{j-1} v_{j-1} \|},
\end{equation}
which is the Lanczos process applied to $A_\perp$.

\end{remark}
With complement invariance guaranteed, Lanczos on the complement reflects the spectral density of $A$ restricted to $\mathrm{range}(\Pmp )$.

\subsection{Consistency and Convergence of P-SLQ}

\begin{theorem}[Gauss-Lanczos Quadrature Error, after \citealp{ubaru2017fast}]
\label{thm:gauss_lanczos}
Let $A_{m,\perp} = P_m^\perp A P_m^\perp$ and let $\Theta_l$ be the tridiagonal matrix
produced by $l$ ($l \ll d-m$) steps of the reprojected Lanczos iteration of
Theorem~\ref{thm:complement_invariance}, started from
$\hat u \in \mathrm{range}(P_m^\perp)$ with $\|\hat u\|_2 = 1$. Then we have 
\begin{equation}
    \left| \hat u^\top A_{m,\perp}^{+} \hat u - e_1^\top \Theta_l^{-1} e_1 \right|
    \;\leq\; C_{\rho_\perp}\,\rho_\perp^{-2l},
\end{equation}
where $\rho_\perp = \frac{\sqrt{\kappa^\perp(m)}+1}{\sqrt{\kappa^\perp(m)}-1} > 1$,
$\kappa^\perp(m)$ is the
complement condition number (Definition~\ref{def:kappa}), and
$C_{\rho_\perp} > 0$ depends only on the spectrum of $T_{m,\perp}$.
\end{theorem}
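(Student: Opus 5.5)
The plan is to reduce the statement to the classical Gauss quadrature error bound for $f(x)=x^{-1}$ applied to the compressed operator $T_{m,\perp}$, using Remark~\ref{rem:compressed_operator} to identify the re-projected iteration with ordinary Lanczos. First, by Theorem~\ref{thm:complement_invariance} and Remark~\ref{rem:compressed_operator}, the vectors $v_1=\hat u,\dots,v_l$ lie in $\mathrm{range}(P_m^\perp)$, and $\Theta_l$ is exactly the Lanczos tridiagonal of $A_{m,\perp}$ started at $\hat u$. Writing $\hat u=Q_{m,\perp}y$ with $\|y\|_2=1$, the identity $A_{m,\perp}=Q_{m,\perp}T_{m,\perp}Q_{m,\perp}^\top$ shows that $\Theta_l$ is also the Lanczos tridiagonal of the SPD matrix $T_{m,\perp}$ started at $y$; the isometry $Q_{m,\perp}$ intertwines the two recurrences. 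By \eqref{eq:Bpinv}, $\hat u^\top A_{m,\perp}^{+}\hat u=y^\top T_{m,\perp}^{-1}y$. Everything therefore takes place on the SPD matrix $T_{m,\perp}$ with spectrum $[\mu_{d-m},\mu_1]$, and the pseudo-inverse issue disappears.

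Second, I will express both quantities as integrals. Let $\nu_y=\sum_j (w_j^\top y)^2\delta_{\mu_j}$ be the spectral measure of $T_{m,\perp}$ with respect to $y$. Then $y^\top T_{m,\perp}^{-1}y=\int x^{-1}\,d\nu_y$. The standard Golub--Meurant identification says that $e_1^\top\Theta_l^{-1}e_1$ is the $l$-point Gauss quadrature rule for $\nu_y$, whose nodes are the Ritz values, which lie in $[\mu_{d-m},\mu_1]$. Gauss quadrature is exact for polynomials of degree $\le 2l-1$, so for any such polynomial $p$ the error is at most $\bigl(\nu_y(\mathbb R)+\sum_k\omega_k\bigr)\max_{[\mu_{d-m},\mu_1]}|x^{-1}-p(x)| = 2\min_p\|x^{-1}-p\|_\infty$. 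Here I use that the weights are positive and sum to $\|y\|^2=1$. If $l\ge d-m$ the rule is exact and there is nothing to prove, so I can assume $l<d-m$ (consistent with $l\ll d-m$).

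Third, I bound the best uniform polynomial approximation of $1/x$ on $[a,b]=[\mu_{d-m},\mu_1]$. The classical Chebyshev result (via analyticity of $1/x$ in the Bernstein ellipse with foci $a,b$ passing through $0$) gives
\[
E_{n}(1/x;[a,b])\le \frac{(\sqrt{\kappa}+1)^2}{2a}\,\rho^{-n},\qquad \rho=\frac{\sqrt\kappa+1}{\sqrt\kappa-1},
\]
with $\kappa=b/a=\kappa^\perp(m)$. Taking $n=2l-1$ yields the bound $C_{\rho_\perp}\rho_\perp^{-2l}$ with $C_{\rho_\perp}=\rho_\perp(\sqrt{\kappa^\perp}+1)^2/\mu_{d-m}$, which depends only on the spectrum of $T_{m,\perp}$. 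This is the form cited from \citet{ubaru2017fast}. The case $\kappa^\perp=1$ (so $\rho_\perp=\infty$) is exact and trivial.

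The main obstacle is the first step: checking rigorously that the re-projected recurrence, with its coefficients $\alpha_j=v_j^\top Av_j=v_j^\top A_{m,\perp}v_j$, generates exactly the Lanczos tridiagonal of $T_{m,\perp}$, and that the Gauss-rule interpretation holds without early breakdown. If breakdown occurs before step $l$, the Krylov space is invariant and the rule is exact, so I will handle that case separately. The approximation-theory constant is routine by comparison. I would take it directly from the Chebyshev series of $1/x$ rather than rederive it.
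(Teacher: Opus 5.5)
Your argument is correct. The paper gives no proof of this statement: it imports the bound from \citet{ubaru2017fast} and relies informally on Remark~\ref{rem:compressed_operator} for the transfer to the complement. Your proposal is therefore best read as the standard mechanism behind that citation, written out: the $l$-node Gauss rule is exact up to degree $2l-1$, its weights are positive and sum to $\|y\|_2^2=1$, and its nodes lie in $[\mu_{d-m},\mu_1]$, so the error is at most $2E_{2l-1}(1/x)$. The step you flag as the main obstacle is in fact immediate once $\beta_j:=\|P_m^\perp w_j\|_2$, the normaliser in Theorem~\ref{thm:complement_invariance}. Because $v_j,v_{j-1}\in\mathrm{range}(P_m^\perp)$, one has $P_m^\perp w_j=A_{m,\perp}v_j-\alpha_jv_j-\beta_{j-1}v_{j-1}$ with $\alpha_j=v_j^\top A_{m,\perp}v_j$.

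Your version gives two things the bare citation does not.

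First, the isometric reduction $\hat u=Q_{m,\perp}y$ turns the iteration into ordinary Lanczos on the SPD matrix $T_{m,\perp}$. This removes the pseudo-inverse and makes the breakdown case transparent. It also shows the constant depends only on $\mu_1$ and $\mu_{d-m}$ and is uniform in $\hat u$. That uniformity is exactly what the proof of Theorem~\ref{thm:pslq_consistency} uses when it bounds $|\varepsilon_l|$ deterministically over all probes.

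Second, the rate. The general theorem of \citet{ubaru2017fast} needs $f$ analytic \emph{and bounded} by some $M_\rho$ in the open Bernstein ellipse $E_\rho$. The ellipse with foci $\mu_{d-m},\mu_1$ and parameter $\rho_\perp$ passes exactly through the pole of $1/x$ at $0$, so $M_{\rho_\perp}=\infty$. A literal application of that theorem therefore yields only $\rho'^{-2l}$ for $\rho'<\rho_\perp$. Your explicit Chebyshev expansion of $1/x$ is what actually delivers $\rho_\perp^{-2l}$ with a finite constant. Your parenthetical ``via analyticity in the ellipse through $0$'' would not do this by itself, so lean on the explicit series, as you say at the end.

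Your constant is valid but loose. Chebyshev's exact formula $E_n(1/x;[a,b])=\tfrac12(a^{-1}-b^{-1})\rho^{-n}$ gives $2E_{2l-1}=C_{\rho_\perp}\rho_\perp^{-2l}$ with $C_{\rho_\perp}=(\sqrt{\kappa^\perp(m)}+1)^2/\mu_1$. This constant also stays bounded as $\kappa^\perp(m)\to1$.
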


\begin{theorem}[P-SLQ Consistency]
\label{thm:pslq_consistency}
Let $\hat{t}_{m,\perp}$ be the \emph{Projected Stochastic Lanczos
Quadrature} estimator (Algorithm~\ref{alg:pslq}) with $N$ independent probe
vectors and $l$ quadrature nodes per probe ($l\ll d-m$). Define the target
scalar
\begin{equation}
  t_{m,\perp} \;=\; \tr\!\bigl((\Pmp A\Pmp)^{+}\bigr)
        \;=\; \tr\!\bigl(A_{m,\perp}^{+}\bigr)
        \;=\; \tr\!\bigl(T_{m,\perp}^{-1}\bigr).
  \label{eq:omega_target}
\end{equation}
Then we have
\begin{enumerate}[label=(\roman*), leftmargin=2em]
  \item \textbf{Bias:}
  \begin{equation}
    \bigl|\,\E[\hat{t}_{m,\perp}] - t_{m,\perp}\,\bigr|
    \;\leq\;
    C_{\rho_{\perp}}\,\rho_{\perp}^{-2l}\,\tr\!\bigl(\Pmp\bigr)
    \;=\;
    C_{\rho_{\perp}}\,\rho_{\perp}^{-2l}\,(d-m),
    \label{eq:bias_bound}
  \end{equation}
  where $C_{\rho_{\perp}}>0$ and
  $\rho_{\perp}=\frac{\sqrt{\kappa^{\perp}(m)}+1}{\sqrt{\kappa^{\perp}(m)}-1}>1$ are
  the constants of Theorem~\ref{thm:gauss_lanczos} (depending only on the
  spectrum of $T_{m,\perp}$). In particular the bias decays \emph{exponentially}
  in $l$.

  \item \textbf{Variance:}
  \begin{equation}
    \mathrm{Var}[\hat{t}_{m,\perp}]
     \leq 
    \frac{1}{N}\Bigl(\sqrt{2}\,\bigl\|A_{m,\perp}^{+}\bigr\|_F
      + C_{\rho_\perp}\rho_{\perp}^{-2l}\,(d-m+1)\Bigr)^{2}
    \;\leq\;
    \frac{2\,(d-m)\,\lambda_d^{-2}}{N}\,\bigl(1+r_l\bigr)^{2},
    \label{eq:variance_bound}
  \end{equation}
  where
  $r_l = \frac{C_{\rho_\perp}\rho_{\perp}^{-2l}(d-m+1)}{
  \sqrt{2}\,\|A_{m,\perp}^{+}\|_F}\to0$ when $l\to\infty$. In
  particular $\mathrm{Var}[\hat{t}_{m,\perp}]=\mathcal{O}(N^{-1})$ for every
  fixed $l$, and the pure Monte Carlo bound
  $\frac{2\|A_{m,\perp}^{+}\|_F^{2}}{N}$ is recovered in the limit $l\to\infty$.
  \item \textbf{Almost sure convergence:} For every fixed $l$,
  $\hat{t}_{m,\perp}\xrightarrow{\mathrm{a.s.}}
  \E[S_1]$ as $N\to\infty$, with
  $|\E[S_1]-t_{m,\perp}|\le C_{\rho_\perp}\rho_\perp^{-2l}(d-m)$ by
  \textup{(i)}. When  $l_N\to\infty$, we have
  \begin{equation}
    \hat{t}_{m,\perp}^{(N,\,l_N)}
    \xrightarrow{\mathrm{a.s.}} t_{m,\perp}
    \qquad (N\to\infty).
    \label{eq:as_convergence}
  \end{equation}

\end{enumerate}
\end{theorem}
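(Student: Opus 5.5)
We first fix the form of the estimator, since Algorithm~\ref{alg:pslq} is not displayed above. We take it to be $\hat t_{m,\perp}=\frac1N\sum_{i=1}^N S_i$, where each sample uses a probe $z_i$ with $\E[z_iz_i^\top]=I$ (Rademacher or Gaussian). The probe is projected to $x_i=\Pmp z_i$, and the sample is
\[
S_i=\|x_i\|_2^2\,e_1^\top\Theta_l^{(i)\,-1}e_1,
\]
where $\Theta_l^{(i)}$ comes from the re-projected Lanczos run of Theorem~\ref{thm:complement_invariance} started at $x_i/\|x_i\|_2$. Because $\Pmp$ is built from $v_{\mathrm{ref}}$ before the probes are drawn, the $S_i$ are i.i.d. By Remark~\ref{rem:compressed_operator}, each run is ordinary Lanczos on $A_{m,\perp}$ inside $\mathrm{range}(\Pmp)$.

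\textbf{Bias.} We write $S_i=x_i^\top A_{m,\perp}^+x_i+\varepsilon_i$. Hutchinson's identity gives
\[
\E[x_i^\top A_{m,\perp}^+x_i]=\tr(\Pmp A_{m,\perp}^+\Pmp)=\tr(A_{m,\perp}^+)=t_{m,\perp},
\]
using $\Pmp A_{m,\perp}^+\Pmp=A_{m,\perp}^+$ together with \eqref{eq:trace-id}. For the quadrature error, Theorem~\ref{thm:gauss_lanczos} is applied to the unit vector $x_i/\|x_i\|$. The constant $C_{\rho_\perp}$ depends only on the spectrum of $T_{m,\perp}$, not on the starting vector, so $|\varepsilon_i|\le C_{\rho_\perp}\rho_\perp^{-2l}\|x_i\|^2$. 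Taking expectations with $\E\|x_i\|^2=\tr(\Pmp)=d-m$ gives (i).

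\textbf{Variance.} Since the samples are i.i.d., $\mathrm{Var}[\hat t]=\mathrm{Var}[S_1]/N$. We bound the standard deviation with Minkowski's inequality:
\[
\sqrt{\mathrm{Var}[S_1]}\le\sqrt{\mathrm{Var}[x^\top A_{m,\perp}^+x]}+\sqrt{\E[\varepsilon^2]}.
\]
For Gaussian probes the first term equals $\sqrt2\|A_{m,\perp}^+\|_F$, and for Rademacher probes it is at most this value. For the second term, $\E\varepsilon^2\le C_{\rho_\perp}^2\rho_\perp^{-4l}\E\|x\|^4$. The fourth moment satisfies
\[
\E\|x\|^4\le(d-m)^2+2(d-m)\le(d-m+1)^2 ,
\]
so $\sqrt{\E\varepsilon^2}\le C_{\rho_\perp}\rho_\perp^{-2l}(d-m+1)$, which gives the first inequality. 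For the second, factor out $\sqrt2\|A_{m,\perp}^+\|_F$ to obtain the $(1+r_l)^2$ form. Then use $\|A_{m,\perp}^+\|_F^2=\sum_j\mu_j^{-2}\le(d-m)\lambda_d^{-2}$, which holds by \eqref{eq:interlace}. We expect this step to be the delicate one. The difficulty is exactly the uniformity of $C_{\rho_\perp}$ over starting vectors, together with the fourth moment of $\|\Pmp z\|$ for the chosen probe law. If $C_{\rho_\perp}$ does depend on the start vector, we would instead use the crude bound $|\varepsilon_i|\le 2\|x_i\|^2\lambda_d^{-1}$ to guarantee finite variance, and keep the exponential factor from Theorem~\ref{thm:gauss_lanczos} as stated.

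\textbf{Almost sure convergence.} For fixed $l$, $S_1$ is integrable (it has finite variance), so the strong law gives $\hat t\to\E[S_1]$ almost surely, with the bias controlled by (i). When $l_N\to\infty$ we face a triangular array. We decompose
\[
\hat t-t_{m,\perp}=\Bigl(\tfrac1N\sum_i x_i^\top A_{m,\perp}^+x_i-t_{m,\perp}\Bigr)+\tfrac1N\sum_i\varepsilon_i^{(l_N)} .
\]
The first term tends to zero almost surely by the strong law, since it does not depend on $l$. The second is bounded by $C_{\rho_\perp}\rho_\perp^{-2l_N}\cdot\frac1N\sum_i\|x_i\|^2$. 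Here $\frac1N\sum_i\|x_i\|^2\to d-m$ almost surely, again by the strong law, while $\rho_\perp^{-2l_N}\to0$. So the second term also tends to zero almost surely, which proves \eqref{eq:as_convergence}.
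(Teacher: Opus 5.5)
Your proposal is correct. For (i), (ii) and the fixed-$l$ half of (iii) it matches the paper's proof:
\begin{itemize}
\item the same split of $S_i$ into the Hutchinson form $x_i^{\top}A_{m,\perp}^{+}x_i$ plus a quadrature error bounded deterministically by $C_{\rho_\perp}\rho_\perp^{-2l}\|x_i\|_2^{2}$, using that the constant of Theorem~\ref{thm:gauss_lanczos} is uniform over unit start vectors;
\item the same fourth moment $(d-m)(d-m+2)\le(d-m+1)^{2}$;
\item the same $L^{2}$ triangle inequality.
\end{itemize}

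The one genuinely different step is the $l_N\to\infty$ claim.

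The paper imports the Hanson--Wright tail \eqref{eq:hw_tail} from the proof of Corollary~\ref{cor:pslq_rate}. This gives $\Pr\bigl(|\hat t_{m,\perp}^{(N,l_N)}-t_{m,\perp}|>\varepsilon\bigr)\le 4e^{-c_\varepsilon N}$ for large $N$, and the paper concludes by Borel--Cantelli.

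You instead note that only the error term depends on $l$. The Hutchinson average therefore converges almost surely by the strong law. The error is dominated by $C_{\rho_\perp}\rho_\perp^{-2l_N}\cdot\frac1N\sum_i\|x_i\|_2^{2}$, a deterministic null sequence times an almost surely convergent average.

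Your argument is more elementary and self-contained. First moments and the deterministic quadrature envelope suffice, with no concentration inequality and no forward reference. It does implicitly assume that the estimators for different $N$ use initial segments of one i.i.d.\ probe sequence.

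The paper's summable exponential tails give complete convergence. That yields almost sure convergence whatever the joint law of the probe sets across $N$, for instance fresh probes for every $N$. It also delivers a quantitative concentration rate along the way.
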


\begin{proof}
Throughout, $\xi\sim\mathcal{N}(0,I_d)$ denotes a generated probe vector,
$u=\Pmp\xi$ its projection onto the Krylov complement, and
$\hat u=\frac{u}{\norm{u}}$ its normalised version. We have $A_{m,\perp}=\Pmp A\Pmp$, with
$A_{m,\perp}^{+}=Q_{m,\perp}T_{m,\perp}^{-1} Q_{m,\perp}^{\top}$ and $A_{m,\perp}^{+}\Pmp=\Pmp A_{m,\perp}^{+}=A_{m,\perp}^{+}$.

\medskip
\noindent\textbf{Decomposing the P-SLQ estimator.}
For each probe $i$, Algorithm~\ref{alg:pslq} computes $u_i=\Pmp\xi_i$, runs $l$
reprojected Lanczos steps from $\hat u_i=\frac{u_i}{\norm{u_i}}$ (reprojecting onto
$\mathrm{range}(\Pmp)$ at each step), forms the $l\times l$ tridiagonal
$\Theta_l^{(i)}$, and returns the single-probe estimate
\begin{equation}
  S_i = \norm{u_i}^2\cdot e_1^\top\bigl(\Theta_l^{(i)}\bigr)^{-1}e_1.
  \label{eq:single_probe}
\end{equation}
The full estimator averages over $N$ independent probes:
\begin{equation}
  \hat{t}_{m,\perp}
  = \frac{1}{N}\sum_{i=1}^N S_i
  = \bar S,
  \qquad \bar S=\frac{1}{N}\sum_{i=1}^N S_i.
  \label{eq:estimator_decomp}
\end{equation}
Since the $\xi_i$ are i.i.d., so are the $S_i$. We therefore analyse $\E[S_1]$
and $\mathrm{Var}[S_1]$ for a single probe and use the fact they are independents.

\medskip
\noindent\textbf{Step 1: Gauss quadrature on the complement.}
By Theorem~\ref{thm:complement_invariance}, every Lanczos vector in the
reprojected run from $\hat u$ remains in $\mathrm{range}(\Pmp)$. The process  builds the tridiagonal $\Theta_l$ of $A$ restricted to
$\mathrm{range}(\Pmp)$ i.e.\ of the compressed operator $A_{m,\perp}$, and $e_1^\top\Theta_l^{-1}e_1$ is the $l$-node Gauss quadrature approximation of
\begin{equation}
  \hat u^\top A_{m,\perp}^{+}\hat u
  = \int_{\mu_{d-m}}^{\mu_1}\frac{1}{x}\,d\mu_{\hat u}(x),
  \label{eq:rayleigh_integral}
\end{equation}
where $\mu_{\hat u}$ is the spectral measure of $A_{m,\perp}$ with respect to $\hat u$ (Section~\ref{sec:pslq}), the integral is over the spectrum of $A_{m,\perp}$ restricted to $\mathrm{range}(\Pmp)$, which excludes $0$ since $\hat u\in\mathrm{range}(\Pmp)$. By Theorem~\ref{thm:gauss_lanczos},
\begin{equation}
  \bigl|\,e_1^\top\Theta_l^{-1}e_1-\hat u^\top A_{m,\perp}^{+}\hat u\,\bigr|
  \;\leq\; C_{\rho_\perp}\,\rho_\perp^{-2l}.
  \label{eq:quad_error}
\end{equation}

\medskip
\noindent\textbf{Step 2: Expectation of a single probe.}
Using \eqref{eq:single_probe} and writing
$\varepsilon_l=e_1^\top\Theta_l^{-1}e_1-\hat u^\top A_{m,\perp}^{+}\hat u$ with
$|\varepsilon_l|\le C_{\rho_\perp}\rho_\perp^{-2l}$ by \eqref{eq:quad_error},
\begin{equation}
  \E[S_1]
  =\E\!\left[\norm{u}^2\,e_1^\top\Theta_l^{-1}e_1\right]
  =\E\!\left[\norm{u}^2\,\hat u^\top A_{m,\perp}^{+}\hat u\right]
  +\E\!\left[\norm{u}^2\,\varepsilon_l\right].
  \label{eq:ES1_split}
\end{equation}
Since $\hat u=\frac{u}{\norm{u}}$, the first expectation simplifies:
\begin{equation}
  \E\!\left[\norm{u}^2\,\hat u^\top A_{m,\perp}^{+}\hat u\right]
  =\E\!\left[u^\top A_{m,\perp}^{+}u\right]
  =\E\!\left[\xi^\top \Pmp A_{m,\perp}^{+}\Pmp\,\xi\right]
  =\E\!\left[\xi^\top A_{m,\perp}^{+}\xi\right],
  \label{eq:simplify}
\end{equation}
using $u=\Pmp\xi$ and $\Pmp A_{m,\perp}^{+}\Pmp=A_{m,\perp}^{+}$ (since
$\mathrm{range}(A_{m,\perp}^{+})\subseteq\mathrm{range}(\Pmp)$). Hutchinson's identity,
$\E_\xi[\xi^\top M\xi]=\tr(M)$ for symmetric $M$ and $\xi\sim\mathcal{N}(0,I_d)$,
applied with $M=A_{m,\perp}^{+}$ gives
\begin{equation}
  \E\!\left[u^\top A_{m,\perp}^{+}u\right]=\tr\!\bigl(A_{m,\perp}^{+}\bigr).
  \label{eq:hutch_applied}
\end{equation}
For the error term, $|\varepsilon_l|\le C_{\rho_\perp}\rho_\perp^{-2l}$
deterministically and $\E[\norm{u}^2]=\tr(\Pmp)=d-m$, so
\begin{equation}
  \bigl|\E[\norm{u}^2\varepsilon_l]\bigr|
  \;\le\; C_{\rho_\perp}\rho_\perp^{-2l}\,(d-m).
  \label{eq:error_term}
\end{equation}
Combining \eqref{eq:ES1_split}--\eqref{eq:error_term},
\begin{equation}
  \E[S_1]=\tr\!\bigl(A_{m,\perp}^{+}\bigr)
          +\mathcal{O}\!\bigl((d-m)\rho_\perp^{-2l}\bigr).
  \label{eq:ES1_final}
\end{equation}
Since $\hat{t}_{m,\perp}= \bar S$ with i.i.d.\ $S_i$, $\E[\hat{t}_{m,\perp}]=\E[S_1]$, so
\begin{equation}
  \E[\hat{t}_{m,\perp}]
  =\tr(A_{m,\perp}^{+})+\mathcal{O}\!\bigl((d-m)\rho_\perp^{-2l}\bigr)
  =t_{m,\perp}+\mathcal{O}\!\bigl((d-m)\rho_\perp^{-2l}\bigr),
\end{equation}
which establishes \eqref{eq:bias_bound}.\hfill$\checkmark$

\medskip
\noindent\textbf{Step 3: Variance.}
Since the $S_i$ are i.i.d., $\mathrm{Var}[\hat{t}_{m,\perp}]=\frac{\mathrm{Var}[S_1]}{N}$. Write $S_1=X+Y$ with $X=\xi^{\top}A_{m,\perp}^{+}\xi$ and $Y=\norm{u}^{2}\varepsilon_l$, where $u=\Pmp\xi$ and $|\varepsilon_l|\le C_{\rho_\perp}\rho_\perp^{-2l}$ deterministically by
\eqref{eq:quad_error}. For the first part, the Gaussian quadratic-form
variance \citep{avron2011randomized} gives $\mathrm{Var}[X]=2\,\|A_{m,\perp}^{+}\|_F^{2}$. For the second, $\norm{u}^{2}=\xi^{\top}\Pmp\xi$ is $\chi^{2}$-distributed with $d-m$ degrees of freedom, so $\E[\norm{u}^{4}]=(d-m)(d-m+2)\le(d-m+1)^{2}$ and
\[
\mathrm{Var}[Y]\le\E[Y^{2}]
\le C_{\rho_\perp}^{2}\rho_\perp^{-4l}\E[\norm{u}^{4}]
\le C_{\rho_\perp}^{2}\rho_\perp^{-4l}(d-m+1)^{2}.
\]
The cross term is controlled by the triangle inequality for the $L^{2}$ norm of centred random variables, $\mathrm{Var}[X+Y]\le\bigl(\sqrt{\mathrm{Var}[X]}+\sqrt{\mathrm{Var}[Y]}\bigr)^{2}$,
whence
\[
\mathrm{Var}[S_1]
\le
\Bigl(\sqrt{2}\,\|A_{m,\perp}^{+}\|_F
      + C_{\rho_\perp}\rho_\perp^{-2l}(d-m+1)\Bigr)^{2}.
\]
Dividing by $N$ and using
$\|A_{m,\perp}^{+}\|_F^{2}\le(d-m)\|A_{m,\perp}^{+}\|_2^{2}\le(d-m)\lambda_d^{-2}$
(by \eqref{eq:interlace}, $\mu_{d-m}\ge\lambda_d$) establishes
\eqref{eq:variance_bound}.\hfill$\checkmark$
 
\medskip
\noindent\textbf{Step 4: Almost sure convergence.}
For fixed $l$, the $S_i$ are i.i.d.\ with finite mean and variance
(Steps 2--3), so the strong law of large numbers gives
$\hat{t}_{m,\perp}\xrightarrow{\mathrm{a.s.}}\E[S_1]$, and
$|\E[S_1]-t_{m,\perp}|\le C_{\rho_\perp}\rho_\perp^{-2l}(d-m)$ by Step~2. For the joint statement, let $l_N\to\infty$ be arbitrary and fix $\varepsilon>0$. By the concentration inequality \eqref{eq:hw_tail} established in the proof of Corollary~\ref{cor:pslq_rate} below (which does not rely on this step), there are constants such that, for all $N$ large enough that the bias $C_{\rho_\perp}\rho_\perp^{-2l_N}(d-m)\le \frac{\varepsilon}{2}$,
\[
\Pr\bigl(|\hat{t}_{m,\perp}^{(N,l_N)}-t_{m,\perp}|>\varepsilon\bigr)
\;\le\; 4\exp(-c_\varepsilon N)
\]
for a constant $c_\varepsilon>0$ depending on $\varepsilon$, $d-m$ and
$\lambda_d$ but not on $N$. The right-hand side is summable in $N$, so by the Borel-Cantelli lemma $|\hat{t}_{m,\perp}^{(N,l_N)}-t_{m,\perp}|>\varepsilon$ occurs only finitely often almost surely. Since $\varepsilon>0$ was arbitrary, \eqref{eq:as_convergence} follows.\hfill$\square$
\end{proof}

\begin{remark}
\label{rem:pslq_role}
Theorem~\ref{thm:pslq_consistency} concerns the compressed trace
$\mathrm{tr}(T_{m,\perp}^{-1})$, which is the input to the coupling correction of Section~\ref{sec:sperp_estimation}, not the bulk magnitude $\omega(m)$ of \eqref{eq:omega_def} directly. The corrected trace $\mathrm{tr}(S_{\perp}^{-1})$ is obtained from the P-SLQ estimate by the deterministic Sherman-Morrison correction \eqref{eq:trace_SM}, whose ingredients come from a single boundary probe; since that correction is an exponentially accurate function of the boundary quadratic forms, the consistency and rate guarantees above transfer to $\mathrm{tr}(S_{\perp}^{-1})$ and hence to $\omega(m)$.
\end{remark}

\begin{corollary}[Sample Complexity of P-SLQ]
\label{cor:pslq_rate}
Let $\hat{t}_{m,\perp}$ be the P-SLQ estimator of Algorithm~\ref{alg:pslq} and $t_{m,\perp}=\tr\bigl((\Pmp A\Pmp)^{+}\bigr)$. There is an absolute constant $C>0$ such that, for every target accuracy $0<\varepsilon\le(d-m)\lambda_d^{-1}$ and confidence level $\delta\in\,]0,1[$, the choices
\begin{align}
l &\ge \frac{1}{2\log\rho_{\perp}}
\log\left(\frac{4\,C_{\rho_{\perp}}\,(d-m)}{\varepsilon}\right)
=\mathcal{O}\bigl(\log(1/\varepsilon)\bigr),
\label{eq:l_sufficient}\\[4pt]
N &\ge C\,
\max\left\{\frac{(d-m)\lambda_{d}^{-2}}{\varepsilon^{2}},
\frac{\lambda_{d}^{-1}}{\varepsilon}\right\}
\log\left(\frac{4}{\delta}\right)
=\mathcal{O}\bigl(\varepsilon^{-2}\log(1/\delta)\bigr)
\label{eq:N_sufficient}
\end{align}
guarantee $\Pr\bigl(|\hat{t}_{m,\perp}-t_{m,\perp}|\le\varepsilon\bigr)\ge1-\delta$.
\end{corollary}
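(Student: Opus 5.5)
We split the error into a deterministic bias part and a stochastic fluctuation part,
\[
|\hat{t}_{m,\perp}-t_{m,\perp}|\le |\E[S_1]-t_{m,\perp}| + |\bar S-\E[S_1]|,
\]
and spend $\varepsilon/2$ on each. Theorem~\ref{thm:pslq_consistency}(i) bounds the first term by $C_{\rho_\perp}\rho_\perp^{-2l}(d-m)$. This is at most $\varepsilon/4\le\varepsilon/2$ exactly when $\rho_\perp^{2l}\ge 4C_{\rho_\perp}(d-m)/\varepsilon$. Taking logarithms gives \eqref{eq:l_sufficient}.

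For the fluctuation we do not use Chebyshev, since Theorem~\ref{thm:pslq_consistency}(ii) would then give only a $1/\delta$ dependence. We use a sub-exponential (Bernstein) concentration instead. As in Step 3 of that proof, write $S_i=X_i+Y_i$ with $X_i=\xi_i^\top A_{m,\perp}^{+}\xi_i$ and $Y_i=\norm{\Pmp\xi_i}^2\varepsilon_l^{(i)}$.

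\textbf{The Hutchinson part.} The average $\frac1N\sum X_i$ is a quadratic form of the stacked Gaussian vector $(\xi_1,\dots,\xi_N)$ with block-diagonal matrix $\frac1N\,\mathrm{diag}(A_{m,\perp}^{+},\dots,A_{m,\perp}^{+})$. Hanson--Wright gives
\begin{equation}
\Pr\Bigl(\Bigl|\tfrac1N\textstyle\sum_i X_i-t_{m,\perp}\Bigr|>s\Bigr)\le 2\exp\Bigl(-c\,N\min\Bigl\{\frac{s^2}{\|A_{m,\perp}^{+}\|_F^2},\frac{s}{\|A_{m,\perp}^{+}\|_2}\Bigr\}\Bigr).
\label{eq:hw_tail}
\end{equation}
We then substitute $\|A_{m,\perp}^{+}\|_F^2\le(d-m)\lambda_d^{-2}$ and $\|A_{m,\perp}^{+}\|_2\le\lambda_d^{-1}$, both from \eqref{eq:interlace}, and set $s=\varepsilon/4$. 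This produces the two branches of the max in \eqref{eq:N_sufficient}, with failure probability $\delta/2$ once $N\ge C\max\{\cdot\}\log(4/\delta)$.

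\textbf{The quadrature part.} The $Y_i$ satisfy $|Y_i|\le C_{\rho_\perp}\rho_\perp^{-2l}\norm{\Pmp\xi_i}^2$, and $\norm{\Pmp\xi_i}^2$ is a $\chi^2_{d-m}$ variable. Hence
\[
\Bigl|\tfrac1N\textstyle\sum_i Y_i-\E Y_1\Bigr|\le C_{\rho_\perp}\rho_\perp^{-2l}\Bigl(\tfrac1N\textstyle\sum_i\norm{\Pmp\xi_i}^2+(d-m)\Bigr).
\]
Laurent--Massart (Hanson--Wright again, with matrix $\Pmp$) gives $\frac1N\sum_i\norm{\Pmp\xi_i}^2\le 2(d-m)$ with probability at least $1-\delta/2$, under the same $N$ condition. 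With $l$ as in \eqref{eq:l_sufficient}, the fluctuation of the $Y$ part is then at most $3(d-m)\cdot\varepsilon/(4(d-m))\cdot\tfrac13$, after adjusting constants into $C$ and the factor $4$, so it is $O(\varepsilon)$. A union bound over the two events gives total failure probability at most $\delta$, which explains the factor $4/\delta$ inside the logarithm.

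The restriction $\varepsilon\le(d-m)\lambda_d^{-1}$ ensures the quadratic branch $s^2/\|\cdot\|_F^2$ dominates up to constants. It also guarantees the logarithm in \eqref{eq:l_sufficient} is positive.

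The main obstacle is bookkeeping: the $Y_i$ depend on the probes through both $\norm{u_i}$ and $\varepsilon_l^{(i)}$, so they are not independent of the $X_i$. Bounding them deterministically by the $\chi^2$ norm and concentrating only that norm avoids this issue. The tail bound \eqref{eq:hw_tail}, in its combined form, is then also what Step 4 of Theorem~\ref{thm:pslq_consistency} invokes.
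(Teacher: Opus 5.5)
Your ingredients are the paper's: Hanson--Wright for the stacked Hutchinson form $\frac1N\sum_i X_i$ with matrix $I_N\otimes A_{m,\perp}^{+}$, a second Hanson--Wright (or Laurent--Massart) bound giving $\bar U=\frac1N\sum_i\|P_m^{\perp}\xi_i\|_2^2\le 2(d-m)$, the deterministic bound $|Y_i|\le C_{\rho_\perp}\rho_\perp^{-2l}\|P_m^{\perp}\xi_i\|_2^2$, and a union bound. However, your opening split around $\mathbb{E}[S_1]$ makes the error budget fail to close with the constants in the statement.

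The $l$ condition \eqref{eq:l_sufficient} gives exactly $C_{\rho_\perp}\rho_\perp^{-2l}\le\varepsilon/(4(d-m))$. Your three pieces are then:
\begin{itemize}
\item bias: $|\mathbb{E}[Y_1]|\le\varepsilon/4$;
\item quadrature fluctuation: $|\bar Y-\mathbb{E}[Y_1]|\le C_{\rho_\perp}\rho_\perp^{-2l}(\bar U+(d-m))\le 3\varepsilon/4$ on $\{\bar U\le 2(d-m)\}$;
\item Hutchinson fluctuation: $\varepsilon/4$.
\end{itemize}
These sum to $5\varepsilon/4$, not $\varepsilon$. The factor $\tfrac13$ you insert has no source. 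The $4$ inside the logarithm of \eqref{eq:l_sufficient} is part of the statement, so you cannot adjust it; only $C$ in the $N$ condition is free. The root cause is that you subtract $\mathbb{E}[Y_1]$ and then pay for it again as bias. The quadrature error is therefore charged as $C_{\rho_\perp}\rho_\perp^{-2l}\bigl(\bar U+2(d-m)\bigr)$ instead of $C_{\rho_\perp}\rho_\perp^{-2l}\,\bar U$.

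The fix is the paper's decomposition. Since $\mathbb{E}[X_1]=t_{m,\perp}$ exactly by Hutchinson's identity, there is no need to centre the $Y_i$. Write $\hat t_{m,\perp}-t_{m,\perp}=(\bar X-t_{m,\perp})+\bar Y$ with $|\bar Y|\le C_{\rho_\perp}\rho_\perp^{-2l}\,\bar U$, and apply Hanson--Wright at threshold $\varepsilon/2$. On $\{\bar U\le 2(d-m)\}$ the second term is at most $\varepsilon/2$, with exactly the stated factor $4$. Alternatively, keep your split but tighten the events to $\bar U\le\tfrac32(d-m)$ and a Hutchinson threshold of $\varepsilon/8$, absorbing the changes into $C$.

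Two smaller points. First, the restriction $\varepsilon\le(d-m)\lambda_d^{-1}$ is what makes the $N$ condition imply $N\ge C\log(4/\delta)/(d-m)$, and that is what the $\bar U$ event actually needs. You assert ``under the same $N$ condition'' without making this check. Second, the restriction does not make the logarithm in \eqref{eq:l_sufficient} positive, since that depends on $C_{\rho_\perp}$; positivity is not needed anyway.
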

 
\begin{proof}
Write $S_i=\widetilde S_i+\norm{u_i}^{2}\varepsilon_l^{(i)}$ with
$\widetilde S_i=\xi_i^{\top}A_{m,\perp}^{+}\xi_i$ and
$|\varepsilon_l^{(i)}|\le C_{\rho_\perp}\rho_\perp^{-2l}$ by
\eqref{eq:quad_error}, and let $\bar{\widetilde S}=\frac1N\sum_i\widetilde S_i$ and $\bar U=\frac1N\sum_i\norm{u_i}^{2}$. Since $\E[\widetilde S_1]=\tr(A_{m,\perp}^{+})=t_{m,\perp}$ exactly by Hutchinson's identity \eqref{eq:hutch_applied}, the quadrature bias is confined to the second term, and
\begin{equation}
|\hat{t}_{m,\perp}-t_{m,\perp}|
\;\le\;
|\bar{\widetilde S}-t_{m,\perp}|
+ C_{\rho_\perp}\rho_\perp^{-2l}\,\bar U .
\label{eq:cor_split}
\end{equation}
 
\emph{First term: Hanson-Wright.} Stacking the probes,
$\bar{\widetilde S}=\frac1N\,\zeta^{\top}\bigl(I_N\otimes A_{m,\perp}^{+}\bigr)\zeta$
with $\zeta\sim\mathcal{N}(0,I_{Nd})$. The Hanson--Wright inequality
\citep{rudelson2013hanson} applied to the block-diagonal matrix
$I_N\otimes A_{m,\perp}^{+}$, which satisfies
$\|I_N\otimes A_{m,\perp}^{+}\|_F^{2}=N\|A_{m,\perp}^{+}\|_F^{2}$ and
$\|I_N\otimes A_{m,\perp}^{+}\|_2=\|A_{m,\perp}^{+}\|_2$, gives for an absolute
constant $c>0$
\begin{equation}
\Pr\bigl(|\bar{\widetilde S}-t_{m,\perp}|>\tfrac{\varepsilon}{2}\bigr)
\le
2\exp\left(-c\,N\min\left\{
\frac{\varepsilon^{2}}{\|A_{m,\perp}^{+}\|_F^{2}},
\frac{\varepsilon}{\|A_{m,\perp}^{+}\|_2}\right\}\right).
\label{eq:hw_tail}
\end{equation}
Using $\|A_{m,\perp}^{+}\|_F^{2}\le(d-m)\lambda_d^{-2}$ and
$\|A_{m,\perp}^{+}\|_2\le\lambda_d^{-1}$, the choice \eqref{eq:N_sufficient} with $C=2/c$ makes the right-hand side at most $\delta/2$.
 
\emph{Second term: control of $\bar U$.} $\bar U$ is itself an average of Gaussian quadratic forms with matrix $\Pmp$, $\E[\bar U]=d-m$,
$\|\Pmp\|_F^{2}=d-m$ and $\|\Pmp\|_2=1$, so \eqref{eq:hw_tail} with
$A_{m,\perp}^{+}$ replaced by $\Pmp$ and threshold $d-m$ gives
\[
\Pr\bigl(\bar U>2(d-m)\bigr)
\le 2\exp\bigl(-c\,N(d-m)\bigr)
\le\frac{\delta}{2},
\]
the last inequality because \eqref{eq:N_sufficient} together with
$\varepsilon\le(d-m)\lambda_d^{-1}$ implies
$N\ge C\log(4/\delta)\,(d-m)\lambda_d^{-2}/\varepsilon^{2}
\ge C\log(4/\delta)/(d-m)$. On the complementary event
$\{\bar U\le2(d-m)\}$, the choice \eqref{eq:l_sufficient} yields
$\rho_\perp^{-2l}\le\varepsilon/(4C_{\rho_\perp}(d-m))$ and hence
$C_{\rho_\perp}\rho_\perp^{-2l}\,\bar U\le\varepsilon/2$.
 
\emph{Union bound.} With probability at least $1-\delta$ both events hold, and \eqref{eq:cor_split} gives $|\hat{t}_{m,\perp}-t_{m,\perp}|\le\varepsilon$.
 
The restriction $\varepsilon\le(d-m)\lambda_d^{-1}$ is harmless: since
$0\le t_{m,\perp}\le(d-m)\lambda_d^{-1}$, any larger $\varepsilon$ is met by the trivial guarantee.
\end{proof}

\subsection{Estimating the Coupling-Corrected Magnitude}
\label{sec:sperp_estimation}

Theorem~\ref{thm:pslq_consistency} estimates $\tr(T_{m,\perp}^{-1})$, the trace of the \emph{compressed} inverse. The coupling-corrected covariance \eqref{eq:Sigma_coupled} instead requires the trace of
$S_{\perp}^{-1}=(T_{m,\perp}-\gamma\,e_1e_1^{\top})^{-1}$, together with the boundary scalars $(S_{\perp}^{-1})_{11}$ and $\sigma_u$ and the coupling direction $q_{\perp}$. All of these follow from the P-SLQ estimate by a single rank-one (Sherman-Morrison) correction, at the cost of one deterministic probe started at the boundary direction $e_1=q_{m+1}$.

\paragraph{Boundary quadratic forms.}
Run the reprojected Lanczos of Theorem~\ref{thm:complement_invariance} from the boundary direction $q_{m+1}$, producing an $l\times l$ tridiagonal $\Theta$. Because the run starts at $e_1$, the rank-one downdate acts on the first Krylov coordinate, so $S_{\perp}$ is represented in these coordinates by $\Theta_{\gamma}=\Theta-\gamma\,e_1e_1^{\top}$, and Gauss quadrature gives
\begin{equation}
a_1=e_1^{\top}\Theta^{-1}e_1\approx(T_{m,\perp}^{-1})_{11},
\quad
a_2=e_1^{\top}\Theta^{-2}e_1\approx(T_{m,\perp}^{-2})_{11},
\quad
p_k=e_1^{\top}\Theta_{\gamma}^{-k}e_1\approx(S_{\perp}^{-k})_{11},
\label{eq:boundary_forms}
\end{equation}
each accurate to $O(\rho_{\perp}^{-2l})$ by Theorem~\ref{thm:gauss_lanczos}.

\begin{proposition}[Sherman-Morrison correction]
\label{prop:sherman_morrison}
Let $\gamma=\beta_m^{2}(T_m^{-1})_{mm}$ with $\gamma\,(T_{m,\perp}^{-1})_{11}<1$.
Then
\begin{align}
\tr\!\bigl(S_{\perp}^{-1}\bigr)
&=\tr\!\bigl(T_{m,\perp}^{-1}\bigr)
+\frac{\gamma\,(T_{m,\perp}^{-2})_{11}}{1-\gamma\,(T_{m,\perp}^{-1})_{11}},
\label{eq:trace_SM}\\[2pt]
\bigl(S_{\perp}^{-1}\bigr)_{11}
&=\frac{(T_{m,\perp}^{-1})_{11}}{1-\gamma\,(T_{m,\perp}^{-1})_{11}},
\label{eq:s11_SM}\\[2pt]
Q_{m,\perp}S_{\perp}^{-1}e_1
&=\frac{Q_{m,\perp}T_{m,\perp}^{-1}e_1}{1-\gamma\,(T_{m,\perp}^{-1})_{11}} .
\label{eq:g_SM}
\end{align}
\end{proposition}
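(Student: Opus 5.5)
The plan is to apply the Sherman--Morrison formula directly to the rank-one downdate $S_{\perp}=T_{m,\perp}-\gamma\,e_1e_1^{\top}$ of \eqref{eq:Sperp}, and then read off the three claimed identities by taking a trace, a $(1,1)$ entry, and a first column.

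First, write $T:=T_{m,\perp}$, which is symmetric positive definite, and set $a_1=(T^{-1})_{11}=e_1^{\top}T^{-1}e_1$. The hypothesis $\gamma a_1<1$ is exactly the nondegeneracy condition of Sherman--Morrison. It also holds automatically here, by Proposition~\ref{prop:kappa_sperp}(ii). Sherman--Morrison then gives
\[
S_{\perp}^{-1}=T^{-1}+\frac{\gamma\,T^{-1}e_1e_1^{\top}T^{-1}}{1-\gamma a_1}.
\]
I would confirm this by multiplying by $T-\gamma e_1e_1^{\top}$. The cross terms collect as $T^{-1}e_1e_1^{\top}T^{-1}$ times the coefficient $\gamma/(1-\gamma a_1)-\gamma-\gamma^{2}a_1/(1-\gamma a_1)=0$, so the product is the identity.

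Second, take traces. By cyclicity, $\tr(T^{-1}e_1e_1^{\top}T^{-1})=e_1^{\top}T^{-2}e_1=(T^{-2})_{11}$, which gives \eqref{eq:trace_SM}.

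Third, take the $(1,1)$ entry:
\[
(S_{\perp}^{-1})_{11}=a_1+\frac{\gamma a_1^{2}}{1-\gamma a_1}=\frac{a_1}{1-\gamma a_1},
\]
which is \eqref{eq:s11_SM}.

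Fourth, apply the formula to $e_1$:
\[
S_{\perp}^{-1}e_1=T^{-1}e_1\Bigl(1+\frac{\gamma a_1}{1-\gamma a_1}\Bigr)=\frac{T^{-1}e_1}{1-\gamma a_1}.
\]
Left-multiplying by $Q_{m,\perp}$ gives \eqref{eq:g_SM}.

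There is no real obstacle, since each identity is a one-line consequence of the rank-one inverse formula. The only point needing care is the index convention. The complement basis must have $q_{m+1}$ as its first column, as in Lemma~\ref{lem:rankone_coupling}, so that $e_1$ in \eqref{eq:Sperp} and in the statement are the same vector. With that convention, $Q_{m,\perp}T^{-1}e_1$ is basis-independent: it equals $A_{m,\perp}^{+}q_{m+1}$ by \eqref{eq:Bpinv}. This is what makes \eqref{eq:g_SM} computable matrix-free.
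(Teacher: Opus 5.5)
Your proposal is correct and matches the paper's proof: Sherman--Morrison applied to the downdate $S_{\perp}=T_{m,\perp}-\gamma e_1e_1^{\top}$, followed by taking the trace, the $(1,1)$ entry, and the action on $e_1$ mapped back by $Q_{m,\perp}$. One cosmetic slip: when you multiply $T-\gamma e_1e_1^{\top}$ by the candidate inverse, the cross terms are multiples of $e_1e_1^{\top}T^{-1}$, not $T^{-1}e_1e_1^{\top}T^{-1}$, but the vanishing coefficient you computed is right.
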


\begin{proof}
Since $S_{\perp}=T_{m,\perp}-\gamma\,e_1e_1^{\top}$ is a rank-one downdate, the Sherman-Morrison identity gives
\[
S_{\perp}^{-1}
=T_{m,\perp}^{-1}
+\frac{\gamma\,T_{m,\perp}^{-1}e_1e_1^{\top}T_{m,\perp}^{-1}}
{1-\gamma\,e_1^{\top}T_{m,\perp}^{-1}e_1}.
\]
Taking the trace and using
$\tr(T_{m,\perp}^{-1}e_1e_1^{\top}T_{m,\perp}^{-1})
=e_1^{\top}T_{m,\perp}^{-2}e_1=(T_{m,\perp}^{-2})_{11}$ yields
\eqref{eq:trace_SM}. Pairing the identity with $e_1$ on both sides gives
$(S_{\perp}^{-1})_{11}
=(T_{m,\perp}^{-1})_{11}
+\frac{\gamma\,(T_{m,\perp}^{-1})_{11}^{2}}{1-\gamma(T_{m,\perp}^{-1})_{11}}$, which simplifies to \eqref{eq:s11_SM}. Applying the identity to $e_1$ and mapping to full space by $Q_{m,\perp}$ gives
$Q_{m,\perp}S_{\perp}^{-1}e_1
=Q_{m,\perp}T_{m,\perp}^{-1}e_1\bigl(1+\frac{\gamma(T_{m,\perp}^{-1})_{11}}{
1-\gamma(T_{m,\perp}^{-1})_{11}}\bigr)$, which is \eqref{eq:g_SM}.
\end{proof}

\paragraph{Assembling the estimator.}
Combining P-SLQ with \eqref{eq:boundary_forms}--\eqref{eq:g_SM}:
\begin{equation}
\widehat{\tr}\bigl(S_{\perp}^{-1}\bigr)
=\widehat{t}_{\perp}+\frac{\gamma\,a_2}{1-\gamma\,a_1},
\qquad
s_{11}=\frac{a_1}{1-\gamma\,a_1},
\qquad
\sigma_u=\frac{p_3}{p_2},
\qquad
\|q_{\perp}\|_2^{2}=p_2,
\label{eq:assembled}
\end{equation}
where $\widehat{t}_{\perp}$ is the P-SLQ estimate of $\tr(T_{m,\perp}^{-1})$. The complement magnitude is then $\omega(m)=\frac{\widehat{\tr}(S_{\perp}^{-1})-\sigma_u}{d-m-1}$
\eqref{eq:omega_rest}. The coupling direction is obtained from a single
conjugate-gradient solve of $(P_m^{\perp}AP_m^{\perp})x=\beta_mq_{m+1}$, giving $Q_{m,\perp}T_{m,\perp}^{-1}e_1=\frac{x}{\beta_m}$, followed by the scalar rescaling \eqref{eq:g_SM},
\begin{equation}
q_{\perp}=\frac{\frac{x}{\beta_m}}{1-\gamma\,a_1},
\qquad u=\frac{q_{\perp}}{\|q_{\perp}\|_2} .
\label{eq:g_from_cg}
\end{equation}
The additional cost over P-SLQ is one boundary probe ($l$ products) and one CG solve ($l_{\mathrm{cg}}$ products); the P-SLQ consistency guarantees of Theorem~\ref{thm:pslq_consistency} carry over to $\widehat{\tr}(S_{\perp}^{-1})$ since the correction is a deterministic, exponentially accurate function of the boundary forms \eqref{eq:boundary_forms}.


The following Algorithm~\ref{alg:pslq} collects the construction of Section~\ref{sec:cu} and the estimators of Section~\ref{sec:pslq} into a single procedure.

\begin{algorithm}[H]
\caption{P-SLQ with exact boundary coupling}
\label{alg:pslq}
\begin{algorithmic}[1]
\Require matrix $A$ (implicit, via matrix-vector products), vector $v$,
         Krylov dimension $m$, quadrature depth $l$, probes $N$,
         CG iterations $l_{\mathrm{cg}}$, sample count $n_s$, centre
         $\theta_{\star}$
\vspace{0.4em}
\State \textbf{Phase 1: resolved basis and boundary data}
\State $q_1 \gets \frac{v}{\norm{v}}$; run $m$-step Lanczos on $A$ from $q_1$
       $\;\to\;$ $\Qm$, $\Tm$, residual norm $\beta_m$, next vector $q_{m+1}$
\State $\gamma \gets \beta_m^{2}\,(\Tm^{-1})_{mm}$
       \Comment{coupling scalar, eq.~\eqref{eq:Sperp}}
\State $q_{\parallel} \gets \beta_m\,\Qm \Tm^{-1} e_m$;\quad $\hat q \gets \frac{q_{\parallel}}{\norm{q_{\parallel}}}$
       \Comment{resolved coupling vector, eq.~\eqref{eq:coupling_pair}}
\vspace{0.4em}
\State \textbf{Phase 2a: bulk trace by P-SLQ}
\For{$i = 1, \dots, N$}
  \State $\xi_i \sim \N(0, I_d)$;\quad
         $u_i \gets \xi_i - \Qm(\Qm^\top \xi_i)$
  \State run $l$-step reprojected Lanczos on $A$ from $ \frac{u_i}{\norm{u_i}}$
         $\;\to\;\Theta_l^{(i)}$
  \State $\nu_i \gets e_1^\top \bigl(\Theta_l^{(i)}\bigr)^{-1} e_1$
\EndFor
\State $\widehat{t}_{\perp} \gets \frac{1}{N}\sum_{i=1}^N
       \norm{u_i}^2\, \nu_i$
       \Comment{$\approx\tr(T_{m,\perp}^{-1})$,
       Theorem~\ref{thm:pslq_consistency}}
\vspace{0.4em}
\State \textbf{Phase 2b: boundary probe}
\State run $l$-step reprojected Lanczos on $A$ from $q_{m+1}$
       $\;\to\;\Theta$
\State $\Theta_{\gamma} \gets \Theta - \gamma\,e_1 e_1^\top$
       \Comment{$S_{\perp}$ in the probe's coordinates}
\State $a_1 \gets e_1^\top \Theta^{-1} e_1$;\quad
       $a_2 \gets e_1^\top \Theta^{-2} e_1$
       \Comment{eq.~\eqref{eq:boundary_forms}}
\State $p_2 \gets e_1^\top \Theta_{\gamma}^{-2} e_1$;\quad
       $p_3 \gets e_1^\top \Theta_{\gamma}^{-3} e_1$;\quad
       $p_4 \gets e_1^\top \Theta_{\gamma}^{-4} e_1$
    
\vspace{0.4em}
\State \textbf{Phase 2c: coupling-corrected magnitudes}
\State $\widehat{\tr}\bigl(S_{\perp}^{-1}\bigr) \gets
       \widehat{t}_{\perp} + \dfrac{\gamma\, a_2}{1-\gamma\, a_1}$
       \Comment{Prop.~\ref{prop:sherman_morrison}}
\State $s_{11} \gets \dfrac{a_1}{1-\gamma\,a_1}$;\quad
       $\sigma_u \gets \frac{p_3}{p_2}$;\quad
       $\norm{q_{\perp}}^2 \gets p_2$
       \Comment{eq.~\eqref{eq:assembled}}
\State $\omega(m) \gets
       \frac{\widehat{\tr}(S_{\perp}^{-1})-\sigma_u}{d-m-1}$
       \Comment{eq.~\eqref{eq:omega_rest}}
\State $\tau^{2} \gets \dfrac{p_4}{p_2}-\Bigl(\dfrac{p_3}{p_2}\Bigr)^{2}$
       \Comment{cross-term $\|c\|_2^{2}$,
       Prop.~\ref{prop:total_quality}(iii); a posteriori error diagnostic}
       
\State $C_2 \gets \begin{pmatrix}
       s_{11}\norm{q_{\parallel}}^2 & -\norm{q_{\parallel}}\norm{q_{\perp}}\\[1pt]
       -\norm{q_{\parallel}}\norm{q_{\perp}} & \sigma_u\end{pmatrix}$;\quad
       compute $C_2^{\frac{1}{2}}$
       \Comment{eq.~\eqref{eq:C2}; PSD by Lemma~\ref{lem:C2_psd}}
\vspace{0.4em}
\State \textbf{Phase 2d: coupling direction}
\State solve $\bigl(P_m^{\perp}AP_m^{\perp}\bigr)\,x = \beta_m\, q_{m+1}$
       by $l_{\mathrm{cg}}$ CG iterations, operator
       $y \mapsto P_m^{\perp}\!\bigl(A\,(P_m^{\perp}y)\bigr)$
\State $q_{\perp} \gets \dfrac{\frac{x}{\beta_m}}{1-\gamma\,a_1}$;\quad
       $u \gets \frac{q_{\perp}}{\norm{q_{\perp}}}$
       \Comment{eq.~\eqref{eq:g_from_cg}}
\vspace{0.4em}
\State \textbf{Phase 3: sampling (Proposition~\ref{prop:sampler})}
\For{$s = 1, \dots, n_s$}
  \State $z_1 \sim \N(0,I_m)$;\quad $z_c \sim \N(0,I_2)$;\quad
         $z_2 \sim \N(0,I_d)$
  \State $\theta^{(s)} \gets \theta_{\star}
         + \Qm \Tm^{-\frac{1}{2}} z_1
         + [\,\hat q\ \ u\,]\,C_2^{\frac{1}{2}} z_c$
         $+\ \sqrt{\omega(m)}\,\bigl(z_2 - \Qm(\Qm^{\top}z_2) - u\,(u^{\top}z_2)\bigr)$
\EndFor
\Statex \textit{Total matrix-vector products:}
        $m + Nl + l + l_{\mathrm{cg}}$.
\end{algorithmic}
\end{algorithm}

\section{Experiments}
\label{sec:experiments}

We validate the method in two steps: we have first the section~\ref{sec:exp_pslq} tests the P-SLQ estimator in three terms: its consistency, the role of re-projection, and its accuracy against the unbiased solver ULISSE and some state-of-the-art technics for trace estimation at matched matrix-vector budgets; and then the section~\ref{sec:exp_laplace} applies the full pipeline to the Laplace approximation in Bayesian deep learning.

\subsection{P-SLQ Estimator: Consistency, Reprojection, and Trace Accuracy}
\label{sec:exp_pslq}

\paragraph{Consistency.}
We first verify the two guarantees of Theorem~\ref{thm:pslq_consistency}, for that we use three spectra cases to represent different setting of complement conditioning that governs the isotropic approximation (Theorem~\ref{thm:iso}):
\begin{itemize}
\item \emph{Bulk + outliers}: Where a few large outliers are present over a large range of values. This most often represents the Hessian matrix of deep networks, whose spectrum consists of a small number of outliers, on the order of the number of classes, over a large set. \citep{sagun2017empirical, papyan2020traces, ghorbani2019investigation}. Once Lanczos resolves the outliers, the remaining complement is  well-conditioned ($\kappa_S^{\perp}(m)$ small), the case  where   isotropic approximation  is accurate.

\item \emph{Power-law} ($\lambda_i\propto i^{-p}$): With   a smoothly spread spectrum. This is the case of kernel and integral operators and of discretised elliptic PDEs, and it is the setting in which the complement keeps substantial spectral mass at every $m$.

\item \emph{Exponential} ($\lambda_i\propto e^{-ci}$): With a rapid geometric decay, so the inverse concentrates in a few directions and the complement remains ill-conditioned even at large $m$ ($\kappa_S^{\perp}(m)$ large). This is the complicated  case for isotropic approximation.
\end{itemize}
These cases correspond to an increasing allocation of the complementary inverse spectrum, and therefore to an increasing allocation  error, allowing us to probe the dependence $\kappa_S^{\perp}(m)$ predicted by our theory.

Fixing $A$ and the Krylov dimension $m$, we measure the error of $\widehat{t}_{m,\perp}$ against the true trace $\mathrm{tr}(T_{m,\perp}^{-1})$ as a function of the quadrature depth $l$ and the number of probes $N$ (Figure~\ref{fig:exp_consistency}). The bias decays geometrically in $l$, at the rate $\rho_{\perp}^{-2l}$ of Theorem~\ref{thm:gauss_lanczos}, and the relative variance decays as $N^{-1}$, matching the Monte Carlo rate of \eqref{eq:variance_bound}. Both work  for  the three spectra, with the fastest bias decay on the well-conditioned bulk+outliers regime and the slowest on the exponential, as expected from the complement conditioning.

\begin{figure}[!htbp]
\centering
\includegraphics{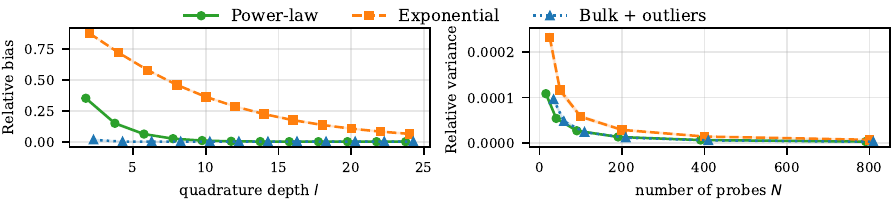}
\caption{P-SLQ consistency. \emph{Left}: relative bias
$\frac{|\mathbb{E}[\hat t_{m,\perp}]-t_{m,\perp}|}{t_{m,\perp}}$ versus quadrature depth $l$ (geometric decay). \emph{Right}: relative variance versus number of probes $N$ ($\propto N^{-1}$). Curves are means over independent matrices with $\pm$ one standard deviation bands.}
\label{fig:exp_consistency}
\end{figure}
\FloatBarrier
\paragraph{The role of reprojection.} P-SLQ reprojects every Lanczos step onto $\mathrm{range}(P_{m}^{\perp})$. This is what the consistency proof requires the iteration to stay in the complement, and it is not implied by reorthogonalisation, which only enforces orthogonality within the quadrature basis. Figure~\ref{fig:exp_reproj} isolates its effect: both variants use identical full reorthogonalisation and differ only in the per-step reprojection. Without it the iterates can turn back to  the resolved directions, which the operator then re-amplifies, and the bias grows relative to P-SLQ as the quadrature depth increases.
We report this on the power-law spectrum, and the reason is worth giving. An iterate can only drift back if the complement holds large and spread eigenvalues, since it is the operator on the complement that re-amplifies a leaked component. On the exponential and bulk+outliers spectra the complement is either concentrated or well separated from the resolved block, the iterates barely drift, and the two variants give the same result up to noise. The  re-projection  matters in the regime where the complement matters, and costs nothing in the others.

\begin{figure}[!htbp]
\centering
\includegraphics[width=\linewidth,height=0.3\textheight,keepaspectratio]{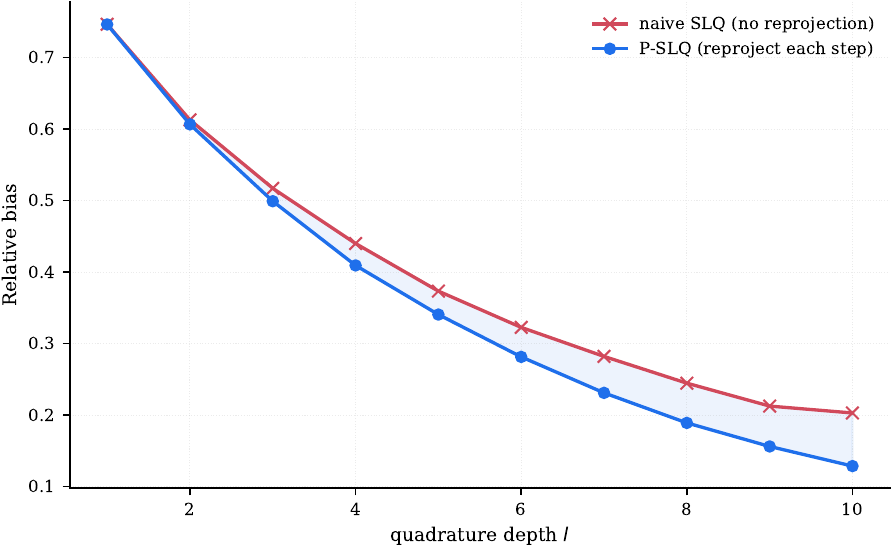}
\caption{Effect of reprojection on the complement-trace estimate (power-law
spectrum, $d=2000$, $m=20$). Both variants use full reorthogonalisation; they
differ only in the per-step reprojection onto $\mathrm{range}(P_{m}^{\perp})$.
Without reprojection the iterates drift toward the resolved directions and the relative bias grows with the quadrature depth $l$.}
\label{fig:exp_reproj}
\end{figure}
\FloatBarrier
\paragraph{Trace estimation.}
We compare our method  P-SLQ against standard Hutchinson+CG, classic SLQ, and the state-of-the-art stochastic trace estimators Hutch++,  XTrace \citep{persson2022xtrace} and the kernel-5d from  spatial Gaussian process covariance matrix generated from $n=1000$ points uniformly sampled in $\mathbb{R}^5$, using an exponential kernel $A_{ij} = \exp(-\|x_i - x_j\|_2 / \ell) + \sigma_{\mathrm{GP}}^2 \delta_{ij}$ with lengthscale $\ell = 0.3\sqrt{5}$ and noise variance $\sigma_{\mathrm{GP}}^2 = 0.01$ ( $\delta_{ij}$ is the Kronecker delta), alongside the unbiased Russian-roulette solver ULISSE \citep{filippone2015enabling} on the estimation of $\mathrm{tr}(A^{-1})$ at the same matrix-vector budgets (Figure~\ref{fig:exp_ulisse}).  What we can see is  P-SLQ achieves the lowest relative RMSE across all spectral regimes by trading a small, exponentially decaying bias for an $O(N^{-1})$ variance independent of $m$, reusing a single Krylov basis across all probes and completely bypassing multiple costly linear solves.

\begin{figure}[!htbp]
\centering
\includegraphics{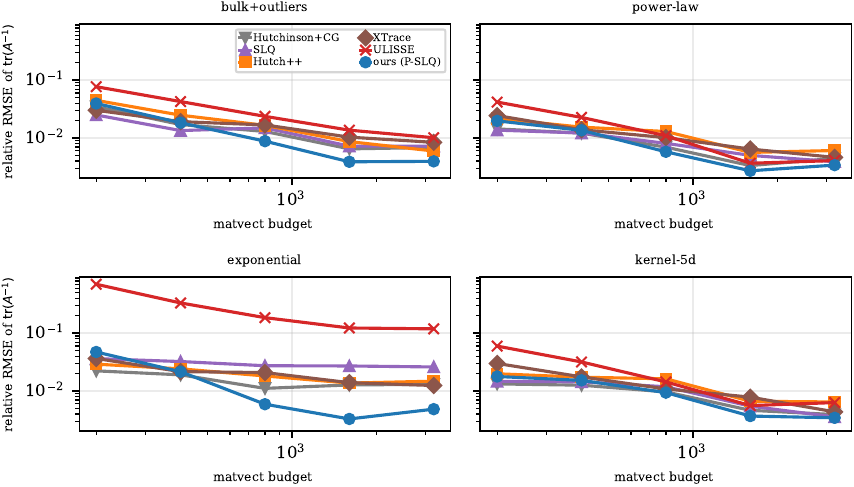}
\caption{Relative RMSE of $\mathrm{tr}(A^{-1})$ versus matrix-vector budget for P-SLQ,  ULISSE \citep{filippone2015enabling} and some state-of-the-art technics, on the three spectra ($d=1000$, $\kappa \approx 10^{5}$) using synthetic matrices, plus the kernel-5d}
\label{fig:exp_ulisse}
\end{figure}
\FloatBarrier
\subsection{Application for Laplace Approximation in Bayesian deep learning}
\label{sec:exp_laplace}

We apply the corrected covariance to the Laplace approximation, where $A$ is the damped generalized Gauss-Newton matrix and $ A^{-1}$ is the posterior covariance. The main goal is to bring a matrix-free posterior distribution closer to the exact Laplace posterior distribution using the krylov complement restoration, compare to existing  structural approximations. We measure this directly by calculating the exact predictive covariance and comparing it to the corrected covariance.

\subsubsection{Setup}
\label{sec:setup}
We use two image classification benchmarks: a convolutional network on FashionMNIST ($p = 11,878$), where the full Laplace posterior can be formed and inverted, and a  style of ResNet-9 network without BatchNorm on CIFAR-10 ($p = 672,890$), where it cannot. BatchNorm is removed so that KFAC and the marginal-likelihood fit are available in \texttt{laplace-torch}. The CIFAR-10 network is almost the same in terms of size to the ResNet-9 of \citet{faller2025low} but is not their architecture, so our CIFAR-10 results are not a reproduction of their Figure 2. All results are reported as mean +/- standard error over 5 seeds, across seeds only the stochastic components vary (Lanczos start vector, P-SLQ probes, predictive draws).

The GGN is assembled from a subset of $200$ training points and rescaled by $50000/200$ to match the full training set. The prior precision $\lambda$ is fitted by marginal likelihood on the unscaled $200$-point operator and multiplied by the same factor, it is  not the marginal-likelihood optimum of the rescaled operator, and we use it because refitting against the rescaled GGN would cost a full-data marginal likelihood. We obtain $\lambda$ approximate $350$ on FashionMNIST and $\lambda$ approximate $2900$ on CIFAR-10. The KFAC surrogate used to construct the low-rank-KFAC subspace is fitted on the same $200$ points without rescaling, so its curvature is small relative to the prior compared with the operator that all methods are then evaluated against. This works against low-rank KFAC, so our comparison is conservative rather than favourable

\subsubsection{Metrics}
\label{sec:metrics}

\paragraph{Fidelity.}
We report the per-point Gaussian KL divergence of \citet[eq.~19]{faller2025low}. Write $\Sigma_X^{(i)}$ and $\widehat{\Sigma}_X^{(i)}$ for the $C \times C$ diagonal blocks of the exact and the approximate predictive covariance at test point $i$, and average over the $n$ test points:
\begin{equation}
  \overline{\mathrm{KL}}
  \;=\; \frac{1}{n}\sum_{i=1}^{n}
  \mathrm{KL}\!\Big(
    \mathcal{N}\big(0,\Sigma_X^{(i)}\big)
    \,\Big\|\,
    \mathcal{N}\big(0,\widehat{\Sigma}_X^{(i)}\big)
  \Big),
  \label{eq:kl-perpoint}
\end{equation}
where, for symmetric positive definite $\Sigma_0,\Sigma_1 \in \mathbb{R}^{C\times C}$,
\begin{equation}
  \mathrm{KL}\big(\mathcal{N}(0,\Sigma_0) \,\|\, \mathcal{N}(0,\Sigma_1)\big)
  = \tfrac{1}{2}\Big(
      \operatorname{tr}\big(\Sigma_1^{-1}\Sigma_0\big) - C
      + \log\det\Sigma_1 - \log\det\Sigma_0
    \Big).
  \label{eq:kl-gauss}
\end{equation}
All  is computed matrix-free. The KL requires $\Sigma_X = J_X A^{-1} J_X^{\top}$, an $nC \times nC$ matrix, and not $A^{-1}$ itself,
so solving $A X = J_X^{\top}$ by conjugate gradients and forming $J_X X$ gives it exactly. On CIFAR-10 we get the worst relative residual below $10^{-6}$. This is the direction used by \citet{faller2025low}, and it penalises an approximation for assigning too little variance. Since every rank-s  projection does not assign a variance to its subspace, we also use the log-trace, which is not directional. Blocks with a singular approximate covariance give an infinite KL, we exclude them and report the excluded fraction.

\paragraph{Predictive performance.}
We report the test negative log-likelihood, the expected calibration error with $15$ bins, and the Brier score, all under the probit predictive.

\subsubsection{Subspace baselines}
\label{sec:subspace_baselines}

To evaluate our corrected covariance, we compare it  against the data‑driven subspace projections of \citet{faller2025low}.  Their framework constructs a rank‑$s$ projection of the predictive covariance, $\Sigma_{P,X} = J_X P (P^\top A P)^{-1} P^\top J_X^\top$, where the projection matrix $P\in\mathbb{R}^{p\times s}$ is built from a low‑rank approximation of the GGN inverse.  Specifically, let $\Psi_{\mathrm{approx}}$ be a scalable surrogate for $A^{-1}$ (e.g.~the diagonal of $A^{-1}$ or the KFAC approximation).  Given a subset of training inputs $X'$, the top‑$s$ eigenvectors $U_s$ of the matrix $J_{X'} \Psi_{\mathrm{approx}} J_{X'}^\top$ are computed, and the projection is set to $P = \Psi_{\mathrm{approx}} J_{X'}^\top U_s$.  The resulting predictive
covariance has rank at most $s$, assigning exactly zero variance to the
remaining $p-s$ directions. While the per‑point KL divergence \citep[eq.~19]{faller2025low} remains finite, the projection methods  undercover the predictive uncertainty, as reflected by their KL values
(figure~\ref{fig:fashionmnist_cifar10_scaling} and Table~\ref{tab:fullrank_kl_logtr}).

We evaluate four variants of this construction: \texttt{lowrank‑Diagonal} (using the diagonal of $A^{-1}$), \texttt{lowrank‑KFAC} (using the KFAC approximation), and the theoretically optimal projection \texttt{lowrankopt‑GGN} (available only on FashionMNIST, where the full GGN can be formed and decomposed).  To isolate the effect of our isotropic complement correction, we also include a low-rank-plus-shift baseline, which augments a subspace with a $1/\lambda$ variance on its complement. On FashionMNIST it is built on the Theorem~1 optimum, and we write it \texttt{lowrankopt + $1/\lambda$ shift}. On CIFAR-10 that optimum is not available, so it is built on the best feasible subspace, \texttt{lowrank-KFAC}, and we write it \texttt{lowrank-KFAC + $1/\lambda$ shift}. The two rows are therefore not the same construction, and we label them separately. We also include the subset-based selection heuristic \texttt{subset-Magnitude}, which selects the $s$ parameters of largest absolute MAP value \citep{daxberger2021subnetwork}.

The Theorem~1 optimum $P^{\star} = \Psi J_{X}^{\top} U_s$ is reported on FashionMNIST, where the full GGN can be formed and inverted. On CIFAR-10 it is omitted: with $\Psi$ available only through matrix-vector products, it requires one conjugate-gradient solve per row of $J_{X}$, which was beyond our compute budget. It is a reference ceiling rather than a competing method, but the omission means our CIFAR-10 comparison is against the feasible methods of
\citet{faller2025low}, not against their optimum, and we do not claim
otherwise.

\subsubsection{Predictive performance}

The tables~\ref{tab:fashionmnist_results} and~\ref{tab:cifar10_results} show the  the test negative log‑likelihood (NLL), the expected calibration error (ECE) and the brier score  for a range of subspace dimensions $s$.  Full‑rank Laplace baselines (diagonal, KFAC, ELLA, and the exact full Laplace) are shown at $s = p$.  The exact full Laplace is used as the reference, its NLL beats the MAP on FashionMNIST, confirming consistency of the GGN scale and marginal‑likelihood fit.

\begin{table}[!htbp]
\centering
\caption{FashionMNIST CNN ($p=11{,}878$): predictive performance vs.\ subspace dimension $s$.  Mean $\pm$ s.e.m.\ over 5 seeds.}
\label{tab:fashionmnist_results}
\resizebox{\textwidth}{!}{%
\begin{tabular}{lcccc}
\toprule
\textbf{Method} & $s$ & \textbf{NLL} $\downarrow$ & \textbf{ECE} $\downarrow$ & \textbf{Brier} $\downarrow$ \\
\midrule
\multicolumn{5}{c}{\textit{Subspace methods}} \\
\midrule
subset‑Magnitude       & 120 & $0.2609 \pm 0.0046$ & $0.0273 \pm 0.0022$ & $0.1331 \pm 0.0015$ \\
lowrank‑Diagonal       & 120 & $0.2600 \pm 0.0046$ & $0.0274 \pm 0.0025$ & $0.1330 \pm 0.0014$ \\
lowrank‑KFAC           & 120 & $0.2562 \pm 0.0043$ & $0.0226 \pm 0.0025$ & $0.1323 \pm 0.0014$ \\
lowrankopt‑GGN         & 120 & $0.2541 \pm 0.0041$ & $0.0189 \pm 0.0024$ & $0.1319 \pm 0.0014$ \\
lowrankopt + $1/\lambda$ shift & 120 & $0.3519 \pm 0.0101$ & $0.1155 \pm 0.0082$ & $0.1655 \pm 0.0048$ \\
\textbf{$\Sigma_m$ (ours)} & \textbf{120} & $\mathbf{0.2543 \pm 0.0023}$ & $\mathbf{0.0224 \pm 0.0021}$ & $\mathbf{0.1323 \pm 0.0011}$ \\
\midrule
subset‑Magnitude       & 230 & $0.2606 \pm 0.0046$ & $0.0277 \pm 0.0025$ & $0.1331 \pm 0.0015$ \\
lowrank‑Diagonal       & 230 & $0.2591 \pm 0.0046$ & $0.0265 \pm 0.0027$ & $0.1329 \pm 0.0014$ \\
lowrank‑KFAC           & 230 & $0.2552 \pm 0.0042$ & $0.0204 \pm 0.0026$ & $0.1321 \pm 0.0014$ \\
lowrankopt‑GGN         & 230 & $0.2535 \pm 0.0040$ & $0.0192 \pm 0.0024$ & $0.1319 \pm 0.0014$ \\
lowrankopt + $1/\lambda$ shift & 230 & $0.3516 \pm 0.0101$ & $0.1151 \pm 0.0082$ & $0.1654 \pm 0.0048$ \\
\textbf{$\Sigma_m$ (ours)} & \textbf{230} & $\mathbf{0.2520 \pm 0.0027}$ & $\mathbf{0.0187 \pm 0.0029}$ & $\mathbf{0.1317 \pm 0.0012}$ \\
\midrule
subset‑Magnitude       & 450 & $0.2599 \pm 0.0045$ & $0.0269 \pm 0.0025$ & $0.1330 \pm 0.0014$ \\
lowrank‑Diagonal       & 450 & $0.2578 \pm 0.0044$ & $0.0253 \pm 0.0027$ & $0.1327 \pm 0.0014$ \\
lowrank‑KFAC           & 450 & $0.2541 \pm 0.0041$ & $0.0189 \pm 0.0023$ & $0.1319 \pm 0.0014$ \\
lowrankopt‑GGN         & 450 & $0.2528 \pm 0.0040$ & $0.0180 \pm 0.0025$ & $0.1317 \pm 0.0014$ \\
lowrankopt + $1/\lambda$ shift & 450 & $0.3513 \pm 0.0101$ & $0.1147 \pm 0.0081$ & $0.1653 \pm 0.0048$ \\
\textbf{$\Sigma_m$ (ours)} & \textbf{450} & $\mathbf{0.2507 \pm 0.0030}$ & $\mathbf{0.0165 \pm 0.0016}$ & $\mathbf{0.1314 \pm 0.0013}$ \\
\midrule
subset‑Magnitude       & 670 & $0.2593 \pm 0.0045$ & $0.0264 \pm 0.0027$ & $0.1329 \pm 0.0014$ \\
lowrank‑Diagonal       & 670 & $0.2569 \pm 0.0044$ & $0.0247 \pm 0.0024$ & $0.1326 \pm 0.0014$ \\
lowrank‑KFAC           & 670 & $0.2534 \pm 0.0040$ & $0.0187 \pm 0.0024$ & $0.1318 \pm 0.0014$ \\
lowrankopt‑GGN         & 670 & $0.2523 \pm 0.0039$ & $0.0170 \pm 0.0025$ & $0.1316 \pm 0.0014$ \\
lowrankopt + $1/\lambda$ shift & 670 & $0.3511 \pm 0.0101$ & $0.1145 \pm 0.0081$ & $0.1653 \pm 0.0048$ \\
\textbf{$\Sigma_m$ (ours)} & \textbf{670} & $\mathbf{0.2502 \pm 0.0031}$ & $\mathbf{0.0151 \pm 0.0010}$ & $\mathbf{0.1313 \pm 0.0013}$ \\
\midrule
subset‑Magnitude       & 1000 & $0.2584 \pm 0.0045$ & $0.0254 \pm 0.0027$ & $0.1328 \pm 0.0014$ \\
lowrank‑Diagonal       & 1000 & $0.2559 \pm 0.0043$ & $0.0237 \pm 0.0026$ & $0.1324 \pm 0.0014$ \\
lowrank‑KFAC           & 1000 & $0.2528 \pm 0.0039$ & $0.0190 \pm 0.0024$ & $0.1317 \pm 0.0014$ \\
lowrankopt‑GGN         & 1000 & $0.2518 \pm 0.0039$ & $0.0163 \pm 0.0027$ & $0.1315 \pm 0.0014$ \\
lowrankopt + $1/\lambda$ shift & 1000 & $0.3509 \pm 0.0101$ & $0.1142 \pm 0.0081$ & $0.1652 \pm 0.0048$ \\
\textbf{$\Sigma_m$ (ours)} & \textbf{1000} & $\mathbf{0.2499 \pm 0.0032}$ & $\mathbf{0.0168 \pm 0.0005}$ & $\mathbf{0.1313 \pm 0.0013}$ \\
\midrule
\multicolumn{5}{c}{\textit{Full‑rank baselines ($s = p = 11{,}878$) and exact reference}} \\
\midrule
Diagonal LA            & $p$ & $0.2536 \pm 0.0032$ & $0.0152 \pm 0.0008$ & $0.1331 \pm 0.0010$ \\
KFAC LA                & $p$ & $0.2604 \pm 0.0020$ & $0.0260 \pm 0.0026$ & $0.1344 \pm 0.0007$ \\
ELLA (Nyström, $K=500$) & $p$ & $0.2543 \pm 0.0042$ & $0.0191 \pm 0.0022$ & $0.1320 \pm 0.0015$ \\
SWAG & $p$ & $0.2655 \pm 0.0054 $ & $0.0184 \pm 0.0024$ & $0.1391 \pm 0.0033$ \\
Exact Full LA          & --  & $0.2498 \pm 0.0033$ & $0.0154 \pm 0.0012$ & $0.1312 \pm 0.0013$ \\
MAP                    & --  & $0.2613 \pm 0.0047$ & $0.0275 \pm 0.0022$ & $0.1332 \pm 0.0015$ \\
\bottomrule
\end{tabular}%
}
\end{table}
\FloatBarrier
\begin{table}[H]
\centering
\caption{CIFAR‑10 ResNet‑9 ($p\approx672{,}890$): predictive performance vs.\ subspace dimension $s$.  Mean $\pm$ s.e.m.\ over 5 seeds.}
\label{tab:cifar10_results}
\resizebox{\textwidth}{!}{%
\begin{tabular}{lcccc}
\toprule
\textbf{Method} & $s$ & \textbf{NLL} $\downarrow$ & \textbf{ECE} $\downarrow$ & \textbf{Brier} $\downarrow$ \\
\midrule
\multicolumn{5}{c}{\textit{Subspace methods}} \\
\midrule
subset‑Magnitude       & 20  & $0.3758 \pm 0.0185$ & $0.0446 \pm 0.0013$ & $0.1796 \pm 0.0065$ \\
lowrank‑Diagonal       & 20  & $0.3755 \pm 0.0183$ & $0.0450 \pm 0.0015$ & $0.1796 \pm 0.0065$ \\
lowrank‑KFAC           & 20  & $0.3755 \pm 0.0184$ & $0.0450 \pm 0.0015$ & $0.1796 \pm 0.0065$ \\
lowrank-KFAC + $1/\lambda$ shift & 20 & $0.3614 \pm 0.0152$ & $0.0371 \pm 0.0043$ & $0.1811 \pm 0.0076$ \\
\textbf{$\Sigma_m$ (ours)} & \textbf{20}  & $\mathbf{0.3639 \pm 0.0148}$ & $\mathbf{0.0363 \pm 0.0027}$ & $\mathbf{0.1818 \pm 0.0073}$ \\
\midrule
subset‑Magnitude       & 50  & $0.3758 \pm 0.0185$ & $0.0452 \pm 0.0015$ & $0.1796 \pm 0.0065$ \\
lowrank‑Diagonal       & 50  & $0.3747 \pm 0.0182$ & $0.0452 \pm 0.0014$ & $0.1795 \pm 0.0065$ \\
lowrank‑KFAC           & 50  & $0.3741 \pm 0.0181$ & $0.0460 \pm 0.0023$ & $0.1794 \pm 0.0064$ \\
lowrank-KFAC + $1/\lambda$ shift & 50 & $0.3603 \pm 0.0152$ & $0.0343 \pm 0.0033$ & $0.1807 \pm 0.0076$ \\
\textbf{$\Sigma_m$ (ours)} & \textbf{50}  & $\mathbf{0.3622 \pm 0.0148}$ & $\mathbf{0.0349 \pm 0.0038}$ & $\mathbf{0.1812 \pm 0.0074}$ \\
\midrule
subset‑Magnitude       & 100 & $0.3756 \pm 0.0184$ & $0.0457 \pm 0.0014$ & $0.1796 \pm 0.0065$ \\
lowrank‑Diagonal       & 100 & $0.3734 \pm 0.0180$ & $0.0449 \pm 0.0017$ & $0.1794 \pm 0.0065$ \\
lowrank‑KFAC           & 100 & $0.3732 \pm 0.0180$ & $0.0445 \pm 0.0021$ & $0.1794 \pm 0.0065$ \\
lowrank-KFAC + $1/\lambda$ shift & 100 & $0.3599 \pm 0.0152$ & $0.0329 \pm 0.0031$ & $0.1805 \pm 0.0076$ \\
\textbf{$\Sigma_m$ (ours)} & \textbf{100} & $\mathbf{0.3601 \pm 0.0149}$ & $\mathbf{0.0337 \pm 0.0022}$ & $\mathbf{0.1803 \pm 0.0074}$ \\
\midrule
subset‑Magnitude       & 200 & $0.3754 \pm 0.0184$ & $0.0453 \pm 0.0010$ & $0.1796 \pm 0.0065$ \\
lowrank‑Diagonal       & 200 & $0.3720 \pm 0.0178$ & $0.0453 \pm 0.0038$ & $0.1793 \pm 0.0065$ \\
lowrank‑KFAC           & 200 & $0.3718 \pm 0.0178$ & $0.0454 \pm 0.0037$ & $0.1792 \pm 0.0065$ \\
lowrank-KFAC + $1/\lambda$ shift & 200 & $0.3595 \pm 0.0153$ & $0.0333 \pm 0.0031$ & $0.1803 \pm 0.0076$ \\
\textbf{$\Sigma_m$ (ours)} & \textbf{200} & $\mathbf{0.3584 \pm 0.0155}$ & $\mathbf{0.0328 \pm 0.0031}$ & $\mathbf{0.1797 \pm 0.0077}$ \\
\midrule
subset‑Magnitude       & 500 & $0.3750 \pm 0.0183$ & $0.0455 \pm 0.0017$ & $0.1795 \pm 0.0065$ \\
lowrank‑Diagonal       & 500 & $0.3720 \pm 0.0178$ & $0.0453 \pm 0.0038$ & $0.1793 \pm 0.0065$ \\
lowrank‑KFAC           & 500 & $0.3718 \pm 0.0178$ & $0.0454 \pm 0.0037$ & $0.1792 \pm 0.0065$ \\
lowrank-KFAC + $1/\lambda$ shift & 500 & $0.3595 \pm 0.0153$ & $0.0333 \pm 0.0031$ & $0.1803 \pm 0.0076$ \\
\textbf{$\Sigma_m$ (ours)} & \textbf{500} & $\mathbf{0.3573 \pm 0.0156}$ & $\mathbf{0.0317 \pm 0.0048}$ & $\mathbf{0.1792 \pm 0.0077}$ \\
\midrule
\multicolumn{5}{c}{\textit{Full‑rank baselines ($s = p \approx 672{,}890$) }} \\
\midrule
Diagonal LA            & $p$ & $0.3684 \pm 0.0143$ & $0.0428 \pm 0.0022$ & $0.1836 \pm 0.0072$ \\
KFAC LA                & $p$ & $0.3715 \pm 0.0141$ & $0.0471 \pm 0.0030$ & $0.1846 \pm 0.0071$ \\
ELLA (Nyström)         & $p$ & $0.3600 \pm 0.0153$ & $0.0347 \pm 0.0030$ & $0.1806 \pm 0.0076$ \\
MAP                    & --   & $0.3762 \pm 0.0185$ & $0.0434 \pm 0.0015$ & $0.1796 \pm 0.0065$ \\
\bottomrule
\end{tabular}%
}
\end{table}

The predictive metrics separate the methods far less than the KL does. On FashionMNIST at $s=1000$, $\Sigma_m$ reaches the exact full Laplace (NLL $0.2499$ against $0.2498$, Brier $0.1313$ against $0.1312$), which is the most that can be asked of it, and no other approximation gets that close on NLL. The absolute differences are nevertheless small: every method sits within a few thousandths of the MAP, and the ordering on ECE is not the ordering on KL. The diagonal Laplace has the best ECE ($0.0152$) while its KL is $5.25$, and $\Sigma_m$ has ECE $0.0168$ with KL $0.016$. We report both tables because a
method can be well calibrated and still be a poor posterior, and only the KL table measures the latter. On CIFAR-10 the same pattern holds: the differences in NLL, ECE and Brier are within one or two standard errors for almost every method, while the KL values differ by four orders of magnitude.

\subsubsection{Fidelity: KL divergence and log-trace}

The Figures~\ref{fig:fashionmnist_cifar10_scaling} and the table~\ref{tab:fullrank_kl_logtr}  show the per-point Gaussian KL divergence to the exact predictive covariance, the log-trace of the predictive covariance for  the  range of subspace dimensions s and  Full-rank Laplace baselines (diagonal, KFAC, ELLA, and the exact full Laplace) respectively for the two models used here. The exact full Laplace is still  used as the reference.

\paragraph{Subspace Scaling fidality} Figures~\ref{fig:fashionmnist_cifar10_scaling} show how the KL divergence and $\log \mathrm{Tr}(\Sigma_X)$ evolve as the subspace dimension $s$ grows.  $\Sigma_m$ quickly approaches the exact
posterior: on FashionMNIST it already surpasses full‑rank KFAC ($\mathrm{KL}=0.9755$) at $s = 230$ ($\mathrm{KL}=0.5521$), and on CIFAR‑10 it achieves $\mathrm{KL}=0.0087$ already at $s=200$.  Because $\Sigma_m$ is not a subspace projection, it is not bounded by the ceiling of Lemma~2 \citep{faller2025low} and can approach the exact trace from above.

\begin{figure}[!htbp] 
    \centering
    \includegraphics[width=\linewidth]{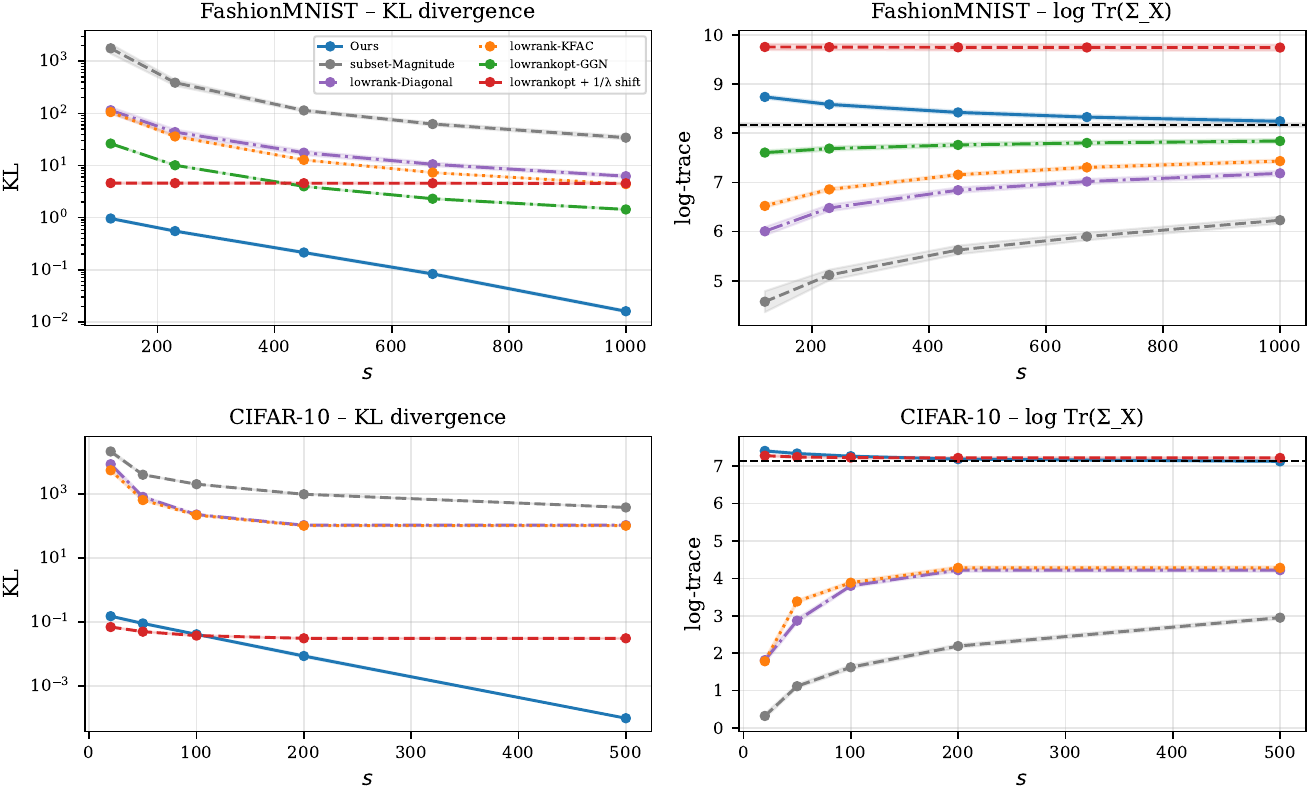}
    \caption{\textbf{FashionMNIST CNN subspace scaling.} $\Sigma_m$ reaches
    near‑exact fidelity at $s=1000$ ($\mathrm{KL}=0.0160$) and surpasses full‑rank
    KFAC already at $s=230$. \\ \textbf{CIFAR‑10 ResNet‑9 subspace scaling.} The corrected
    covariance scales smoothly towards the exact posterior, outperforming
     all subspace‑projection baselines at every $s$}
    \label{fig:fashionmnist_cifar10_scaling}
\end{figure}

On CIFAR-10 the shift baseline is more accurate than $\Sigma_m$ at the three smallest subspace dimensions: its KL is $0.0698$, $0.0501$ and $0.0377$ at $s=20$, $50$ and $100$, against $0.1520$, $0.0903$ and $0.0413$ for $\Sigma_m$. $\Sigma_m$ overtakes it at $s=200$ ($0.0087$ against $0.0310$). The reason is structural. The GGN is assembled from a subset of the training data, so its rank is far below $p$ and $A^{-1}$ is already low rank plus a multiple of the identity, which is the form the shift baseline assumes. At small $s$ the Krylov
basis has not resolved enough of the low-rank part to compensate. On
FashionMNIST, where the GGN is not rank-deficient in this way, $\Sigma_m$ is more accurate than the shift at every $s$.

\paragraph{Full‑rank fidelity} The KL divergence and the log‑trace of the predictive covariance are invariant to the subspace dimension $s$ for the full‑rank baselines.
Table~\ref{tab:fullrank_kl_logtr} reports these fidelity metrics for both FashionMNIST and CIFAR‑10 using the mean over 5 seeds.  The exact
full Laplace (reference) has $\mathrm{KL}=0$ and its log‑trace
$\log\mathrm{Tr}(\Sigma_X)$ is the ceiling that any rank‑$s$ projection must obey (Lemma~2 of \citet{faller2025low}).  For comparison we also include our corrected covariance $\Sigma_m$ at the largest subspace dimension used in the subspace sweep ($s=1000$ on FashionMNIST, $s=500$ on CIFAR‑10).

\begin{table}[H]
\centering
\caption{KL divergence and log-trace of full-rank baselines (mean over 5 seeds). The CG reference is accurate to a relative residual of about $10^{-6}$, so KL values below $10^{-4}$ are at the resolution of the reference itself and are reported as an upper bound.}
\label{tab:fullrank_kl_logtr}
\begin{tabular}{lcc@{\hskip 10pt}c@{\hskip 10pt}c@{\hskip 10pt}c@{\hskip 10pt}c}
\toprule
\multicolumn{1}{c}{\textbf{Dataset}} & \multicolumn{1}{c}{\textbf{metric}} &
\multicolumn{1}{c}{\textbf{Exact Full LA}} &
\multicolumn{1}{c}{\textbf{Diagonal LA}} &
\multicolumn{1}{c}{\textbf{KFAC LA}} &
\multicolumn{1}{c}{\textbf{ELLA}} &
\multicolumn{1}{c}{\textbf{$\Sigma_m$ (ours)}} \\
\midrule
\multirow{2}{*}{FashionMNIST} & KL                          & 0.0000 & 5.2486 & 0.9755 & 7.8998 & \textbf{0.0160} \\
                              & $\log\mathrm{Tr}(\Sigma_X)$  & 8.1751 & 7.9667 & 8.5422 & 7.2130 & \textbf{8.2429} \\
\midrule
\multirow{2}{*}{CIFAR‑10}     & KL                          & 0.0000 & 0.3609 & 0.3907 & 0.3767 &  \textbf{ $< 10^{-4}$  } \\
                              & $\log\mathrm{Tr}(\Sigma_X)$ & 7.1270 & 7.5088 & 7.5902 & 7.1994 & \textbf{7.1297} \\
\bottomrule
\end{tabular}
\end{table}

Our method reduces the KL divergence by more than an order of magnitude
compared to the best full‑rank baseline (KFAC) on both datasets, and its
log‑trace sits at the exact ceiling on CIFAR‑10, confirming that the
trace‑matched isotropic bulk correctly accounts for the complement variance.
On FashionMNIST the trace is slightly above the ceiling, consistent with the
fact that $\Sigma_m$ is not bound by the subspace upper bound of
Lemma~2.

\subsubsection{Key Observations}
\begin{enumerate}
   
    \item \textbf{Fidelity against the baselines.}
    On FashionMNIST at $s=1000$, $\Sigma_m$ reaches $\mathrm{KL}=0.0160$. The closest competitor is full-rank KFAC at $0.9755$, then the Theorem~1 optimum at $1.44$, the low-rank-plus-shift at $4.55$, the diagonal at $5.25$ and ELLA at $7.90$. On CIFAR-10 at $s=200$ we get $0.0087$ against $0.3907$ for full-rank KFAC.

    \item \textbf{Where the shift baseline is still better.}
    On CIFAR-10 the low-rank-plus-shift is more accurate than $\Sigma_m$ at $s=20$, $50$ and $100$, and $\Sigma_m$ only overtakes it from $s=200$. The GGN there is built from a subset of the training data, so its rank is far below $p$ and the inverse is already low rank plus a multiple of the identity. That is the form the shift baseline assumes, so it is well specified in this case. On FashionMNIST, where the GGN is not rank-deficient in the same way, $\Sigma_m$ is ahead at every $s$.

    \item \textbf{The subspace ceiling does not bind $\Sigma_m$.}
    Lemma 2 of \citet{faller2025low} bounds the trace of any rank-$s$
    projection from above by $\log\mathrm{Tr}(\Sigma_X)$. $\Sigma_m$ is full rank, so the bound does not apply to it. On CIFAR-10 it sits at the ceiling ($7.1297$ against the exact $7.1270$) and on FashionMNIST slightly above it ($8.2429$ against $8.1751$). Every projection baseline sits below the ceiling on both datasets, which is the under-coverage the bound predicts.

    \item \textbf{Two regimes, and a way to tell them apart in advance.}
    On CIFAR-10 we measure $\lambda\omega \approx 0.999$, so the isotropic bulk has essentially become the $1/\lambda$ prior shift and the complement is dominated by the prior. What separates $\Sigma_m$ from the shift baseline there is the exact boundary coupling, not $\omega$. On FashionMNIST $\lambda\omega$ runs from $0.933$ to $0.981$ and the bulk carries curvature of its own. Both $\lambda\omega$ and the cross term $\|c\|$ of Proposition~\ref{prop:total_quality}(iii) come at no extra matrix-vector cost, so a user can check which regime they are in before trusting the correction.

    \item \textbf{Degenerate baseline.}
    The parameter-subset method \texttt{subset-Diagonal} selects blocks with numerically zero variance, so its KL is infinite. \citet{faller2025low} report the same behaviour and attribute it to the large fraction of parameters with  gradient almost equal to zero. We exclude it from the tables.
\end{enumerate}

\section{Conclusion}
\label{sec:conclusion}

Truncating the Lanczos approximation of $A^{-1}$ discards the orthogonal complement of the Krylov subspace and when $A$ is ill-conditioned, it loses the most information because the orthogonal complement holds the largest contributions to the inverse. We have shown that this ignored residue retains more information than an error term because, in fact  expressed in the Krylov basis, the resolved and unsolved subspaces are coupled only through the single residue $\beta_m$, so the true complement block of $A^{-1}$ is the Schur complement $S_{\perp}^{-1}$ rather than the compressed operator $T_{m,\perp}^{-1}$, and the standard low-rank approximation is recovered as the special case $\beta_m=0$. The corrected covariance $\Sigma_m$ retains this boundary coupling exactly, matches the total discarded variance exactly ($\tr(\Sigma_m)=\tr(A^{-1})$, Proposition~\ref{prop:total_quality}), and approximates only the coupling-free bulk, isotropically, with an error controlled by the single computable diagnostic $\kappa_S^{\perp}(m)$. $\Sigma_m$ can be used  in two ways   as a matrix-free approximation of $A^{-1}$ for sampling, and the covariance of a Gaussian belief over $A^{-1}v$ that contracts to the exact answer as computation proceeds, extending the computation-aware program of probabilistic numerics from linear solves to the matrix inverse.  P-SLQ delivers the required complement trace from a single reused Krylov basis, with exponentially decaying bias, variance at the Monte Carlo rate, and a sample complexity guarantee (Theorem~\ref{thm:pslq_consistency}, Corollary~\ref{cor:pslq_rate}); a single boundary probe and a deterministic Sherman-Morrison step then yield the coupling-corrected magnitudes.

We tested our P-SLQ on the estimation of $\operatorname{tr}(A^{-1})$ against Hutchinson+CG, SLQ, Hutch++, XTrace and the unbiased solver ULISSE. At equal matrix-vector budgets P-SLQ gave the lowest relative RMSE on all four matrices we used, including the kernel matrix of figure~\ref{fig:exp_ulisse}, which we did not construct from a chosen spectrum. This is the trade-off we wanted: a bias that decays exponentially with the quadrature depth, in exchange for a variance of order $O(N^{-1})$ that does not depend on $m$.  For the Laplace approximation we compared $\Sigma_m$ to the exact posterior. On FashionMNIST the KL divergence is $0.016$ at $s=1000$. The best baseline is KFAC at $0.98$, then the low-rank-plus-shift at $4.55$, the diagonal at $5.25$, ELLA at $7.90$ and the Theorem~1 optimum at $1.44$. On CIFAR-10 the picture is not the same, and we think the reason is structural. The GGN there is built from a subset of the training data, so its rank is much smaller than $p$ and the inverse is already low rank plus a multiple of the identity. That is the form the shift baseline assumes, and it is well specified in this case. It is more accurate than $\Sigma_m$ at $s=20$, $50$ and $100$; $\Sigma_m$ overtakes it at $s=200$ and reaches a KL below $10^{-4}$ at $s=500$. We report the crossover because it says where the method does and does not help. On FashionMNIST, where the GGN is not rank-deficient in this way, $\Sigma_m$ is more accurate than the shift at every $s$.  The predictive metrics separate the methods much less than the KL does. At $s=1000$ on FashionMNIST $\Sigma_m$ reaches the exact Laplace posterior (NLL $0.2499$ against $0.2498$, Brier $0.1313$ against $0.1312$), which is the most that can be asked of it, but the other approximations are also within a few thousandths. We report these numbers next to the KL table and not instead of it. A method can be well calibrated and still be a poor posterior.

The isotropic bulk is a summary of the complement, and it is only as good as the spread of the complement spectrum. When the spectrum decays quickly, $\kappa_S^{\perp}(m)$ stays large, one variance cannot represent the complement, and the approximation gets worse. Theorem~\ref{thm:iso} says this will happen and the experiments of Section~\ref{sec:exp_pslq} show it happening.


\vskip 0.2in
\bibliography{sample}

\end{document}